\newcommand{\ignorecontentsline}[3]{}
\let\trueaddcontentsline=\addcontentsline
\newtheorem{thm}{Theorem}[section]
\newtheorem{cor}[thm]{Corollary}
\newtheorem{lem}[thm]{Lemma}
\newtheorem{prop}[thm]{Proposition}
\theoremstyle{definition}
\newtheorem{defn}[thm]{Definition}
\theoremstyle{remark}
\newtheorem{rem}[thm]{Remark}
\newtheorem{warn}[thm]{Warning}
\newtheorem{ex}[thm]{Example}
\newtheorem{que}[thm]{Question}
\newcommand{\eps}{\varepsilon}
\newcommand{\g}{\mathfrak{g}}
\newcommand{\N}{\mathbf{N}}
\newcommand{\Z}{\mathbf{Z}}
\newcommand{\R}{\mathbf{R}}
\newcommand{\Q}{\mathbf{Q}}
\newcommand{\C}{\mathbf{C}}
\begin{document}

\address{CNRS and Univ Lyon, Univ Claude Bernard Lyon 1, Institut Camille Jordan, 43 blvd. du 11 novembre 1918, F-69622 Villeurbanne}
\email{cornulier@math.univ-lyon1.fr}

%IRMAR, Campus de Beaulieu, 35042 Rennes CEDEX, France}
%\email{yves.cornulier@math.u-psud.fr}
\subjclass[2000]{Primary 17B30; Secondary 17B70, 20F18, 20F67, 20F69, 22E25, 22E40}

%17B Lie algebras ans superalgebras
%17B30  Solvable, nilpotent (super)algebras
%17B70   Graded Lie (super)algebras
%20F  Special aspects of infinite or finite groups
%20F18   Nilpotent groups
%20F65  Geometric group theory
%20F67  	Hyperbolic groups and nonpositively curved groups
%20F69   Asymptotic properties of groups
%20E07   Subgroup theorems; subgroup growth
%	22E40  	Discrete subgroups of Lie groups
%	22E25  	Nilpotent and solvable Lie groups

\title[On sublinear bilipschitz equivalence of groups]{On sublinear bilipschitz equivalence of groups}
%(Sur l'\'equivalence sous-lin\'eairement bilipschitzienne des groupes)
\author{Yves Cornulier}%
%\date{February 21, 2017}%\today}
\date{March 20, 2018}%\today}
\thanks{Partially supported by ANR Project ANR-14-CE25-0004 GAMME}

% ----------------------------------------------------------------
\begin{abstract}
We discuss the notion of sublinearly bilipschitz equivalences (SBE), which generalize quasi-isometries, allowing some additional terms that behave sublinearly with respect to the distance from the origin. Such maps were originally motivated by the fact they induce bilipschitz homeomorphisms between asymptotic cones. We prove here that for hyperbolic groups, they also induce H\"older homeomorphisms between the boundaries. This yields many basic examples of hyperbolic groups that are pairwise non-SBE. Besides, we check that subexponential growth is an SBE-invariant.

The central part of the paper addresses nilpotent groups. While classification up to sublinearly bilipschitz equivalence is known in this case as a consequence of Pansu's theorems, its quantitative version is not. We introduce a computable algebraic invariant $e=e_G<1$ for every such group $G$, and check that $G$ is $O(r^e)$-bilipschitz equivalent to its associated Carnot group. Here $r\mapsto r^e$ is a quantitive sublinear bound.

Finally, we define the notion of large-scale contractable and large-scale homothetic metric spaces. We check that these notions imply polynomial growth under general hypotheses, and formulate conjectures about groups with these properties.

%\newabstract
%\medskip
%On étudie les équivalences sous-linéairement bilipschitziennes (SBE), qui généralisent les quasi-isométries, en autorisant un terme d'erreur sous-linéaire par rapport à la distance à l'origine. L'introduction de telles applications a été initialement motivée par le fait qu'elles induisent des homéo\-morphismes bilipschitziens au niveau des cônes asymptotiques. On démontre ici que pour les groupes hyperboliques, elles induisent également des homéo\-morphismes hölderiens entre leurs bords de Gromov. Ceci permet d'obtenir de nombreux exemples de groupes hyperboliques qui ne sont pas SBE entre eux. En outre, on vérifie qu'être à croissance sous-exponentielle est invariant par SBE.

%La partie centrale de l'article concerne les groupes nilpotents. Leur classification à SBE près se déduit des travaux de Pansu des années 80, mais la version quantitative reste à étudier. On introduit un invariant algébrique calculable $e=e_G<1$ pour les groupes nilpotents $G$ et on vérifie que $G$ est toujours $O(r^e)$-SBE à son groupe Carnot-gradué associé: la fonction $r\mapsto r^e$ est une borne sous-linéaire quantitative.

%Enfin, on introduit les notion d'espaces métriques contractables à grande échelle, et homothétique à grande échelle. On vérifie, sous des hypothèses très générales, qu'elles impliquent être à croissance polynomiale, et on formule des conjectures sur les groupes ayant ces propriétés.
\end{abstract}

%\end{abstract}

\maketitle
% ----------------------------------------------------------------
%%%%%%%%%%%%%%%%%%%%%%%%%%%%%%%%%%%%%%%%%%%%%%%%%%%%%%%%%%%%%%%%%%%
\medskip

\renewcommand{\thesubsection}{{\thesection.\Alph{subsection}}} 

%%%%%%%%%%%%%%%%%%%%%%%%%%%%%%%%%%%%%%%%%%%%%%%%%%%%%%%%%%%%%%%%%%%%%
\let\addcontentsline=\ignorecontentsline
%Note: ceci permet de ne pas inclure de ligne relative à l'introduction dans la table des matières (située en fin d'introduction)
\section{Introduction}

\subsection{Sublinearly Lipschitz maps}

We consider here some functions between metric spaces, generalizing large-scale Lipschitz maps and quasi-isometries. 

Let $v$ be a real-valued function on $\R_+$. For our purposes, typical examples of $v$ are $v(r)=r^\alpha$ for some $\alpha\in [0,1]$, or $v(r)=\log(r)$. See \S\ref{s_sbm} for precise (mild) hypotheses. We assume here $v\ge 1$, up to replacing $v$ with $\max(v,1)$ if necessary. 

We say that a map $f:X\to Y$ between metric spaces is $O(v)$-Lipschitz if it satisfies
\[d(f(x),f(x'))\le Cd(x,x')+C'v(|x|+|x'|),\qquad\forall x,x'\in X,\]
for some constants $C,C'>0$. Here, $|x|$ denotes the distance from $x$ to some base-point of $X$ (fixed once and for all, but whose choice does not matter). We also say that $f$ is $o(v)$-Lipschitz if it is $O(v')$-Lipschitz for some $v'=o(v)$; in particular, for $v(r)=r$, we call $o(r)$-Lipschitz maps sublinearly Lipschitz maps; they were introduced in \cite{CoI} under the name ``cone-Lipschitz maps".

For instance, $O(1)$-Lipschitz maps are known as large-scale Lipschitz maps, and occur naturally in the large-scale category, whose isomorphisms are quasi-isometries; see for instance \cite[Chap.\ 3]{CH}. 

We are especially interested in $O(v)$-Lipschitz maps when $v(r)=o(r)$ (that is, sublinearly Lipschitz maps). Indeed, for instance $O(r)$-Lipschitz maps are just maps with a radial control $|f(x)|\le C|x|+C'$ and are thus of limited interest.

There is a natural equivalence relation in the set of $O(v)$-Lipschitz maps $X\to Y$, called $O(v)$-closeness. Namely, $f,f'$ are $O(v)$-close if $d(f(x),f'(x))\le C''v(|x|)$ for all $x\in X$, and some $C''>0$. Similarly, $f,f'$ are $o(v)$-close if they are $O(v')$-close for some $v'=o(v)$. 

The sublinearly Lipschitz category, consists in metric spaces as objects, with the set of arrows $X\to Y$ being the set of $o(r)$-Lipschitz maps up to $o(r)$-equivalence. This category was introduced in \cite{CoI} with the following motivation: taking asymptotic cones (with respect to a given scaling sequence and ultrafilter) yields a functor from this category to the category of metric spaces with Lipschitz maps. Moreover, this is, in a precise sense \cite[Prop.\ 2.9]{CoI}, the largest setting for which such functors can be defined. Isomorphisms in this category are called sublinearly Lipschitz equivalences, or SBE maps (``cone-bilipschitz equivalences" in \cite{CoI}). A simple verification shows that a map $f:X\to Y$ is an SBE if and only there exists a (locally bounded above) function $v=o(r)$ such that it satisfies the following three conditions:
\begin{itemize}
\item $f$ is $O(v)$-Lipschitz: there exist constants $c,C>0$ 
such that for all $x,x'\in X$ one has
\[ d(f(x),f(x'))\le cd(x,x')+Cv(|x|+|x'|);\]
\item $f$ is $O(v)$-expansive: there exist constants $c',C'>0$ such that for all $x,x'\in X$ one has
\[c'd(x,x')-C'v(|x|+|x'|)\le d(f(x),f(x'));\]
\item $f$ is $O(v)$-surjective: there exist a constant $C''>0$ such that, for all $y\in Y$, one has
\[d(y,f(X))\le C''v(|y|).\]
\end{itemize}

Note that multiplying $v$ by a scalar (depending on $f$) allows to get rid of the constants $C,C',C''$.

\begin{ex}
1) If $\pi$ is any sublinear function $\R\to\R$, then $x\mapsto x+\pi(x)$ is an SBE $\R\to\R$ (being $o(r)$-close to the identity map).

2) Let $X$ be the set of square integers in $\mathbf{R}_+$ and let $f$ be the map $x\mapsto \lfloor\sqrt{x}\rfloor^2$ from $\mathbf{R}_+$ to $X$. Then $f$ is an SBE, but is not $o(r)$-close to any large-scale Lipschitz map: indeed by a simple geodesic argument, every large-scale Lipschitz map $\mathbf{R}_+\to X$ is bounded and in particular $X$ and $\mathbf{R}_+$ are not quasi-isometric.
\end{ex}

There are more elaborate examples, which are important in geometric group theory. For simply connected nilpotent Lie groups, the classification up to quasi-isometry is conjectural: it is expected that quasi-isometric implies isomorphic. Nevertheless, Pansu obtained the first non-trivial quasi-isometric rigidity results by proving important theorems about their asymptotic cones in the eighties, which can be restated in terms of SBEs. 
Namely every simply connected nilpotent Lie group is SBE to a Carnot group (called its associated Carnot-graded group) \cite{Pan}, and any two SBE Carnot groups are isomorphic \cite{Pan2}. Later, Shalom \cite{sh04} proved, with unrelated methods, that the Betti numbers of the Lie algebra of a simply connected nilpotent Lie group are quasi-isometry invariants, while taking associated Carnot-graded group does not always preserves the Betti numbers. Thus there exist simply connected nilpotent Lie groups (with lattices) that are SBE but not quasi-isometric. SBEs allow to restate Pansu's theorems with no reference to asymptotic cones (the asymptotic cone theorems, also related to Goodman's earlier work \cite{Goo}, being corollaries), but also yields other interpretations. We come back to this topic in \S\ref{nigr}.

One can naturally generalize the sublinearly Lipschitz category and define, in a similar way, the $O(v)$-Lipschitz category and the $o(v)$-Lipschitz category. In particular, the $O(1)$-Lipschitz category is known as the large-scale Lipschitz category. Thus these interpolate between the large-scale category and the sublinearly Lipschitz category. There are obvious inclusion functors from the $O(v)$-category to the $O(v')$-category whenever $v=O(v')$ (this is usually not faithful, because of the equivalence relation). Isomorphisms in the $O(v)$-Lipschitz or $o(v)$-Lipschitz category are called $O(v)$-SBE or $o(v)$-SBE (assuming that $v=o(r)$). 

For instance, it is established in \cite{CoI} that every connected Lie group is $O(\log r)$-SBE to a Lie group of the form $G=N\rtimes E$ with both $N,E$ simply connected nilpotent Lie groups, $N$ being exponentially distorted in $G$ and $E$ acting in a diagonalizable way on the Lie algebra of $N$.

\subsection{SBEs and growth}

In \S\ref{growth_ame}, we prove the following:

\begin{thm}
1) Subexponential growth is SBE-invariant among connected graphs of bounded valency, and in particular among compactly generated locally compact groups.

2) (Folklore up to the formulation) Polynomial growth is SBE-invariant among compactly generated locally compact groups.
\end{thm}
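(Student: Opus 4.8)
The plan is to prove both parts by comparing the growth functions $\beta_X(R)=|B_X(R)|$ and $\beta_Y(R)=|B_Y(R)|$ through an SBE $f\colon X\to Y$ with sublinear function $v=o(r)$, exploiting that on the ball of radius $R$ the map distorts distances multiplicatively up to the additive sublinear error $v(R)$. For part 2 I would first reduce the group case to a graph: a compactly generated locally compact group is quasi-isometric, hence $O(1)$-SBE and so $O(v)$-SBE, to a bounded-valency Cayley--Abels graph on which it acts properly cocompactly, the Haar-growth matching the vertex-growth up to constants; since SBEs compose, it suffices to treat vertex-transitive (or quasi-transitive) graphs, for which balls of equal radius have comparable cardinality.

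For part 1, fix $R$ and, for each vertex $y\in B_Y(R)$, use $O(v)$-surjectivity to pick $x_y\in X$ with $d(f(x_y),y)\le C''v(R)$. The $O(v)$-expansiveness bound together with $v(t)=o(t)$ forces $|x_y|\le C_1R$ for all large $R$: indeed $c'|x_y|\le d(f(x_y),f(x_0))+C'v(|x_y|)+O(1)\le R+C'v(|x_y|)+O(v(R))$, and the term $C'v(|x_y|)$ is absorbed into $\tfrac{c'}{2}|x_y|$. Thus $y\mapsto x_y$ maps $B_Y(R)$ into $B_X(C_1R)$, and its fibres are small: if $x_y=x_{y'}$ then $d(y,y')\le 2C''v(R)$, so in a graph of valency $\le k$ each fibre has at most $k^{2C''v(R)+1}$ vertices. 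Hence
\[\beta_Y(R)\le \beta_X(C_1R)\,k^{2C''v(R)+1}.\]
Taking logarithms and dividing by $R$, the right-hand side tends to $0$ because $\beta_X$ is subexponential and $v(R)=o(R)$; this gives the subexponentiality of $\beta_Y$.

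For part 2 the fibre bound $k^{O(v(R))}$ is only subexponential, so I replace the crude count by a scale-sensitive packing argument. Choose a maximal $\lambda v(R)$-separated net $N\subset B_Y(R)$ with $\lambda$ large. For $y\ne y'$ in $N$ one has $d(f(x_y),f(x_{y'}))\ge(\lambda-2C'')v(R)$, and the $O(v)$-Lipschitz bound, using the mild submultiplicativity $v(2C_1R)\le C_3v(R)$ from \S\ref{s_sbm}, yields $d(x_y,x_{y'})\ge\mu v(R)$ for a suitable $\mu>0$; so $\{x_y\}$ is a $\mu v(R)$-separated subset of $B_X(C_1R)$. Comparing with the standard packing estimates and using homogeneity (equal ball volumes at each scale) gives
\[\frac{\beta_Y(R)}{\beta_Y(\lambda v(R))}\le |N|\le \frac{\beta_X(2C_1R)}{\beta_X(\mu v(R)/6)}.\]
Feeding in $\beta_X(s)\asymp s^{d}$, valid since $X$ is a group of polynomial growth of some degree $d$, turns this into the self-improving inequality $\beta_Y(R)\le C\,\beta_Y(\lambda v(R))\,(R/v(R))^{d}$. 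I would finish by iterating it: writing $R_{n+1}=\lambda v(R_n)$, the scales $R_n$ reach a bounded value after $O(\log\log R)$ steps, and summing the logarithmic inequality along the orbit gives $\log\beta_Y(R)\le d\log R+O(\log\log R)$, that is $\beta_Y(R)\le R^{d}(\log R)^{O(1)}$, which is polynomial.

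The main obstacle is exactly this last point in part 2: because an SBE has fibres whose size grows like $k^{O(v(R))}$, unlike a genuine quasi-isometry whose fibres are uniformly bounded, the naive comparison preserves only subexponential growth, and one must both pass to a packing argument at the critical scale $v(R)$ and bootstrap the resulting self-referential inequality to absorb the accumulated sublinear errors; this step relies essentially on homogeneity and on the doubling built into polynomial growth.
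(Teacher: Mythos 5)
Your part~1 is correct and is essentially the paper's own argument (Proposition~\ref{sub_sbe}): everything rests on the observation that an SBE has fibres of diameter $O(v(R))$ over the $R$-ball, hence of cardinality $\exp(O(v(R)))$ in a bounded-valency graph, a subexponential factor; whether one pushes $B_X(R)$ forward by $f$ and counts fibres, as the paper does, or picks coarse preimages of the points of $B_Y(R)$ as you do, is immaterial.

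Part~2 is where you genuinely diverge from the paper. The paper characterizes polynomial growth of a compactly generated locally compact group as large-scale doubling of the pointed metric space, shows that large-scale doubling is equivalent to properness of all asymptotic cones at the basepoint, and concludes because an SBE induces bilipschitz homeomorphisms between asymptotic cones; no counting at any intermediate scale occurs. Your route avoids asymptotic cones entirely and works by packing at the critical scale $v(R)$ plus a bootstrapped inequality, which is more hands-on and quantitative. Note, however, that you consume the same deep input as the paper: the two-sided estimate $\beta_X(s)\asymp s^d$ (or, what would serve equally well, doubling of $\beta_X$) is \emph{not} a formal consequence of a polynomial upper bound on growth, but of the structure theory of groups of polynomial growth (Gromov, Losert in the locally compact case, Guivarc'h for the exact degree). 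This is unavoidable: by the paper's Example~\ref{counter_tre}, polynomial growth is not SBE-invariant among bounded-valency graphs, so some group-theoretic input must enter; with only $\beta_X(s)=O(s^D)$ and the trivial lower bound $\beta_X(s)\ge s$, your net estimate degrades to $|N|\le CR^D/v(R)$ and the iteration only yields $\log\beta_Y(R)=O((\log R)^2)$, which is not polynomial.

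There is also one quantitative slip in your iteration, though it does not affect the qualitative conclusion. The recursion $R_{n+1}=\lambda v(R_n)$ reaches a bounded scale in $O(\log\log R)$ steps only for functions of power type $v(r)=r^\alpha$, $\alpha<1$; for a general sublinear $v$, say $v(r)=r/\log r$, one only gets $R_{n+1}\le R_n/2$ above a fixed threshold, hence $N=O(\log R)$ steps. Your inequality still telescopes: since $R_n/v(R_n)=\lambda R_n/R_{n+1}$, summing the logarithm of the self-improving inequality along the orbit gives
\[
\log\beta_Y(R)\;\le\; d\log R \,+\, N\bigl(\log C + d\log\lambda\bigr)+O(1)\;\le\;\bigl(d+O(1)\bigr)\log R,
\]
so $\beta_Y(R)\le R^{d+O(1)}$. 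That is polynomial, which is all part~2 requires, but the bound $R^{d}(\log R)^{O(1)}$ you claim is not correct for general admissible $v$.
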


In spite of their similarity, these results are very different in nature. The first, about subexponential growth, is very general and the proof is direct. The second is not true in the graph setting (it is very easy to check that $\Z$ is SBE to a connected graph of valency $\le 3$ and growth greater than any polynomial. However, for groups, polynomial growth can be characterized by a large-scale doubling property and this gives the result. Note that this is equivalent to the result that groups polynomial growth are precisely those with all their asymptotic cones proper; a fact which is known to experts.

It would be natural to wonder about SBE-invariance of growth and amenability among groups; I have included some open questions in \S\ref{growth_ame}.

\subsection{SBE and ends}\label{i_sbends}

Given a geodesic metric space $X$, there is a natural way to define its end space $E(X)$, using discrete paths going to infinity (in the metric sense); see \S\ref{ss_ends}. When $X$ is proper (or quasi-isometric to a proper space), $E(X)$ is a compact space.

We say that $f:X\to Y$ is radially expansive if there exist constants $C,C'>0$ such that $|f(x)|\ge C|x|-C'$ for all $x$ (this does not depend on the choice of base-points; if $X=\emptyset$ we agree that $f$ is radially expansive). 

\begin{thm}\label{ends_func}
Every radially expansive sublinearly lipschitz map $f:X\to Y$ between connected graphs naturally induces a H\"older continuous map $f_*:E(X)\to E(Y)$. This assignment is functorial from the subcategory of the sublinearly Lipschitz category, where objects are connected graphs and maps are radially expansive maps up to sublinear closeness, to the category of metric spaces and H\"older continuous maps.
\end{thm}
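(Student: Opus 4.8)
The plan is to build $f_*$ directly from the description of ends in \S\ref{ss_ends} as equivalence classes of discrete paths tending to infinity, and to extract the H\"older exponent from the interplay between radial expansiveness and the sublinear bound $v=o(r)$. First I would fix an end $\xi$ of $X$ represented by a discrete path $(x_n)$ with $|x_n|\to\infty$. Since $f$ is radially expansive, $|f(x_n)|\ge C|x_n|-C'\to\infty$. The successive images satisfy $d(f(x_n),f(x_{n+1}))\le c\,d(x_n,x_{n+1})+Cv(|x_n|+|x_{n+1}|)$, a bounded-plus-sublinear quantity, so joining $f(x_n)$ to $f(x_{n+1})$ by a geodesic edge-path of $Y$ produces a genuine discrete path $\hat f$ in $Y$ that goes to infinity, with jumps controlled sublinearly in the radius. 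This path represents a candidate end $f_*\xi$, and the bulk of the work is to show it is well defined and H\"older.

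The heart of the argument is a single quantitative lemma: \emph{if two vertices of $X$ lie in the same connected component of $X\setminus B(o,R)$, then their images lie in the same component of $Y\setminus B(o,\alpha R-\beta)$}, for constants $\alpha,\beta>0$ independent of $R$ and valid for $R$ large. To prove it, take a connecting path $\gamma$ in $X$ avoiding $B(o,R)$; for each vertex $z\in\gamma$ one has $|z|=\rho\ge R$, hence $|f(z)|\ge C\rho-C'$, while any point $w$ on the interpolating geodesic out of $f(z)$ satisfies $|w|\ge |f(z)|-\big(c+Cv(2\rho)\big)\ge C\rho-C'-c-Cv(2\rho)$. Because $v(2\rho)/\rho\to 0$, there is $\rho_0$ with $Cv(2\rho)\le (C/2)\rho$ for $\rho\ge\rho_0$, so for $R\ge\rho_0$ the whole image path stays outside $B\big(o,(C/2)R-C'-c\big)$ uniformly in $\rho\ge R$. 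The delicate point—and the main obstacle—is precisely this uniformity: the sublinear error $v$ is evaluated at the possibly large radius $\rho$ of points along $\gamma$, not at $R$, so one must use that $\rho\mapsto C\rho-Cv(2\rho)$ is eventually $\ge (C/2)\rho$ to keep the image outside a ball of radius comparable to $R$ simultaneously along the entire path. Applying this to any two representatives of the same end shows their images are eventually in a common component, so $f_*\xi$ is a well-defined end, and applied to representatives of two distinct ends $\xi,\eta$ it yields $N_Y(f_*\xi,f_*\eta)\ge \alpha N_X(\xi,\eta)-\beta$, where $N(\cdot,\cdot)$ denotes the largest radius up to which two ends stay in a common component.

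With the metric $\rho=e^{-N}$ on end spaces from \S\ref{ss_ends}, the inequality $N_Y(f_*\xi,f_*\eta)\ge\alpha N_X(\xi,\eta)-\beta$ translates directly into
\[\rho_Y(f_*\xi,f_*\eta)=e^{-N_Y(f_*\xi,f_*\eta)}\le e^{\beta}\big(e^{-N_X(\xi,\eta)}\big)^{\alpha}=e^{\beta}\,\rho_X(\xi,\eta)^{\alpha},\]
which is H\"older continuity with exponent $\alpha$ and constant $e^{\beta}$; the finitely many small values of $R<\rho_0$ are harmless since they only affect a bounded range of $N$ and can be absorbed into the constant.

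It remains to verify functoriality. For independence of the sublinear-closeness class, if $d(f(x),f'(x))\le C''v(|x|)$ then along any representing path $v(|x_n|)=o(|x_n|)=o(|f(x_n)|)$ by radial expansiveness, so $f(x_n)$ and $f'(x_n)$ remain within $o(\text{radius})$ of each other and hence in the same component after deleting large balls, giving $f_*=f'_*$. For composition, $g\circ f$ is again radially expansive and sublinearly Lipschitz (using the mild closure properties of $v$ recorded in \S\ref{s_sbm}), and chasing a representing path $(x_n)$ shows that $g(f(x_n))$ simultaneously represents $g_*(f_*\xi)$ and $(g\circ f)_*\xi$, whence $(g\circ f)_*=g_*\circ f_*$; that the identity induces the identity is immediate. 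This establishes the asserted functor to the category of metric spaces and H\"older maps.
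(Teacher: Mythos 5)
Your proposal is correct and follows essentially the same route as the paper: push a discretized proper path forward under $f$, interpolate by geodesics in $Y$, and prove the key estimate that a connecting path avoiding the $R$-ball in $X$ maps to a path avoiding a ball of radius comparable to $R$ in $Y$, which translates through the ultrametric on ends (your $e^{-N}$, the paper's $2^{-\phi}$) into H\"older continuity. If anything, your write-up is more careful than the paper's displayed inequalities at the one delicate point you flag (the sublinear error is evaluated at the radius $\rho\ge R$ of points along the connecting path, so one must use sublinearity of $v$ at $\rho$ itself rather than bound it by $v(4R+1)$), and you also spell out the sublinear-closeness and composition checks for functoriality that the paper leaves implicit.
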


In the subcategory described in the theorem, it is clear that the isomorphisms are the SBEs. Therefore we obtain:

\begin{cor}\label{c_SBE_end}
Any SBE $X\to Y$ between connected graphs induces a homeomorphism $E(X)\to E(Y)$, which is H\"older. 
\end{cor}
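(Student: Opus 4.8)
The plan is to deduce the corollary formally from Theorem~\ref{ends_func}, using the elementary fact that a functor carries isomorphisms to isomorphisms. As recalled just before the statement, the isomorphisms of the sublinearly Lipschitz category are exactly the SBEs; so the first step is to check that an SBE between connected graphs is also an isomorphism of the \emph{subcategory} appearing in Theorem~\ref{ends_func}, whose objects are connected graphs and whose arrows are radially expansive sublinearly Lipschitz maps taken up to sublinear closeness. Once this is established, applying the functor of Theorem~\ref{ends_func} yields the conclusion immediately.

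The only point needing verification is that an SBE, together with its quasi-inverse, is radially expansive. Let $f:X\to Y$ be an SBE, with $O(v)$-expansiveness as in the characterization above and $v=o(r)$. Writing $o$ for the base-point of $X$ and taking $x'=o$ in the inequality $c'd(x,x')-C'v(|x|+|x'|)\le d(f(x),f(x'))$, we obtain $|f(x)|\ge c'|x|-C'v(|x|+|o|)-|f(o)|$. Since $v=o(r)$, for $|x|$ large the term $C'v(|x|+|o|)$ is at most $\tfrac{c'}{2}|x|$, whence $|f(x)|\ge \tfrac{c'}{2}|x|-C''$ for a suitable constant $C''$; thus $f$ is radially expansive. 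The quasi-inverse $g$ of an SBE is itself an SBE (the three defining conditions are symmetric in the two maps), so the same estimate shows $g$ is radially expansive as well. Hence both $f$ and $g$ are arrows of the subcategory, and since $g\circ f$ and $f\circ g$ are sublinearly close to $\mathrm{id}_X$ and $\mathrm{id}_Y$ respectively, the class of $f$ is invertible in the subcategory, with inverse the class of $g$.

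Finally I would invoke functoriality. The functor of Theorem~\ref{ends_func} sends $f$ to the Hölder map $f_*$ and $g$ to the Hölder map $g_*$, and, because it sends the identity arrow (the class of maps sublinearly close to the identity) to the identity map on ends, we get $g_*\circ f_*=(g\circ f)_*=\mathrm{id}_{E(X)}$ and $f_*\circ g_*=\mathrm{id}_{E(Y)}$. Thus $f_*$ is a bijection, Hölder (hence continuous) with Hölder (hence continuous) inverse $g_*$; that is, $f_*$ is a Hölder homeomorphism $E(X)\to E(Y)$, which is exactly the claim. The substantive content—Hölder continuity and functoriality—is already packaged in Theorem~\ref{ends_func}, so the only genuine obstacle here is mild: the base-point bookkeeping in the radial-expansiveness estimate, together with the observation that the SBE conditions are symmetric, so that the quasi-inverse lies in the same subcategory.
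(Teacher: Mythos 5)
Your proposal is correct and follows the same route as the paper, which deduces the corollary from Theorem~\ref{ends_func} by observing that in the subcategory of radially expansive sublinearly Lipschitz maps the isomorphisms are exactly the SBEs, so that functoriality immediately yields a H\"older homeomorphism. Your verification that an SBE and its quasi-inverse are radially expansive is precisely the detail the paper leaves implicit in the phrase ``it is clear that the isomorphisms are the SBEs.''
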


\begin{ex}\label{free_surface}
A non-abelian free group $F$ and a surface group $S$ (of a surface with negative Euler characteristic) are not SBE, although they have isometric asymptotic cones. Indeed, $E(F)$ is a Cantor set while $E(S)$ is a singleton.
\end{ex}

\subsection{SBE of hyperbolic groups}\label{intro_hyp}

If $X$ is a geodesic metric space, then $X$ is Gromov-hyperbolic if and only if all its asymptotic cones are real trees, for every choice of sequence of base-points; this is due to Gromov \cite[\S 2.A]{Gro}, with a more complete proof given by Drutu \cite[Prop.\ 3.4.1]{Dru}. Under sufficient homogeneity assumptions, this can be reduced to asymptotic cones with a fixed base-point, and thus this implies (see Proposition \ref{hyp_sbe_inv}):

\begin{prop}Gromov-hyperbolicity is an SBE-invariant among compactly generated locally compact groups (and in particular among finitely generated groups).\end{prop}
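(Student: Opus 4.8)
The plan is to combine the cited characterization of hyperbolicity via asymptotic cones with the fact, recalled in the introduction, that SBEs induce bilipschitz homeomorphisms between asymptotic cones, and then to observe that being a real tree is a bilipschitz-invariant property among geodesic spaces. Throughout, let $G,H$ be compactly generated locally compact groups, each equipped with a word metric (or the metric of a Cayley--Abels graph), and suppose $f\colon G\to H$ is an SBE. Since the SBE relation is symmetric, it suffices to prove that if $G$ is hyperbolic then so is $H$.

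First I would reduce the hyperbolicity criterion to fixed base-points. Because $G$ acts on itself cocompactly by isometries, its asymptotic cones are, up to isometry, independent of the choice of base-point sequence, and likewise for $H$. Hence, by the Gromov--Drutu characterization recalled above together with this homogeneity, $G$ (resp.\ $H$) is hyperbolic if and only if every asymptotic cone taken with base-point at the identity, over all scaling sequences and all non-principal ultrafilters $\omega$, is a real tree. Next I would invoke functoriality: for each such choice, the SBE $f$ induces a bilipschitz homeomorphism $f_\omega\colon \mathrm{Cone}_\omega(G)\to\mathrm{Cone}_\omega(H)$ between the corresponding identity-based cones. Since asymptotic cones of geodesic spaces are complete geodesic spaces, both $\mathrm{Cone}_\omega(G)$ and $\mathrm{Cone}_\omega(H)$ are in particular geodesic.

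The key point, and the one I expect to require the most care, is that the real-tree property transfers across $f_\omega$. A complete geodesic metric space is a real tree if and only if it is uniquely arcwise connected, equivalently if and only if it contains no subset homeomorphic to a circle. This condition is purely topological, hence preserved by the homeomorphism $f_\omega$; the bilipschitz control is not even needed here, only that $f_\omega$ is a homeomorphism. Consequently, if $\mathrm{Cone}_\omega(G)$ is a real tree, then $\mathrm{Cone}_\omega(H)$, being a geodesic space homeomorphic to a real tree, is itself a real tree.

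Ranging over all scaling sequences and all ultrafilters, I conclude that every identity-based asymptotic cone of $H$ is a real tree, and the homogeneity reduction of the first step then gives that $H$ is hyperbolic. The main subtlety to be dispatched carefully is the topological characterization of real trees among complete geodesic spaces invoked in the third step, since ``real tree'' is a priori a metric notion; once one knows it is detected by the absence of embedded circles, its invariance under the homeomorphisms $f_\omega$ is immediate, and the rest is bookkeeping over cones.
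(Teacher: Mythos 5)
Your proposal is correct and takes essentially the same route as the paper: the paper's own justification (the sketch in the introduction feeding into Proposition \ref{hyp_sbe_inv}, stated with no further proof) is precisely the combination of the Gromov--Dru\c tu characterization of hyperbolicity by all asymptotic cones being real trees, the homogeneity reduction to fixed base-point cones, and the fact that an SBE induces bilipschitz homeomorphisms between based cones. The only step the paper leaves implicit---that a geodesic space homeomorphic to a real tree is itself a real tree, via unique arcwise connectedness---is the one you spell out, and it is a standard fact.
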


In the setting of connected graphs (which is more general, since we treat these groups using their Cayley graphs with respect to a compact generating subset), easy counterexamples show that this does not hold without homogeneity assumptions (Example \ref{sbe_noninv}).

In contrast to nilpotent groups, hyperbolic groups essentially all share the same asymptotic cone and thus the latter is of no further help in the SBE classification. Example \ref{free_surface}, based on Corollary \ref{c_SBE_end} shows that, however, the SBE-classification of non-elementary hyperbolic groups is non-trivial. Still, we cannot expect much more from Corollary \ref{c_SBE_end} since the space of ends of a non-elementary group is either a point or a Cantor set. Here we refine Corollary \ref{c_SBE_end} to show that the whole boundary is an SBE-invariant.

\begin{thm}\label{i_hyp_bou}
Let $f:X\to Y$ be a radially expansive (see \S\ref{i_sbends}) sublinearly Lipschitz map between proper geodesic Gromov-hyperbolic spaces. 
Then, $f$ induces a H\"older map $(\partial X,d_\mu)\to (\partial Y,d_\mu)$ between the visual boundaries. In particular, if $f$ is an SBE, then it induces a H\"older homeomorphism between visual boundaries.
\end{thm}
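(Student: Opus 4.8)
The plan is to argue entirely through Gromov products and the visual metric, reducing the H\"older statement to one comparison inequality and then establishing that inequality by a sublinear fellow-traveling estimate. Fix basepoints $o\in X$ and $o'=f(o)\in Y$, and recall that the visual metric satisfies $d_\mu(\xi,\eta)\asymp\mu^{-(\xi|\eta)_{o}}$ for $\mu>1$ close to $1$, where $(\cdot|\cdot)$ denotes the Gromov product extended to the boundary. A boundary map $g$ is then H\"older of exponent $\alpha$ precisely when $(g\xi|g\eta)_{o'}\ge\alpha(\xi|\eta)_{o}-\mathrm{const}$, so I will aim at the comparison
\[(f_*\xi|f_*\eta)_{o'}\ \ge\ C_1\,(\xi|\eta)_{o}\ -\ \Psi\bigl((\xi|\eta)_{o}\bigr)\ -\ \mathrm{const},\]
where $C_1$ is the radial-expansivity constant and $\Psi=o(\mathrm{id})$ is assembled from $v$ and the hyperbolicity constants. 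Since $\Psi$ is sublinear, for every $\alpha<C_1$ the right-hand side dominates $\alpha(\xi|\eta)_o-\mathrm{const}'$ once $(\xi|\eta)_o$ is large and stays bounded below on the compact regime of small products; thus the comparison yields H\"older continuity with any exponent below $C_1$ (after normalizing $\mu$).

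I must first check that $f_*$ is defined. Given a geodesic ray $\gamma$ from $o$ to $\xi$, radial expansivity gives $|f(\gamma(s))|\ge C_1 s-C_2\to\infty$, while the $O(v)$-Lipschitz bound controls $d(f(\gamma(s)),f(\gamma(s')))\le c|s-s'|+Cv(s+s')$. The key point, special to hyperbolic targets, is that such a radially escaping path with Lipschitz control is forced to converge to a boundary point; more precisely I will prove the \emph{same-ray estimate} $(f(\gamma(t))|f(\gamma(s)))_{o'}\ge C_1 t-o(t)-c$ for all $t\le s$, which simultaneously gives convergence and independence of the ray. For the comparison I then take rays $\gamma,\gamma'$ to $\xi,\eta$ and set $t=(\xi|\eta)_o$. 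Because $f(\gamma(s))\to f_*\xi$ and $f(\gamma'(s'))\to f_*\eta$, these are admissible sequences in the definition of the extended product, whence $(f_*\xi|f_*\eta)_{o'}\ge\liminf_{s,s'\to\infty}(f(\gamma(s))|f(\gamma'(s')))_{o'}$. For $s,s'\ge t$ I split this product by the hyperbolic inequality using the intermediate points $f(\gamma(t)),f(\gamma'(t))$: it is bounded below by the minimum of three products, minus $2\delta_Y$. The middle product $(f(\gamma(t))|f(\gamma'(t)))_{o'}$ is handled directly and losslessly, since the two rays track each other up to radius $t$, so that $d_X(\gamma(t),\gamma'(t))=O(\delta_X)$ and a bare Lipschitz-plus-radial computation gives $\ge C_1 t-o(t)-c$; the two outer products are same-ray and are covered by the estimate above. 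This delivers the comparison.

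The crux, absorbing essentially all the work, is the same-ray estimate. Unlike the quasi-isometric case I cannot invoke the ordinary Morse lemma: $f$ is only radially expansive, not uniformly expansive, so the image of a geodesic need not be a quasigeodesic, and the naive Lipschitz bound on $d(f(\gamma(s)),f(\gamma(t)))$ is too lossy, off by a definite linear amount $(c-C_1)(s-t)$. The way through is to exploit hyperbolicity of $Y$ together with the growth of the Gromov products of consecutive image points: radial expansivity forces $(f(\gamma(k))|f(\gamma(k+1)))_{o'}\ge C_1 k-o(k)$, so these products grow roughly linearly, and a telescoping argument \emph{refined to use their monotone growth}, rather than the lossy logarithmic defect of the crude chaining, yields both the convergence and the bound $\ge C_1 t-o(t)-c$. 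Checking that the sublinear error terms are controlled uniformly, and that they attach to the scale $t$ of the triangle rather than to $|x|+|x'|$, is the delicate point I expect to require the most care.

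Finally, when $f$ is an SBE it is in addition $O(v)$-expansive and $O(v)$-surjective, hence admits a quasi-inverse $g\colon Y\to X$ which is again $O(v)$-Lipschitz and radially expansive (combine $O(v)$-surjectivity with the radial Lipschitz bound for $f$) and satisfies that $g\circ f$ and $f\circ g$ are $o(r)$-close to the identities. The construction above applies to $g$ and produces a H\"older map $g_*$; since $o(r)$-close radially expansive maps induce the same boundary map, one gets $g_*\circ f_*=\mathrm{id}_{\partial X}$ and $f_*\circ g_*=\mathrm{id}_{\partial Y}$, so $f_*$ is a H\"older homeomorphism.
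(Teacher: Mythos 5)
Your proposal is correct and follows essentially the same route as the paper's proof: your ``same-ray estimate'' is exactly the paper's chain estimate $(y_k|y_n)\ge (c-\varepsilon)k-M(c,\mu)$, which is proved there by the very mechanism you point to --- the consecutive products $(y_i|y_{i+1})\ge(c-\varepsilon)i$ give geometrically decaying terms $\mu^{-(c-\varepsilon)i}$, summed via the triangle inequality of the genuine visual metric $d_\mu$ furnished by the Ghys--Harpe lemma, thereby avoiding the logarithmic loss of iterated $\delta$-inequalities. Likewise, your three-point splitting at $f(\gamma(t)),f(\gamma'(t))$ with $t=(\xi|\eta)_o$ is the paper's $(y_n|y'_n)\ge\min\{(y_n|y_k),(y_k|y'_k),(y'_k|y'_n)\}-2\delta$ with $k=\lfloor(x|x')\rfloor$, and the quasi-inverse argument for the SBE case matches the paper's concluding assertion.
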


Here $\mu>0$ is a parameter appearing in the classical definition of the metric on the visual boundary; it has to be sufficiently small, in terms of the hyperbolicity constants of $X$ and $Y$.

\begin{cor}\label{corhy}
If hyperbolic groups have non-homeomorphic boundaries, then they are not SBE. 
\end{cor}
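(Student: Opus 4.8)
The plan is to read the corollary as the contrapositive of the homeomorphism conclusion in Theorem~\ref{i_hyp_bou}, applied to the Cayley graphs of the two groups. First I would fix finite (or compact) generating sets: given hyperbolic groups $G$ and $H$, their Cayley graphs $X$ and $Y$ are proper geodesic Gromov-hyperbolic spaces, and their visual boundaries $(\partial X,d_\mu)$, $(\partial Y,d_\mu)$ are canonically homeomorphic to the Gromov boundaries $\partial G$, $\partial H$ (independently of the chosen generating sets, since the boundary is a quasi-isometry invariant). Thus ``the boundaries of the groups'' may be read as $\partial X$ and $\partial Y$, and it suffices to show that an SBE between $G$ and $H$ forces $\partial X$ and $\partial Y$ to be homeomorphic.

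Suppose then that $G$ and $H$ are SBE; concretely this gives an SBE $f:X\to Y$ between the Cayley graphs. The only point requiring attention is that Theorem~\ref{i_hyp_bou} is stated for \emph{radially expansive} sublinearly Lipschitz maps, so I must check that $f$ (or a suitable representative) is radially expansive. This is a general feature of SBEs, coming from the $O(v)$-expansiveness in the definition. Taking $x'$ to be the base-point $o_X$ of $X$ (so $|o_X|=0$), the expansiveness inequality reads $c'|x|-C'v(|x|)\le d(f(x),f(o_X))\le |f(x)|+|f(o_X)|$, whence $|f(x)|\ge c'|x|-C'v(|x|)-|f(o_X)|$. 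Since $v=o(r)$, for $|x|$ large the term $C'v(|x|)$ is at most $\tfrac{c'}{2}|x|$, giving $|f(x)|\ge \tfrac{c'}{2}|x|-O(1)$; absorbing the finitely many small values of $|x|$ into the additive constant yields radial expansiveness.

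Theorem~\ref{i_hyp_bou} now applies directly: the SBE $f$ induces a H\"older homeomorphism $(\partial X,d_\mu)\to(\partial Y,d_\mu)$ for $\mu$ small enough in terms of the hyperbolicity constants, hence in particular a homeomorphism $\partial G\to\partial H$ of the group boundaries. This contradicts the hypothesis that the boundaries are non-homeomorphic, establishing the corollary. The only genuinely non-routine step is the verification of radial expansiveness, and even that is immediate once the definition of SBE is unwound and the sublinearity of $v$ is used; everything else is a citation of Theorem~\ref{i_hyp_bou} together with the standard identification of the visual boundary of a Cayley graph with the Gromov boundary of the group.
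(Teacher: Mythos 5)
Your proof is correct and follows essentially the same route as the paper, which derives Corollary~\ref{corhy} directly from the ``in particular'' clause of Theorem~\ref{i_hyp_bou} applied to an SBE between the groups (viewed via proper geodesic hyperbolic model spaces). Your explicit verification that an SBE is radially expansive---taking $x'$ to be the base-point in the $O(v)$-expansiveness inequality and absorbing the sublinear term---is exactly the point the paper leaves implicit, so nothing is missing or different in substance.
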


\begin{ex}
We retrieve the fact (Example \ref{free_surface}) that free group and a (closed) surface group are not SBE, as their boundaries are homeomorphic to a Cantor set, and a circle respectively. Corollary \ref{corhy} also distinguishes between 1-ended groups: a surface group is not SBE to a cocompact lattice in $\mathrm{PSL}_2(\C)$, and also between $\infty$-ended groups: for instance a free group and a free product of two surface groups are not SBE. The latter fact is also a consequence of Corollary \ref{rigi} below, proved in \S\ref{s_hyp}.
\end{ex}

\begin{cor}\label{rigi}
1) A compactly generated locally compact group is SBE to a free group if and only it admits a geometric (=proper cocompact continuous) action on a tree of finite valency. In particular, in the discrete case this characterizes virtually free groups.

2) A compactly generated locally compact group is SBE to a surface group if and only it admits a geometric action on the hyperbolic plane.
\end{cor}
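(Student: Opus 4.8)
The plan is to prove Corollary~\ref{rigi} by combining the boundary rigidity from Theorem~\ref{i_hyp_bou} with the standard JSJ/Stallings-type structure theory for hyperbolic groups, reducing each direction to a known topological characterization of the boundary. Both parts assert an equivalence, so I would treat the ``only if'' and ``if'' directions separately, and in each case I first reduce the general compactly generated locally compact group to the setting where Theorem~\ref{i_hyp_bou} applies. The easy direction is ``if'': if $G$ acts geometrically on a tree of finite valency (resp.\ on the hyperbolic plane $\mathbb{H}^2$), then by the Milnor--\v{S}varc lemma $G$ is quasi-isometric, hence a fortiori SBE, to that tree (resp.\ to $\mathbb{H}^2$); since a free group acts geometrically on its Cayley tree and a surface group acts geometrically on $\mathbb{H}^2$, transitivity of the SBE relation gives the conclusion.

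\smallskip

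For the ``only if'' direction, suppose $G$ is SBE to a free group $F$ (resp.\ a surface group $S$). First, $G$ is Gromov-hyperbolic: by the Proposition of \S\ref{intro_hyp}, Gromov-hyperbolicity is an SBE-invariant among compactly generated locally compact groups, so $G$ is hyperbolic with a proper geodesic Cayley graph. Next, Theorem~\ref{i_hyp_bou} (applied to the SBE, which is radially expansive) yields a H\"older homeomorphism $\partial G \to \partial F$ (resp.\ $\partial G\to\partial S$). Thus $\partial G$ is homeomorphic to a Cantor set (resp.\ to a circle $S^1$). The plan is now to invoke the known boundary-recognition theorems: a hyperbolic group with Cantor-set boundary is virtually free by Stallings' ends theorem together with the Dunwoody accessibility theorem (equivalently, it acts geometrically on a tree of finite valency); and a hyperbolic group with boundary homeomorphic to $S^1$ acts geometrically on $\mathbb{H}^2$, which is the Casson--Jungreis and Gabai convergence-group theorem (giving a cocompact Fuchsian action). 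In the locally compact setting I would cite the corresponding structure results for compactly generated locally compact hyperbolic groups, which reduce to a geometric action on a tree of finite valency or on $\mathbb{H}^2$ respectively.

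\smallskip

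The main obstacle I expect is entirely in the ``only if'' direction and is not in our new contribution but in correctly quoting and assembling the boundary-topology recognition theorems, especially over the general class of compactly generated locally compact groups rather than just finitely generated discrete ones. Theorem~\ref{i_hyp_bou} does the new work of transporting the \emph{topological} type of the boundary across an SBE; once one knows $\partial G$ is a Cantor set or a circle, everything else is an appeal to established rigidity. For the Cantor-set case the subtlety is that the convergence-group characterization must be phrased so as to produce an action on a tree of \emph{finite} valency (cocompactness plus properness of the action guarantees local finiteness of the Bass--Serre tree); for the circle case the delicate point is ensuring the convergence action on $S^1=\partial G$ coming from the hyperbolic structure is conjugate to a Fuchsian one, so that the geometric action on $\mathbb{H}^2$ genuinely exists. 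I would state the locally compact versions as the inputs and reference them, so that the corollary follows formally by chaining the SBE-invariance of hyperbolicity, the boundary homeomorphism of Theorem~\ref{i_hyp_bou}, and these recognition theorems.
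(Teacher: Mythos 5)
Your proposal is correct and takes essentially the same route as the paper: the paper's own justification of Corollary \ref{rigi} is exactly this assembly — SBE-invariance of Gromov-hyperbolicity (Proposition \ref{hyp_sbe_inv}), the H\"older boundary homeomorphism of Theorem \ref{i_hyp_bou} (Corollary \ref{corhy}), and the fact that free groups and surface groups are characterized among hyperbolic locally compact groups by the topology of their boundary, with Milnor--\v{S}varc handling the converse. The recognition theorems you cite (Stallings--Dunwoody for the Cantor set, Tukia--Casson--Jungreis--Gabai for the circle, in their locally compact versions) are precisely the ``characterized by the topology of their boundary'' input the paper invokes without further detail.
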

In both cases, the corollary follows from the fact that these particular groups are characterized by the topology of their boundary. Note that this is very specific. If the Cannon conjecture holds (asserting that every discrete hyperbolic group whose boundary is homeomorphic to the 2-sphere acts geometrically on the hyperbolic 3-space), then this could be extended to locally compact groups SBE to the hyperbolic 3-space. Nevertheless, this simplified argument would fail in higher dimension; for instance $\mathrm{SU}(2,1)$ and its cocompact lattices admit no geometric action on the hyperbolic 4-space although they admit a homeomorphic copy of the 3-sphere as boundary.

\begin{rem}\label{termy}
SBE spaces were called cone-bilipschitz equivalent spaces in \cite{CoI}. But a free group and a surface group have bilipschitz-equivalent asymptotic cones but are not SBE (Example \ref{free_surface}): this is why we avoid this misleading terminology: bilipschitz maps between the asymptotic cones are not induced by maps between the groups. Note that in \cite{CoI}, it was established that given any two metric spaces, $X,Y$ any map $X\to Y$ inducing a bilipschitz homeomorphism between all asymptotic cones is an SBE.
\end{rem}

Corollary \ref{corhy} motivates the following questions, where by hyperbolic group we mean a locally compact group that is Gromov-hyperbolic for the word metric with respect to a compact generating subset. The two main popular examples are the case of finitely generated groups, and the case of Lie groups with a left-invariant negatively curved Riemannian metric.

\begin{que}\label{exhoho}
Does there exist any two hyperbolic groups with homeomorphic, but not H\"older-homeomorphic boundaries?
\end{que}

Let us point out that the boundary of a simply connected negatively curved $d$-dimensional manifold with a cocompact isometry group (discrete or not) is H\"older-homeomorphic to a round $(d-1)$-dimensional sphere. In particular, connected Lie groups will not provide examples for Question \ref{exhoho} (although the quasi-isometric classification therein is a rich problem, see \cite{Csu}).

\begin{que}\label{hynosb}
Does there exist any two hyperbolic groups that are not SBE but whose boundaries are homeomorphic? H\"older-homeomorphic? [Update: see Remark \ref{pallier}.]
\end{que}

\begin{ex}[SBE Lie groups that are not QI]\label{exr2uni}
For $a\in\{0,1\}$, let $G_a$ be the semidirect product $\R^2\rtimes\R$, where the action is given  by $t\cdot (x,y)=e^t(x+aty,y)$. Then $G_a$ admits a left-invariant negatively curved Riemannian metric. For $a=0$, it can be chosen to be of constant curvature $-1$; in particular $G_0$ is quasi-isometric to the real hyperbolic 3-space. It was proved by Xie \cite{Xie} that $G_1$ is not quasi-isometric to $G_0$. Nevertheless, $G_1$ and $G_0$ are SBE (and actually $O(\log r)$-SBE), see \cite[Theorem 4.4]{CoI}.
\end{ex}

The group $G_1$ has no lattice and is actually not quasi-isometric to any finitely generated group (this follows from the same result of Xie \cite{Xie}, see the discussion in \cite[\S 6]{Csu}). Therefore we can ask the question in the discrete case:

\begin{que}\label{hynonQI}
Does there exist any two discrete hyperbolic groups that are SBE but not quasi-isometric?
\end{que}

The last question concerns some particular important examples.

\begin{que}\label{semi}~
\begin{enumerate}
\item Consider the semidirect product $G_\alpha=\R^2\rtimes\R$, where $t\in\R$ acts through the diagonal matrix $(e^t,e^{\alpha t})$. Are the $G_\alpha$, for $\alpha\ge 1$, pairwise non-SBE?

\item\label{Drutu} (Suggested by C. Dru\c tu.) Are $\mathbb{H}^4_\R$ and $\mathbb{H}^2_\C$ non-SBE? The question can be extended to all pairs of non-homothetic negatively curved symmetric spaces of the same dimension. [Update: see Remark \ref{pallier}.]
\end{enumerate}
\end{que}

\begin{rem}\label{pallier}
Gabriel Pallier \cite{Pal} has announced a positive answer to Dru\c tu's question (in its extended version), thereby answering positively Question \ref{hynosb}.
\end{rem}

\subsection{On finite presentability}

Among finitely generated groups, finite presentability is a quasi-isometry invariant. More generally, among compactly generated locally compact groups, compact presentability is a quasi-isometry invariant. Indeed, it can be characterized as large-scale simple connectedness, a purely metric notion (see for instance \cite[Chap.~8]{CH}). 
Unsuccessful attempts suggest that it has no reason to be an SBE invariant in general. However there exists a stronger version of finite presentability that is SBE-invariant, namely the linear isodiametric filling property (LID). We define it precisely in \S\ref{SBEfp}; it roughly means that loops of length $n$ can be filled within a ball of radius $O(n)$. 

Many reasonable groups are known to be LID: combable and 3-manifold groups \cite{Ge}, central-by-hyperbolic groups \cite{Con}, virtually connected Lie groups (and hence their cocompact lattices, e.g., polycyclic groups) \cite[Theorem 3.B.1 and Remark 3.B.4]{CT}. It is also very likely that the geometric proofs that non-cocompact lattices in connected Lie groups are finitely presented with at most exponential Dehn function, also imply that they are LID. However, Gersten \cite{Ge} proved that there exist 1-relator groups that are not LID (actually, with iterated exponential as isodiametric function), for instance the Baumslag-Gersten group $\langle x,y\mid x^{x^y}=x^2\rangle$.

\begin{thm}
Among compactly generated locally compact groups, being LID is SBE-invariant.
\end{thm}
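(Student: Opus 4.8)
The plan is to phrase LID metrically and then exploit the homogeneity of groups together with the sublinearity of the error term. Fix an SBE $f\colon G\to H$ with coarse inverse $g\colon H\to G$ and a sublinear function $v=o(r)$, $v\ge 1$, controlling both. I first recall that, for a compactly generated locally compact group, LID is equivalent to a purely coarse statement: there is a fixed scale $r_0$ such that every $r_0$-loop is filled by $r_0$-triangles within a ball whose radius is linear in the length of the loop. Because left translations are isometries, this \emph{origin-based} isodiametric function $D(n)$ (the least $R$ so that every loop of length $\le n$ based at the identity fills within the ball of radius $R$ about the identity) controls fillings everywhere; homogeneity of the space, not of the map, is what makes the reduction to the origin legitimate, and is exactly where the hypothesis that $G,H$ are groups rather than arbitrary graphs enters.

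The transfer step turns a loop of length $\le n$ based at $e_H$ into a coarse filling in $H$. Such a loop lies in $B_H(e_H,n)$, so all its points $x$ satisfy $|x|\le n$ and the SBE errors are bounded by $v(n)=o(n)$. Naively pulling the loop back edge-by-edge through $g$ would stretch its length by the factor $v(n)$ and destroy linearity; instead I sample the loop at scale $\sim v(n)$, so that the $\approx n/v(n)$ sample points pull back under $g$ to a loop $\tilde\gamma\subseteq B_G(e_G,O(n))$ of total length $O(n)$. Applying LID in $G$ fills $\tilde\gamma$, at the fixed scale, within $B_G(e_G,O(n))$; pushing this filling forward by $f$ and using $f\circ g\simeq\mathrm{id}$ yields an $O(v(n))$-scale filling, within $B_H(e_H,O(n))$, of a coarse loop $O(v(n))$-close to the original. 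Thus a size-$n$ loop in $H$ admits a coarse filling at scale $O(v(n))$ of linear diameter, provided its coarse triangles---which are loops of size $O(v(n))$---can in turn be filled finely.

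This last clause is self-referential, and the point is that it closes into a usable recursion. Writing $D_H$ for the origin-based isodiametric function of $H$, the construction gives
\[
D_H(n)\;\le\;O(n)+D_H\big(Cv(n)\big),
\]
where homogeneity is used once more to fill each coarse triangle (of size $O(v(n))$, located anywhere in $B_H(e_H,O(n))$) within radius $D_H(Cv(n))$ of its position. Since $v=o(r)$, the iterates of the map $n\mapsto Cv(n)$ decay geometrically once $n$ is large and reach a fixed threshold after finitely many steps, so the accumulated radii satisfy $\sum_k v^{(k)}(n)=O(n)$ and unrolling yields $D_H(n)\le O(n)+D_H(O(1))$. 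It therefore remains to see that $D_H(O(1))<\infty$, i.e. that bounded loops in $H$ fill finitely: for loops of bounded length based at $e_H$ the error $v(O(1))$ is bounded, the SBE behaves exactly like a quasi-isometry, and the standard transfer of coarse simple connectedness from the compactly presented group $G$ (which LID forces) applies. Combining, $D_H(n)=O(n)$, so $H$ is LID.

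The main obstacle is precisely that the sublinear error $v(|x|)$ is governed by the distance to the \emph{fixed} origin, which is $O(n)$ throughout the construction, so the SBE can never be used to refine a filling below scale $v(n)$: unlike the quasi-isometric case, one cannot transport a fine filling across the map in a single step. The resolution is the recursion above, which trades one step of refinement for a smaller loop size and relies on two quantitative facts---the summability $\sum_k v^{(k)}(n)=O(n)$ and the strictly linear (not merely finite) diameter control coming from LID in $G$, which is exactly the extra strength that compact presentability alone lacks. Carrying out the diameter bookkeeping so that every level of the recursion stays inside $B_H(e_H,O(n))$, and verifying the bounded-scale base case, is the technical heart of the argument.
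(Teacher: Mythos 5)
Your proposal is correct and follows essentially the same route as the paper's proof: one round-trip through the SBE pair turns a loop of size $n$ into a filling by loops of size $O(v(n))$ inside a linear-radius ball, and homogeneity plus sublinearity of $v$ closes the induction on loop size, which is exactly the mechanism of the paper's proposition on QI-homogeneous graphs (of which the group statement is a corollary). The differences are presentational rather than structural: the paper measures loops by diameter (so your sampling-at-scale-$v(n)$ device is replaced by a small interpolation trick), it proves the slightly more general statement that LID passes to sublinearly Lipschitz retracts with QI-homogeneous source, and your explicit recursion $D_H(n)\le O(n)+D_H(Cv(n))$ with geometrically decaying iterates is a cleaner quantitative phrasing of the paper's qualitative induction.
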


This implies that groups SBE to most known finitely/compactly presented groups are also finitely/compactly presented. For instance, we deduce:

\begin{cor}
Every finitely generated group SBE to a connected Lie group (or a cocompact lattice therein) is finitely presented. More generally, every compactly generated locally compact SBE to a connected Lie group is compactly presented.
\end{cor}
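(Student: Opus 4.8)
The plan is to combine the preceding theorem, that the LID property is SBE-invariant among compactly generated locally compact groups, with the known fact that the target groups are themselves LID. First I would recall that, by \cite{CT} (Theorem 3.B.1 and Remark 3.B.4), every virtually connected Lie group satisfies the linear isodiametric filling property, and that the same holds for its cocompact lattices (as already noted in \S\ref{SBEfp}). Thus each class of groups appearing as a target in the statement --- a connected Lie group, or a cocompact lattice therein --- is LID.

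Second, I would invoke the SBE-invariance of LID. If $G$ is a compactly generated locally compact group that is SBE to one of these targets, then $G$ is SBE to a LID group, and hence, by the invariance theorem, $G$ is itself LID. Note that it is convenient to apply the invariance theorem directly with the cocompact lattice (rather than the ambient Lie group) as target, precisely because the lattice has already been identified as LID; this avoids any need to compose SBEs.

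Third, I would use that LID strengthens compact presentability. By the very definition adopted in \S\ref{SBEfp}, a LID group is in particular compactly presented, with the additional feature of a linear isodiametric filling function. Hence $G$ is compactly presented, which is the general locally compact assertion. In the special case where $G$ is finitely generated, i.e.\ discrete, compact presentability coincides with ordinary finite presentability, yielding the first assertion.

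I do not expect a genuine obstacle, since the corollary is a formal consequence of the invariance theorem once the targets are recognized as LID. The only points deserving care are definitional: one must confirm that the formulation of LID in \S\ref{SBEfp} indeed entails compact presentability (so that the linear bound is applied to loops that can in fact be filled), and verify that the cocompact lattices are correctly covered as LID targets, so that the property transports unchanged across the SBE.
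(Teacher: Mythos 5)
Your proposal is correct and is essentially the paper's own (implicit) argument: the corollary is stated as a direct consequence of the SBE-invariance theorem for LID, combined with the fact from \cite{CT} that virtually connected Lie groups and their cocompact lattices are LID, and the observation that LID entails large-scale simple connectedness and hence compact (resp.\ finite) presentability. The two points you flag for care are exactly the right ones, and both check out as you expect.
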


The corollary was already clear in case the group has all its asymptotic cones simply connected, because this property passes to SBEs and implies compact presentability. However, it is new, for instance, in the case of the group SOL, which has the same asymptotic cones as the (infinitely presented) lamplighter group \cite[\S 9]{CoJT}.

The theorem holds in some further generality for metric spaces, under a QI-homogeneity assumption, but not in general, as it is easy to show that $\Z$ is SBE to a graph of valency $\le 3$ that is not LID (and even not large-scale simply connected, see Example \ref{sbe_noninv}).

The questions remain open for groups:

\begin{que}
1) Does there exist a finitely presented group $G$ that is SBE to an infinitely presented (finitely generated) group?

2) What if $G$ is the Baumslag-Gertsen group (defined above)? More generally, can a 1-relator group be SBE to an infinitely presented group?
\end{que}

\subsection{SBE of nilpotent groups}\label{nigr}
This part is the core of the paper. Its purpose is to use our notion of SBE to formulate and improve Goodman's and Pansu's theorems \cite{Goo,Pan}.

Every finitely generated nilpotent group, or more generally any compactly generated, locally compact group has a proper homomorphism with cocompact image into a simply connected nilpotent Lie group, called its real Maltsev completion. Since the completion homomorphism is a quasi-isometry, it is enough to discuss only simply connected nilpotent Lie groups. 

Recall that a Carnot group is a simply connected nilpotent Lie group whose Lie algebra is Carnot. A nilpotent Lie algebra is called Carnot if it admits a Carnot grading, which means a grading by positive integers such that the first layer generates the Lie algebra. 

Let $G$ be a simply connected nilpotent Lie group and $\g$ its Lie algebra. The associated Carnot-graded Lie algebra $\g_\infty$ of $\g$ is the graded Lie algebra $\bigoplus\g^i/\g^{i+1}$, where $(\g^i)_{i\ge 1}$ is the lower central series, and where the bracket $\g^i/\g^{i+1}\times \g^j/\g^{j+1}\to \g^{i+j}/\g^{i+j+1}$ is induced by the restriction of the bracket $\g^i\times\g^j\to\g^{i+j}$. The corresponding simply connected nilpotent Lie group is denoted by $G_\infty$, and called associated Carnot Lie group.

To every finite-dimensional nilpotent Lie algebra $\g$, we associate a numerical invariant $e_\g\in [0,1\mathclose[$, which is practically computable (by solving some linear systems of equations). It satisfies $e_\g=0$ if and only if $\g$ is Carnot. For a $c$-step nilpotent non-Carnot Lie algebra, $e_\g\in\{i/j:2\le i<j\le c\}$ (and all these values can be achieved, see Proposition \ref{alle}). In particular, $e_\g$ belongs to $\{0\}\cup [2c^{-1},1-c^{-1}]$. We write $e_G=e_\g$.

\begin{thm}\label{mainSBE}
Let $G$ be a simply connected nilpotent Lie group with associated Carnot Lie group $G_\infty$, and $e=e_G$. Then $G$ is $O(r^e)$-SBE to $G_\infty$.
\end{thm}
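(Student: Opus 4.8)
The plan is to exhibit an explicit candidate SBE and reduce the statement to a two-sided metric comparison. First I would put $G$ and $G_\infty$ on the same underlying manifold: fixing a vector-space splitting $\g=\bigoplus_i V_i$ with $V_i$ a complement of $\g^{i+1}$ in $\g^i$ identifies $\g$ with $\g_\infty$ as graded vector spaces, and via the respective exponential coordinates this gives a diffeomorphism $\Phi\colon G\to G_\infty$ (the identity in coordinates). Since the Maltsev completion is a quasi-isometry and the word metric is bi-Lipschitz to a left-invariant Carnot--Carath\'eodory (or Riemannian) metric, it suffices to work with such metrics $d_G,d_{G_\infty}$ built from the common horizontal subspace $V_1$ (which is intrinsic, being $\g/\g^2$ for both brackets). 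As $\Phi$ is a bijection, the $O(r^e)$-surjectivity condition is automatic, so the whole content is the pair of bounds defining $O(r^e)$-Lipschitzness and $O(r^e)$-expansiveness, i.e. $|d_G(x,x')-d_{G_\infty}(\Phi x,\Phi x')|\le C(|x|+|x'|)^e$ up to the multiplicative constants allowed in the definition.

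The core estimate I would obtain through the dilation flow. Let $\delta_t$ be the Carnot dilations, acting on $V_i$ by $t^i$; these are automorphisms of $\g_\infty$ but only approximate automorphisms of $\g$. Writing the $G$-bracket of $X\in V_i$, $Y\in V_j$ as $[X,Y]_G=[X,Y]_\infty+\sum_{k>i+j}c_k$ with $c_k\in V_k$ (the higher-weight terms being exactly the obstruction to being Carnot, since $[\g^i,\g^j]\subseteq\g^{i+j}$), one computes $\delta_t^{-1}[\delta_tX,\delta_tY]_G=[X,Y]_\infty+\sum_{k>i+j}t^{\,i+j-k}c_k$. Thus conjugating the structure of $G$ by $\delta_t$ converges to that of $G_\infty$ at a polynomial rate governed by the exponents $k-(i+j)>0$ of the surviving deviation terms; this is the algebraic incarnation of ``$G_\infty$ is the asymptotic cone of $G$.'' The invariant $e=e_\g$ is precisely designed to record the slowest relevant rate, so that after rescaling a ball of radius $r$ to unit size by $\delta_{1/r}$, the two group laws agree up to an error of order $r^{-(1-e)}$.

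Next I would convert this structural convergence into the metric bound. I would compare $d_G$ and $d_{G_\infty}$ by transporting geodesics: a unit-speed $G$-horizontal path joining $x$ to $x'$ and staying within the ball of radius $\asymp r:=|x|+|x'|$ is, read in $G_\infty$-coordinates, almost $G_\infty$-horizontal, the vertical defect at a point of norm $\rho$ being controlled by the mismatch $dL^G-dL^{\infty}$ of the two left-translations, itself expressible through the deviation terms $c_k$ above. Correcting this defect costs extra length; integrating the pointwise relative error $\asymp\rho^{-(1-e)}$ along a path of length $\asymp r$ confined to $\{\rho\lesssim r\}$ yields a total discrepancy $\lesssim r\cdot r^{-(1-e)}=r^{e}$, and symmetrically in the other direction. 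Left-invariance of both metrics reduces an arbitrary pair $(x,x')$ to the origin-based estimate after translating by $x$, with the base-point shift absorbed into the $(|x|+|x'|)^e$ budget.

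The hard part, I expect, is twofold and concentrated in the last two steps. First, making the metric comparison quantitative with the \emph{sharp} exponent --- not merely some $e'<1$, which already follows from Pansu's qualitative SBE recalled in \S\ref{nigr} --- requires identifying the dominant deviation term and proving a stability estimate for the Carnot--Carath\'eodory distance under the $\delta_t$-perturbation of the structure that is uniform over all scales; this is exactly where the precise (and a priori optimal) definition of $e_\g$ must be matched to the metric rate. Second, one must handle the ball-box subtleties for the non-Carnot group $G$, where the homogeneous coordinates do not transform exactly under $\delta_t$, so the pointwise error along a geodesic must be estimated without the clean homogeneity available in $G_\infty$. Once these uniform pointwise-to-global integrations are in place, assembling the two one-sided bounds into the $O(r^e)$-Lipschitz and $O(r^e)$-expansive conditions is routine.
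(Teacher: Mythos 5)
Your skeleton matches the paper's: the candidate SBE is the identity map in exponential coordinates (the paper's $\Phi_D$ in Theorem \ref{resbe}), surjectivity is free since it is a bijection, and the whole statement reduces to a two-sided comparison of left-invariant metrics. But two steps have genuine gaps. First, the splitting cannot be ``fixed'' arbitrarily: different compatible splittings $\g=\bigoplus_i V_i$ produce different deviation terms and different exponents $e_D\ge e_\g$, and the theorem requires choosing a grading operator $D$ realizing the attained infimum $e_\g=\min_D e_D$ (Definition \ref{defe}); with an arbitrary splitting your argument could only give $O(r^{e_D})$, possibly strictly worse. Second, and more seriously, your central claim --- that after rescaling by $\delta_{1/r}$ the two group laws agree up to $r^{-(1-e)}$ --- is asserted rather than proved, and the justification you offer (the dilation action on the \emph{binary} bracket deviation $[X,Y]_G-[X,Y]_\infty$) cannot suffice. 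The group law is the Baker--Campbell--Hausdorff series, which involves iterated brackets of all lengths, and controlling binary deviations at rate $e$ does \emph{not} control $n$-fold deviations at the same rate: this is precisely why $e_\g$ is defined in \S\ref{wdc} through conditions on all tuples $\wp$ ($n$-derivations), and the paper exhibits (remark in \S\ref{misce}, the algebra $\g_{7,1,2(i_1)}$ of \cite{Mag}) a Lie algebra where dropping the $n\ge 3$ conditions would change the value of $e_\g$. Bridging brackets to group laws is the actual core of the paper's proof, namely the Goodman-type inequality (Lemma \ref{lgoodmane})
\[
\lfloor (z_1\ast z_2)-(z_1\ast_\infty z_2)\rfloor\le C\max\bigl(1,\lfloor z_1\rfloor^{e_D},\lfloor z_2\rfloor^{e_D}\bigr),
\]
proved from Corollary \ref{edcro}, submultiplicativity of norms on iterated brackets, and term-by-term BCH bookkeeping; your proposal leaves exactly this unproved.

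Your metric conversion also departs from the paper's, to your disadvantage. You propose transporting $G$-geodesics into $G_\infty$ and integrating a pointwise defect, and you correctly flag the sub-Riemannian correction and ball-box issues this creates --- but these are difficulties the paper never has to face, and as written your integration step is not a proof: correcting a vertical defect of size $\eta$ in layer $k$ costs on the order of $\eta^{1/k}$ in Carnot--Carath\'eodory distance, and such costs do not accumulate additively along a path, so ``integrating the relative error'' needs a genuine argument. The paper sidesteps all of this: by Guivarch's estimates \cite{Guiv}, both $d(0,z)$ and $d_\infty(0,z)$ are comparable to the Guivarch norm $\lfloor z\rfloor$, so by left-invariance, subadditivity of $\lfloor\cdot\rfloor$, and a \emph{single} application of Lemma \ref{lgoodmane} to the endpoints,
\[
d(z_1,z_2)=d\bigl(0,(-z_1)\ast z_2\bigr)\le M^2\,d_\infty(z_1,z_2)+MC\max\bigl(1,\lfloor z_1\rfloor^{e},\lfloor z_2\rfloor^{e}\bigr),
\]
and symmetrically with $d$ and $d_\infty$ exchanged. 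So the missing ingredients you would need to supply are: the optimal choice of $D$; the iterated-bracket estimate; and, if you insist on the path-transport scheme, a correction argument that the paper shows can simply be discarded in favor of Guivarch's endpoint estimates.
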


\begin{rem}
Let $c$ be the nilpotency length of $G$. That $G$ is $O(r^{1-c^{-1})}$-SBE to $G_\infty$ was observed by the author in \cite[Proposition A.14]{CoI}\footnote{It appears as Proposition A.9 in the arXiv (v2) version of \cite{CoI}, due to a weird shift of numbering in the published version of the appendix of \cite{CoI}.}, relying on computations performed in a 2007 preprint version of \cite{Bre}; it turns out that the computation already appeared in Goodman's 1977 article \cite{Goo} (which was written before asymptotic cones were defined by Gromov); the exponent is not explicit in the statement of Goodman's theorem but the proof makes clear that it is $\le 1-c^{-1}$. 
 The pioneer reference \cite{Goo} is missing in \cite{Pan,Bre,CoI,BreLD}.
 
The main step towards Theorem \ref{mainSBE} is on the one hand the preliminary algebraic work of \S\ref{wdc}, and on the other hand the estimate of Lemma \ref{lgoodmane}. Goodman \cite[Theorem 1]{Goo} established Lemma \ref{lgoodmane} with the exponent replaced with $e=1-c^{-1}$, in which case the preliminary algebraic work is unnecessary. It is easy to see that this lemma, for a given exponent $e<1$, implies Theorem \ref{mainSBE} for the same exponent $e$: see the proof of Theorem \ref{resbe}. From this and Guivarch's earlier estimates for the distance to the origin, it is easy to conclude
that the asymptotic cone of $G$ is bilipschitz equivalent to $G_\infty$ endowed with a Carnot-Caratheodory metric. This latter step follows conveniently from the SBE language, since the SBE map $G\to G_\infty$ induces a bilipschitz map between asymptotic cones, while it is straightforward that $G_\infty$ endowed with a Carnot-Caratheodory metric is isometric to its asymptotic cones. Pansu in \cite{Pan} was not aware of \cite{Goo} and reproduced a computation akin to that leading to \cite[Theorem 1]{Goo}, but more involved. Indeed, it relies on sharper estimates for the distance to the origin, notably resulting in a description of the asymptotic cone up to isometry and not only up to bilipschitz homeomorphism as above.
The latter remark also applies to the generalization of Pansu's results in the sub-Finsler case by Breuillard and Le Donne \cite{BreLD}.
 
Besides, let us mention that while here we improve the exponent in many cases, the same exponent $1-c^{-1}$ appears in a related finer statement of Breuillard and Le Donne, namely \cite[Proposition 3]{BreLD}\footnote{Proposition 3.1 in the current arXiv version (v1) of \cite{BreLD}.}, for which the exponent $1-c^{-1}=1/2$ in some 2-nilpotent cases is sharp, so that in their statement the exponent $1-c^{-1}$ cannot be replaced by $e_\g$. 
\end{rem}
 
This raises the question about the converse:

\begin{que}\label{q_sbenil}
Given a function $r\mapsto f(r)\ge 1$, is it true that $G$ is $O(f(r))$-SBE to $G_\infty$ if and only if $r^e=O(f(r))$? In particular, is it true that $G$ is $O(r^\alpha)$-SBE to $G_\infty$ if and only if $\alpha\ge e_G$?
\end{que}
 
For $f(r)=1$, a positive answer to the question is equivalent to asking whether being Carnot is a quasi-isometry invariant, or equivalently whether whenever $G$ is non-Carnot, it is not quasi-isometric to $G_\infty$. 
This is the only case where we have a partial answer, as Shalom \cite{sh04} provided a 7-dimensional example of $G$ such that $G$ is not quasi-isometric to $G_\infty$, proving quasi-isometry invariance of the Betti numbers. It was observed in \cite[\S 6.F]{Csu} that Sauer's quasi-isometry invariance of the real cohomology algebra \cite{Sau06}\footnote{The assertion is that whenever $G_1,G_2$ are quasi-isometric simply connected nilpotent Lie groups with Lie algebras $\g_1,\g_2$, then the cohomology algebras $H^*(\g_1)$ and $H^*(\g_2)$ are isomorphic as graded real algebras. Technically speaking, \cite{Sau06} applies only when $G_1,G_2$ admit lattices. To work without restricting to $G_1,G_2$ admitting lattices, the main missing point was to have a ``uniform measure equivalence" for $G_1$ and $G_2$. This was established in \cite[Theorem 5.14]{KKR}, elaborating on \cite[Theorem 1]{BR}, which provides a topological coupling for any two coarsely equivalent second countable locally compact groups. This being granted, Roman Sauer explained to me that the proof works, with some care in the use of continuous cohomology.},  provides further examples, including one in dimension 5 (which is the smallest dimension where non-Carnot $G$ occur). However, even in these examples, we do not know whether, say, $G$ and $G_\infty$ are $O(\log(r))$-SBE, which sounds very unlikely (see Question \ref{dehn}).

These results are obtained in \S\ref{SBEnil}. The algebraic \S\ref{wdc} introduces in detail the Lie algebra conditions necessary to describe $e_\g$, and we hope that its contents will prove relevant in other contexts.

\subsection{Large-scale contractions and similarities} 

In \S\ref{LSCS}, we introduce notions which, unlike the previous ones, are new concepts even in the quasi-isometric setting. Namely, we define a metric space to be large-scale contractable if it admits a self-quasi-isometry in which the large-scale Lipschitz upper bound has multiplicative constant $<1$.  We show that this class is quasi-isometrically closed (Proposition \ref{lsc_qi}); we show that, for connected graphs of bounded valency, it implies polynomial growth (Proposition \ref{lsc_po}). For compactly generated locally compact group, the study thus reduces to simply connected nilpotent Lie groups. Which such groups $G$ are large-scale contractable? Those $G$ with a contracting automorphisms are large-scale contractable; such $G$ can be characterized as those whose Lie algebra admits a grading by positive integers. Are these the only ones? (This is Question \ref{lsc_q}, see there for a little more context.) 

We extend in \S\ref{s_sc} this notion to the sublinearly bilipschitz setting, defining the more general notion of $O(u)$-sublinearly contractible metric space. Understanding for which $u$ a given simply connected nilpotent Lie group is $O(u)$-sublinearly contractible is a challenging problem. An illustrating example, for which partial results can be obtained, is given in Proposition \ref{708}. 

Finally, in \S\ref{s_lss}, we introduce the more subtle notion of large-scale similarity. Let us describe it informally. Given a metric space and a self-map with a contracting behavior, we map a pair $(x,x')$ of points to the number of steps necessary for them to become at distance $\le t$. This yields a function $\Lambda_t(x,x')$. Under reasonable assumption, the class of $\Lambda_t$ modulo addition of a bounded function, does not depend on the choice of large enough $t$. We then say that a self-quasi-isometry is a large-scale $b$-similarity ($b<1$) if this function differs by a bounded function to $\log^+_{b^{-1}}(d(x,x'))$. 

We say that a metric space is large-scale homothetic if it admits a large-scale $b$-similarity for some $b<1$. Somewhat surprisingly, this definition is robust enough to be a quasi-isometry invariant (Corollary \ref{lsh_qi}). Among simply connected nilpotent Lie groups, examples are given by Carnot groups. Indeed, the automorphisms arising from a Carnot grading act by similarities for the Carnot-Caratheodory metric, which is (large-scale) equivalent to the left-invariant Riemannian metrics. It is natural to wonder whether these are the only cases (Question \ref{q_lshcarnot}).

\noindent {\bf Acknowledgement.} I warmly thank Enrico Le Donne for many corrections to a previous version. I am indebted to Gabriel Pallier for his interest to the questions formulated here and for pointing out \cite{Goo}.

\setcounter{tocdepth}{1}
\tableofcontents

\let\addcontentsline=\trueaddcontentsline
%Note: ceci fait suite à la ligne permettant de ne pas inclure de ligne relative à l'introduction dans la table des matières (située en fin d'introduction), et dit de commencer l'indexation ici
\section{Sublinearly Lipschitz maps}\label{s_sbm}

Here, all asymptotic behaviors are meant when $r\to\infty$.

The functions $v$ we consider when defining the $O(r)$-category or $o(r)$-category are assumed to satisfy the following:
\begin{defn}
We say that a function $v:\R_+\to\R$ is admissible if it satisfies:
\begin{itemize}
\item $v$ is non-decreasing;
\item $v$ grows at most linearly: $\limsup_{r\to\infty} v(r)/r<\infty$;
\item $v$ is doubling: $v(tr)/v(r)$ is bounded above for all $t>0$;
\item $v\ge 1$.
\end{itemize}
\end{defn}
However, it is often convenient to work with functions that fail to satisfy one of these hypotheses (or even to be defined) for some small values (e.g., $v(r)=\log(r)^2$ or $v(r)=\log\log r$). Then it should be understood that $v$ is replaced by some function coinciding with $v$ for large $r$ and satisfying the previous assumptions.

Note that $v(t+c)/v(t)$ is also bounded above for all $c$, because $t+c\le 2t$ for large $t$ and the doubling assumption.

\begin{prop}
Let $v$ be an admissible map.
A composition of $O(v)$-Lipschitz maps is $O(v)$-Lipschitz. Moreover, composition is compatible with $O(v)$-closeness. The same assertions hold with $o(v)$ instead of $O(v)$.
\end{prop}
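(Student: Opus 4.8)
The plan is to verify the two composition statements directly from the definitions, treating the $O(v)$ case first and then noting that the $o(v)$ case follows by the same manipulations applied to the dominating functions. The admissibility hypotheses on $v$—non-decreasing, at most linear growth, and especially doubling—are exactly what make the error terms combine correctly, so the whole argument is really an exercise in tracking how $v(|x|+|x'|)$ propagates through a composition.

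First I would set up notation: let $f:X\to Y$ and $g:Y\to Z$ be $O(v)$-Lipschitz, with constants $(C_f,C_f')$ and $(C_g,C_g')$ respectively. To estimate $d(g(f(x)),g(f(x')))$ I apply the Lipschitz bound for $g$ to the pair $f(x),f(x')\in Y$, which introduces a term $C_g' v(|f(x)|+|f(x')|)$. The key step is to control this radial term: applying the $O(v)$-Lipschitz bound for $f$ to the pair $(x,\text{base-point})$ gives $|f(x)|\le C_f|x|+C_f' v(|x|)+\mathrm{const}$, and since $v$ grows at most linearly this is $O(|x|)+O(v(|x|))=O(\max(|x|,1))$; hence $|f(x)|+|f(x')|\le A(|x|+|x'|)+B$ for suitable constants $A,B$. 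Feeding this into $v$ and using that $v$ is non-decreasing followed by the doubling property (in the strengthened form $v(t+c)/v(t)$ bounded, noted just before the proposition) yields $v(|f(x)|+|f(x')|)\le A' v(|x|+|x'|)$ for some constant $A'$. Combining with the inner $f$-estimate, $d(g(f(x)),g(f(x')))\le C_gC_f\,d(x,x')+(C_gC_f'+C_g'A')\,v(|x|+|x'|)$, which is exactly the $O(v)$-Lipschitz bound for $g\circ f$.

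For compatibility with closeness, suppose $f,f'$ are $O(v)$-close, say $d(f(x),f'(x))\le C''v(|x|)$, and $g,g'$ are $O(v)$-close. To compare $g\circ f$ with $g'\circ f'$ I interpolate through $g\circ f'$: the term $d(g(f(x)),g(f'(x)))$ is bounded using the $O(v)$-Lipschitz property of $g$ applied to the pair $f(x),f'(x)$, producing $C_g\,d(f(x),f'(x))+C_g'v(|f(x)|+|f'(x)|)\le C_gC''v(|x|)+O(v(|x|))$ after the same radial-control-plus-doubling estimate as above; the remaining term $d(g(f'(x)),g'(f'(x)))\le C''' v(|f'(x)|)$ is likewise $O(v(|x|))$ since $|f'(x)|=O(\max(|x|,1))$ and $v$ is doubling. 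Adding these gives $O(v)$-closeness of the composites.

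The only genuine obstacle is the radial-control lemma $v(|f(x)|+|f(x')|)=O(v(|x|+|x'|))$, and this is precisely where every admissibility axiom is used: at most linear growth converts the $O(|x|)$ bound on $|f(x)|$ into something $v$ can absorb, monotonicity lets us replace the argument of $v$ by an upper bound, and doubling lets us strip off the multiplicative and additive constants $A,B$. For the $o(v)$ statements one repeats the argument verbatim with $v$ replaced by a witness $v'=o(v)$ for each map; since the finitely many dominating functions can be replaced by a common $v''=o(v)$ dominating them all (e.g.\ their maximum, which is still admissible and $o(v)$), the composite is $O(v'')$-Lipschitz with $v''=o(v)$, hence $o(v)$-Lipschitz, and similarly for closeness.
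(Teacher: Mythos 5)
Your proof is correct and follows essentially the same route as the paper's: both hinge on the radial-control estimate $|f(x)|\le A|x|+B$ (obtained from the Lipschitz bound against the base-point plus the at-most-linear growth of $v$), followed by monotonicity and doubling to absorb $v(|f(x)|+|f(x')|)$ into $O(v(|x|+|x'|))$. The only differences are cosmetic: you prove compatibility with closeness in one pass by interpolating through $g\circ f'$ where the paper treats the two substitutions ($g\mapsto g'$ and $f\mapsto f'$) separately, and you spell out the common-dominating-function step for the $o(v)$ case that the paper leaves as ``follows immediately.''
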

\begin{proof}
Suppose that $f:X\to Y$ and $g:Y\to Z$ are $O(v)$-Lipschitz, say with constants $C,C'$ in the metric upper bound; also we multiply $v$ by a constant if necessary to remove the other constant and assume $v\ge 1$. Then, denoting by $x_0$ the fixed base-point, we have $|f(x)|\le C|x|+v(|x|)$ for all $x$. 
Also there exists $M,\mu$ such that $v(r)\le \mu r+M$ for all $r$. So $|f(x)|\le (C+\mu)|x|+M$. 

\begin{align*}
d(g\circ f(x),g\circ f(x')) \le & C'd(f(x),f(x'))+v(|f(x)|+|f(x')|)\\
	\le & C'Cd(x,x')+C'v(|x|+|x'|)+v((C+\mu)(|x|+|x'|)+2M)\\
\end{align*}
There exists $\lambda>0$ such that $v((C+\mu)r+2M)\le \lambda v(r)$ for all $r$. Therefore, for all $x,x'\in X$,
\[d(g\circ f(x),g\circ f(x')) \le C'Cd(x,x')+(C'+\lambda)v(|x|+|x'|).\]
Thus $g\circ f$ is $O(v)$-Lipschitz (with Lipschitz constant $CC'$).

Let us now prove the second assertion. Assume that $g,g'$ are $O(v)$-close, say $d(g(y),g'(y))\le v(|y|)$ and let us show that $g\circ f$ and $g'\circ f$ are $O(v)$-close. Then

\[d(g\circ f(x),g'\circ f(x)) \le  v(|f(x)|)
	\le  v((C+\mu)|x|+M)\le\lambda v(|x|)\]

Now assume that $f,f':X\to Y$ are $O(v)$-close, say $d(f(x),f'(x))\le v(|x|)$ (as we can assume multiplying $v$ by a positive constant if necessary), and let us show that $g\circ f$ and $g\circ f'$ are $O(v)$-close.

Then 
\begin{align*}
d(g\circ f(x),g\circ f'(x)) \le & C'd(f(x),f'(x))+v(|f(x)|+|f'(x)|)\\
	\le & C'v(|x|)+v(2(C+\mu)|x|+2M)\le (C'+\lambda')v(|x|),
\end{align*}
where $\lambda'$ is chosen so that $v(2(C+\mu)r+2M)\le\lambda'v(r)$ for all $r$.

The statements for $o(v)$ follow immediately.\end{proof}

\begin{defn}
Let $v$ be an admissible map. We say that a map $f:X\to Y$ is
\begin{itemize}
\item $O(v)$-expansive if there exist constants $C,C'>0$ such that $d(f(x),f(x'))\ge Cd(x,x')-C'v(|x|+|x'|)$ for all $x,x'\in X$;
\item $O(v)$-surjective if there exists a constant $C''>0$ such that $d(y,f(X))\le C''v(|y|)$ for all $y\in Y$. 
\item $o(v)$-expansive, respectively $o(v)$-surjective if there exists $u=o(v)$ such that $f$ is $O(u)$-expansive, resp.\ $O(u)$-surjective.
\end{itemize}
(By convention, $f$ is $O(v)$-expansive and $o(v)$-expansive if $X=\emptyset$, and the map $\emptyset\to Y$ is $O(v)$-surjective, resp.\ $o(v)$-surjective, if and only if $Y=\emptyset$.)
\end{defn}

\begin{prop}
Let $v$ be an admissible map. A map $f:X\to Y$ induces an isomorphism in the $O(v)$-category if and only if it satisfies:
\begin{itemize}
\item $f$ is $O(v)$-Lipschitz;
\item $f$ is $O(v)$-expansive;
\item $f$ is $O(v)$-surjective.
\end{itemize}
The same statement holds with $O(v)$ replaced with $o(v)$.
\end{prop}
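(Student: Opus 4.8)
The plan is to unwind the definition of isomorphism in the $O(v)$-category: a map $f:X\to Y$ is an isomorphism precisely when there is an $O(v)$-Lipschitz map $g:Y\to X$ with $g\circ f$ being $O(v)$-close to $\mathrm{id}_X$ and $f\circ g$ being $O(v)$-close to $\mathrm{id}_Y$ (well-definedness of composition and its compatibility with closeness being guaranteed by the preceding proposition). I would prove the two implications separately.

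For the implication ``isomorphism $\Rightarrow$ the three conditions'', $O(v)$-Lipschitzness of $f$ is part of being an arrow. Surjectivity is immediate: given $y$, the point $f(g(y))$ satisfies $d(y,f(g(y)))\le C''v(|y|)$ since $f\circ g$ is $O(v)$-close to the identity, so $d(y,f(X))\le C''v(|y|)$. For expansiveness I would feed $x,x'$ through $g\circ f$: writing $d(x,x')\le d(x,gf(x))+d(gf(x),gf(x'))+d(gf(x'),x')$, the outer terms are bounded by $C_1v(|x|)$ and $C_1v(|x'|)$ by closeness, while the middle term is bounded, using that $g$ is $O(v)$-Lipschitz, by $c_g d(f(x),f(x'))+C_g v(|f(x)|+|f(x')|)$. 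The radial bound $|f(x)|\le (C+\mu)|x|+M$ coming from $O(v)$-Lipschitzness (with $v(r)\le\mu r+M$), together with monotonicity and doubling of $v$, turns $v(|f(x)|+|f(x')|)$ into a bounded multiple of $v(|x|+|x'|)$; rearranging yields the expansiveness inequality. Note this direction uses only the radial bound of $f$ and is free of any constant obstruction.

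The substantial direction is ``three conditions $\Rightarrow$ isomorphism'', where I would construct a quasi-inverse by hand. Using $O(v)$-surjectivity, for each $y\in Y$ choose $g(y)\in X$ with $d(y,f(g(y)))\le 2C''v(|y|)$; by construction $f\circ g$ is $O(v)$-close to $\mathrm{id}_Y$. The first genuine step is a \emph{radial estimate} $|g(y)|\le A|y|+B$: comparing $g(y)$ with the base-point via $O(v)$-expansiveness and using $d(f(g(y)),f(x_0))\le 2C''v(|y|)+|y|+|f(x_0)|$ gives $c|g(y)|\le(\text{linear in }|y|)+C'v(|g(y)|)$, and absorbing the term $C'v(|g(y)|)$ into the left-hand side produces the linear radial bound. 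Granted this, that $g$ is $O(v)$-Lipschitz is routine: for $y,y'$ one bounds $d(g(y),g(y'))$ by expansiveness in terms of $d(f(g(y)),f(g(y')))\le d(y,y')+2C''(v(|y|)+v(|y'|))$ and of $v(|g(y)|+|g(y')|)$, the latter controlled by $v(|y|+|y'|)$ through the radial bound and doubling. The same radial bound makes $g\circ f$ be $O(v)$-close to $\mathrm{id}_X$: with $x'=g(f(x))$ one has $d(f(x),f(x'))\le 2C''v(|f(x)|)\le C_2 v(|x|)$, and expansiveness plus the now-linear control of $|x'|$ in terms of $|x|$ gives $d(gf(x),x)\le C_3 v(|x|)$.

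The main obstacle is exactly the radial estimate, that is, absorbing $C'v(|g(y)|)$ into $c|g(y)|$: this is where admissibility of $v$ is essential, since one needs $v(r)$ to be negligible against $r$ for large $r$ (sublinearity, or an at-most-linear growth constant dominated by the expansiveness constant) so that the term can be moved to the left. Once the radial control is secured, every remaining estimate is a mechanical application of monotonicity and doubling, each time chaining $v$ of a quantity linear in $|y|$ (or $|x|$) back to a bounded multiple of $v(|y|)$ (resp.\ $v(|x|)$). Finally, the $o(v)$ statement follows formally: the three $o(v)$-conditions are, by definition, the three $O(u)$-conditions for witnesses $u_i=o(v)$, so taking $u=\max_i u_i=o(v)$ (still admissible) the $O(u)$-case applies and produces an $O(u)$-isomorphism, hence an $o(v)$-isomorphism; conversely an $o(v)$-isomorphism is an $O(u)$-isomorphism for a single $u=o(v)$ absorbing the finitely many witnesses for $f$, $g$, and the two closeness relations, and the $O(u)$-case gives the three $O(u)\subseteq o(v)$-conditions.
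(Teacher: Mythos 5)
Your proposal is correct and follows essentially the same route as the paper: the converse direction constructs $g$ by $O(v)$-surjectivity, derives the linear radial bound $|g(y)|\le A|y|+B$ from expansiveness with the same absorption step (which the paper performs as $Cr-v(r)\ge Cr/2-k$, and which you rightly flag as the point needing $v$ dominated by the expansion constant), and then deduces Lipschitzness of $g$ and closeness of both composites via expansiveness, monotonicity and doubling. Your extra detail in the forward direction and in the $o(v)$ reduction only fills in steps the paper declares straightforward.
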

\begin{proof}
If $f$ induces an isomorphism in the $O(v)$-category, with inverse induced by $g:Y\to X$, all conditions are straightforward to check.

Conversely, suppose that $f$ satisfies all these conditions. Leaving the trivial case $X=\emptyset$ to the reader, we assume the contrary, so $Y\neq\emptyset$ as well. Besides, we can suppose $v\ge 1$, $C''=1/2$ and $C'=1$, up to changing $v$ to $\max(v,1)$ and then multiplying $v$ by a constant $\ge 1$. For each $y\in Y$, choose $g(y)\in X$ such that $d(f(g(y)),y)\le v(|y|)$.

By the second condition, we have $|f(x)|\ge C|x|-v(|x|)$ for all $x\in X$. There exists a constant $k$ such that $Cr-v(r)\ge Cr/2-k$ for all $r$. Hence $|f(g(y)|\ge C|g(y)|/2-k$, that is, $C|g(y)|/2\le k+|f(g(y)|\le k+|y|+v(|y|)$. Hence $|g(y)|\le c|y|+k'$ for suitable constants $c,k'>0$ and all $y\in Y$; in turn there exists a constant $c'>0$ such that $v(2cr+2k')\le c'v(r)$ for all $r$.

Again by the second condition, for all $y,y'\in Y$ we have
\[d(f(g(y)),f(g(y')))\ge Cd(g(y),g(y'))-v(|g(y)|+|g(y')|).\]
So 
\begin{align*}
Cd(g(y),g(y'))\le &  d(f(g(y)),f(g(y')))+v(2c|y|+2k')\\
\le & d(y,y')+v(|y|)+v(|y'|)+v(2c|y|+2k')\\
\le & d(y,y')+3v(|y|+|y'|).
\end{align*}

This shows that $g$ is $O(v)$-Lipschitz. It is clear by construction that $f\circ g$ is $O(r)$-close to the identity of $Y$. By the second condition,

\[d(f(g(f(x))),f(x))\ge Cd(g(f(x)),x)-v(|x|+|g(f(x))|)\]
In addition, we have
$d(f(g(f(x))),f(x))\le v(|f(x)|)\le c''v(|x|)$ for some suitable constant $c''$, and $|g(f(x))|=O(|x|)$ as well, se we can deduce that $Cd(g(f(x)),x)=O(v(|x|))$.

The $o(v)$-case follows (from the $O(v')$-case for all $v'$).
\end{proof}

\begin{defn}
An $O(v)$-Lipschitz map $g:Y\to X$ is an $O(v)$-retract if it is a retract in the $O(v)$-category, that is, there exists an $O(v)$-Lipschitz map $f:X\to Y$ such that $g\circ f$ is $O(v)$-close to the identity of $X$. Similarly one defines $o(v)$-retract, and $o(r)$-retract is also referred to sublinearly Lipschitz retract.
\end{defn}

\begin{ex}
Let $G$ be a simply connected solvable Lie group, and $E$ its exponential radical (the set of exponentially distorted elements in $G$). Then $G/E$ is an SBE-retract of $G$, by \cite[Theorem 4.4]{CoI}; in substance this was the main argument in the dimension estimate of the preceding paper \cite{CoJT}. Note that this is only interesting in the non-split case, since $G/E$ is indeed a Lipschitz retract when $G\sim E\rtimes (G/E)$.
\end{ex}

\section{Growth}\label{growth_ame}

Let us say that a metric space is uniformly locally finite (ULF) if the supremum of all cardinals of $n$-balls in $X$ is bounded. This is a mild reasonable assumption to consider growth and amenability conditions. Connected graphs of bounded degree are ULF.

\begin{ex}\label{counter_tre}
In a reasonable generality (e.g., connected graphs of bounded valency), the asymptotics of growth and amenability are quasi-isometric invariants. We begin with two simple counterexamples to discard naive generalizations to the SBE-setting and motivate the sequel. Fix any sublinear function $u$.
\begin{enumerate}
\item\label{polgr_ni}
For connected graphs of bounded valency, polynomial growth is not a SBE-invariant: for instance, construct a tree $T$ from a ray $R=\{x_n:n\ge 0\}$ by attaching to each $x_n$ a rooted 2-tree $T_n$ of depth $u(n)$. The map retracting $T$ to $R$ mapping $T_n$ to $x_n$ is an SBE (an $O(u)$-SBE) $T\to R$ and $R$ has polynomial (linear) growth. If $\lim_{n\to\infty}u(n)/\log(n)=\infty$, then $2^{u(n)}$ grows superpolynomially and hence $T_n$ does not have polynomially bounded growth: it can have arbitrary large subexponential growth.
\item\label{camesbi} For connected graphs of bounded valency, amenability is not an SBE-invariant. Perform a very similar construction as in (\ref{polgr_ni}), but instead of attaching bushy small trees to a infinite thin tree, we attach filiform trees to a infinite bushy tree. Precisely, we perform the same construction, but start with an infinite rooted tree $R'$ of valency 3, and attaching at every vertex of height $n$ a ray of length $u(n)$. Then the resulting tree $T'$ is amenable, its obvious retraction to $R'$ is an SBE (an $O(u)$-SBE), and $R'$ is not amenable.
\end{enumerate}
\end{ex}

Let us say that a pointed metric space is doubling at large scale if there exists $M$ and $R_0$ such that for every $R\ge R_0$ every closed centered $2R$-ball is contained in the union of $M$ closed $R$-balls.
I expect the following fact to be well-known:

\begin{prop}
A pointed metric space is doubling at large scale if and only if all its asymptotic cones (on the basepoint) are proper.
\end{prop}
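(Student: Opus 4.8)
The plan is to prove both implications through total boundedness, using the standard fact that an asymptotic cone is always complete, so that properness of a cone is equivalent to total boundedness of its closed balls; moreover, since every bounded subset of the cone lies in a ball centred at the base-point image $o$, it suffices to treat the balls $\bar B(o,\rho)$. Throughout I would use the elementary comparison between covering and packing (separation) numbers: if a ball cannot be covered by $N$ balls of radius $R$, then it contains at least $N+1$ points that are pairwise more than $R$ apart. Indeed, choosing such points greedily, if the process stops after $t$ of them then the balls of radius $R$ about them cover the original ball, so failure to cover by $N$ balls forces $N+1$ well-separated points.

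\emph{Forward direction (doubling $\Rightarrow$ all cones proper).} First I would iterate the doubling property, applying it to each ball of the covering produced at the previous stage; this yields a function $N(\lambda,\eps)$ such that for every $r\ge R_0$, every centre $x$, and every $\lambda\ge 1$, $0<\eps\le 1$ with $\eps r\ge R_0$, the ball $\bar B(x,\lambda r)$ is covered by $N(\lambda,\eps)$ balls of radius $\eps r$ (concretely $N(\lambda,\eps)=M^{\lceil\log_2(\lambda/\eps)\rceil}$ works). Now fix an ultrafilter $\omega$ and a scaling sequence $s_n\to\infty$, and fix $\rho,\eps>0$. For $\omega$-large $n$ one has $\eps s_n\ge R_0$, so $\bar B(x_0,(\rho+1)s_n)$ is covered by $K:=N(\rho+1,\eps)$ balls $\bar B(z^n_1,\eps s_n),\dots,\bar B(z^n_K,\eps s_n)$. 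Any point of $\bar B(o,\rho)$ is represented by a sequence lying in $\bar B(x_0,(\rho+1)s_n)$ for $\omega$-large $n$, hence lying in one of these $K$ balls; partitioning the indices $n$ according to such a ball, exactly one index $i$ occurs on an $\omega$-large set, so the point is within $\eps$ of the cone-point $[(z^n_i)_n]$. Thus the $K$ cone-points $[(z^n_i)_n]$ form an $\eps$-net of $\bar B(o,\rho)$; since $\eps$ is arbitrary, $\bar B(o,\rho)$ is totally bounded, hence compact, and the cone is proper.

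\emph{Converse (all cones proper $\Rightarrow$ doubling), by contraposition.} Suppose $X$ is not doubling at large scale. Then for every $n$ there is a radius $R_n\ge n$ with $\bar B(x_0,2R_n)$ not coverable by $n$ balls of radius $R_n$, so by the greedy argument it contains points $p^n_1,\dots,p^n_{n+1}$ pairwise more than $R_n$ apart. Take $s_n:=R_n\to\infty$ and any non-principal $\omega$. For each $j\ge 1$ set $q_{j,n}:=p^n_j$ when $n\ge j-1$ and $q_{j,n}:=x_0$ otherwise; since $d(q_{j,n},x_0)\le 2R_n=2s_n$, the sequence $(q_{j,n})_n$ defines a cone-point $\eta_j$ with $d(o,\eta_j)\le 2$. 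For $j\ne j'$ and $\omega$-large $n$ one has $d(q_{j,n},q_{j',n})/s_n>R_n/s_n=1$, so $d(\eta_j,\eta_{j'})\ge 1$. Hence $\{\eta_j:j\ge 1\}$ is an infinite $1$-separated subset of $\bar B(o,2)$, which is therefore not totally bounded, and the cone at $(s_n)$ is not proper.

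The routine parts are the completeness of the cone and the covering/packing comparison. The main point to get right is the iteration in the forward direction: it produces bounded covering numbers only because the doubling hypothesis covers balls of \emph{every} centre, not merely those centred at the base-point, and a base-point-only version would be insufficient. Indeed, a ray carrying at distance $2^n$ a bouquet of $n$ leaves pairwise at distance $2^n/10$ is doubling for base-point-centred balls yet has a non-proper cone at the scales $s_n=2^n$. The second delicate point is the ultrafilter bookkeeping (the pigeonhole step and the diagonal choice of the $\eta_j$), which must be arranged so that each inequality holds $\omega$-almost everywhere.
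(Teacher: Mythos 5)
Your proposal follows the paper's own strategy in both directions (transfer of coverings to the cone plus completeness for the forward half; contraposition, packing, and a diagonal choice of separated cone points for the converse), and each half is correct \emph{in its own terms}. The genuine gap is that the two halves do not use the same hypothesis, so they do not assemble into the stated equivalence --- and you yourself supply the example showing this matters. Your forward direction uses, and as you correctly insist must use, doubling for balls with \emph{arbitrary} centres: the iteration producing $N(\lambda,\eps)$ applies the hypothesis to the non-centered balls of the previous covering. But your converse negates the \emph{base-point-only} property: from ``not doubling'' you immediately extract bad balls $\bar B(x_0,2R_n)$ centred at $x_0$. If ``doubling at large scale'' is the all-centres notion, that step is a non sequitur --- the negation only yields bad balls $\bar B(x_n,2R_n)$ with unrestricted centres, and when $d(x_0,x_n)\gg R_n$ these leave no trace in any cone based at the base-point. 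Indeed the converse implication is then false: attach to the ray $[0,\infty)$, at the point $4^n$, a bouquet of $n$ segments of length $2^n$. All-centres doubling fails (the ball of radius $2^n$ about the attachment point contains $n$ leaves pairwise $2^{n+1}$ apart, so needs $n$ balls of radius $2^{n-1}$), yet every point $p$ of this space lies within $\sqrt{d(x_0,p)}$ of the ray, so every asymptotic cone on the base-point is isometric to $[0,\infty)$, hence proper.

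If instead ``doubling at large scale'' is the base-point-centred notion --- which is what the paper's definition literally says, and what the paper's converse (like yours) negates --- then your converse is fine but the forward implication is \emph{false}, by your own bouquet example: the ray with $n$ leaves of size $2^n/20$ at the point $2^n$ is base-point doubling with $M=2$, yet its cone at the scales $2^n$ contains infinitely many $1/10$-separated points in a bounded ball. You present this example as a side remark justifying the all-centres hypothesis, without noticing that it contradicts the very equivalence being proved under the paper's definition. So under either consistent reading one of your two implications fails; what your argument actually establishes is the chain ``all-centres doubling $\Rightarrow$ all base-point cones proper $\Rightarrow$ base-point doubling'', whose endpoints your example shows to be inequivalent. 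I note that the paper's own proof carries the same tension: its converse is the base-point argument, while its forward direction rests on the assertion that a complete doubling \emph{pointed} metric space is proper, which for the pointed notion is refuted by the cone of your example (that cone is complete, base-point doubling at every scale with $M=2$, and not proper). The condition that is genuinely equivalent to properness of all base-point cones is the intermediate one: for every $\eps>0$ there exist $N_\eps$ and $R_\eps$ such that for all $R\ge R_\eps$ the centered $R$-ball is covered by $N_\eps$ balls of radius $\eps R$. With that as the definition, both of your arguments go through essentially verbatim.
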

\begin{proof}
In the definition, say that the pointed metric space $(X,d)$ ($d$ being the distance) is $(R_0,M)$-doubling. If so, this implies that $(X,\frac1{n}d)$ is $(R_0/n,M)$-doubling. It follows that every asymptotic cone of $X$ is $(\eps,M)$ doubling for every $\eps$, which is the definition of being doubling (as a pointed metric space). A complete doubling pointed metric space is proper: this follows from the fact that a complete metric space is compact if and only if for every $\eps>0$ it can be covered by finitely many $\eps$-balls \cite[\S 2.7, Prop.\ 10]{Bou}. Since asymptotic cones are complete, this proves one implication.

Conversely suppose that $X$ is not large-scale doubling as a pointed metric space. Then for all $n$ there exists some centered ball of arbitrary large radius, say of radius $2R_n\ge n$, that cannot be covered by less that $n$ balls of radius $R_n$. Hence in $(X,\frac{1}{R_n}d)$, the centered 2-ball cannot be covered by less that $n$ balls of radius 1. Thus it contains $n$ points at pairwise distance $\ge 1$. It follows that for any non-principal ultrafilter, the cone for the scaling sequence $(1/R_n)$ over this ultrafilter, has infinitely (actually at least continuum) many points that are at pairwise distance $\ge 1$; hence it is not proper.
\end{proof}

Since SBE spaces have bilipschitz equivalent asymptotic cones, the following corollary follows.

\begin{cor}Being large-scale doubling (as pointed metric spaces) is an SBE-invariant.\end{cor}

\begin{rem}
For a uniformly locally finite space (or more generally, a uniformly coarsely proper metric space, with a suitable notion of growth, see \cite[\S 3.D]{CH}), large-scale doubling (as pointed metric space) implies polynomially bounded growth. The converse is not true in general. For instance, consider a rooted tree in which the vertices in the $k$-sphere have 2 successors for $2^{n^2}-n<k<2^{n^2}$ and all integers $n$, while all others vertices have a single successor. Then it has polynomially bounded growth, but is not large-scale doubling.
\end{rem}

However a compactly generated, locally compact group is large-scale doubling if and only if it has polynomially bounded growth (the forward implication is immediate but the reverse implication makes use of deep theorems, including Gromov's polynomial growth). So we deduce:

\begin{cor}
Among compactly generated locally compact groups, and hence among finitely generated groups or transitive connected locally finite graphs, polynomial growth is an SBE-invariant.
\end{cor}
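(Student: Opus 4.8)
The plan is to reduce the statement entirely to the SBE-invariance of large-scale doubling, which is the content of the preceding corollary. Concretely, it suffices to establish, for a compactly generated locally compact group $G$ endowed with the word metric associated to a compact generating subset, the equivalence
\[
G\text{ has polynomially bounded growth}\iff G\text{ is doubling at large scale.}
\]
Granting this, polynomial growth inherits the SBE-invariance of large-scale doubling. The extensions in the statement require no new work: finitely generated groups are the discrete special case, while a transitive connected locally finite graph carries a geometric action of its (locally compact) automorphism group and is therefore SBE---indeed quasi-isometric---to a compactly generated locally compact group, a passage under which both growth type and the SBE relation are preserved.

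The implication ``doubling at large scale $\Rightarrow$ polynomial growth'' is the easy direction and requires no structure theory: with the word metric $G$ is uniformly coarsely proper, so the volume-counting estimate of the Remark above applies and yields a polynomial bound on growth.

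The converse, ``polynomial growth $\Rightarrow$ doubling at large scale,'' is the crux and the place where deep input is unavoidable, since the tree of the Remark shows that a polynomial upper bound $\mathrm{vol}(B_R)\le CR^d$ alone does not force large-scale doubling; it is the group structure that saves the day. I would argue through the Proposition just proved, which reduces the goal to showing that every asymptotic cone of $G$ is proper. Here polynomial growth, via Gromov's theorem in its locally compact form (after Losert), makes $G$ virtually nilpotent up to a compact kernel, and then Pansu's theorem identifies each asymptotic cone with the associated Carnot group equipped with a Carnot--Caratheodory metric. Such a cone is a connected locally compact geodesic space, hence proper, and the Proposition delivers large-scale doubling. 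Homogeneity of $G$ removes any dependence on the basepoint.

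An equivalent route, avoiding asymptotic cones, is to combine Gromov's theorem with the Bass--Guivarch--Pansu asymptotics to obtain the two-sided estimate $\mathrm{vol}(B_R)\asymp R^d$, hence the uniform volume-doubling bound $\mathrm{vol}(B_{2R})\le C'\,\mathrm{vol}(B_R)$ for all $R$; note that this two-sided control, not mere polynomial growth, is exactly what the deep theorem supplies. A standard packing argument then converts volume-doubling into the covering form of large-scale doubling: since left-invariance of Haar measure gives all balls of a fixed radius equal volume, a maximal $R$-separated subset of a centered $2R$-ball has cardinality bounded in terms of $C'$ (compare the disjoint $R/2$-balls about these points with a concentric $3R$-ball), and the corresponding $R$-balls then cover the $2R$-ball. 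This final step is routine; all the difficulty is concentrated in the appeal to Gromov's polynomial growth theorem.
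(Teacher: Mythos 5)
Your proposal is correct and follows the same route as the paper: reduce to the SBE-invariance of large-scale doubling (the preceding corollary, via properness of asymptotic cones), and then invoke the equivalence, for compactly generated locally compact groups, between polynomial growth and large-scale doubling, whose nontrivial direction rests on Gromov's polynomial growth theorem. The paper states this equivalence without proof (``the forward implication is immediate but the reverse implication makes use of deep theorems''), so your only addition is to supply details --- via Gromov--Losert plus Pansu, or via the volume asymptotics and a packing argument --- for a step the paper simply asserts.
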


Given a pointed metric space $X$, let $b_X(n)$ be the cardinal of the closed centered ball of radius $n$, and $b^\mathrm{u}_X(n)$ the supremum of cardinals of all closed balls of radius $n$.

We say that $X$ has uniform exponentially bounded (UEB) growth if $\overline{\lim} b^\mathrm{u}_X(n)^{1/n}=1$. This is a mild assumption, as this holds if $X$ is a connected graph of bounded valency.

\begin{prop}\label{sub_sbe}
Let $X,Y$ be metric spaces with basepoints. Let $u$ be a non-decreasing sublinear function and $f:X\to Y$ be a map such that for some $R_0,C>0$, we have
\[|f(x)|\le \max(|x|,R_0),\quad d(f(x),f(x'))\ge Cd(x,x')-u(|x|\vee |x'|),\quad\forall x,x'\in X.\]

Then for all $n\ge R_0$ we have $b_Y(n)\ge b_X(n)/b_X(u(n)/C)$.

In particular, if $X$ has UEB growth and $Y$ has subexponential growth, then $X$ also has subexponential growth. In particular, among UEB growth metric spaces (e.g., connected graphs of bounded valency), having subexponential growth is an SBE-invariant.
\end{prop}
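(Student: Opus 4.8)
The proof rests on a fiber-counting (pigeonhole) argument, and the plan is to first extract two consequences of the hypotheses at the level of balls. Fix $n\ge R_0$. The radial bound $|f(x)|\le\max(|x|,R_0)$ shows that $f$ sends the centered ball $B_X(n)=\{x:|x|\le n\}$ into $B_Y(n)$, since $|x|\le n$ forces $|f(x)|\le\max(n,R_0)=n$. The expansiveness inequality then controls the fibers: if $f(x)=f(x')$ with $x,x'\in B_X(n)$, then $0=d(f(x),f(x'))\ge Cd(x,x')-u(|x|\vee|x'|)\ge Cd(x,x')-u(n)$, because $u$ is non-decreasing and $|x|,|x'|\le n$; hence $d(x,x')\le u(n)/C$. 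Thus every nonempty fiber $f^{-1}(y)\cap B_X(n)$ has diameter at most $u(n)/C$, so it sits inside a ball of radius $u(n)/C$ about any of its points and has at most $b^{\mathrm u}_X(u(n)/C)$ elements.

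Partitioning $B_X(n)$ into fibers over its image then yields the inequality:
\[b_X(n)=\sum_{y\in f(B_X(n))}\bigl|f^{-1}(y)\cap B_X(n)\bigr|\le \bigl|f(B_X(n))\bigr|\cdot b^{\mathrm u}_X(u(n)/C)\le b_Y(n)\,b^{\mathrm u}_X(u(n)/C).\]
This is the asserted bound (the denominator is really $b^{\mathrm u}_X$; this is harmless, as only $b^{\mathrm u}_X$ enters the asymptotic conclusions, and $b^{\mathrm u}_X=b_X$ for homogeneous $X$).

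For the subexponential-growth statement I would take $n$-th roots: $b_X(n)^{1/n}\le b_Y(n)^{1/n}\,b^{\mathrm u}_X(u(n)/C)^{1/n}$. Subexponential growth of $Y$ gives $\overline{\lim}\,b_Y(n)^{1/n}=1$. For the correction factor, UEB growth of $X$ means that for every $\eps>0$ one has $b^{\mathrm u}_X(m)\le e^{\eps m}$ for all large $m$; since $u$ is sublinear, $u(n)/(Cn)\to 0$, whence $b^{\mathrm u}_X(u(n)/C)^{1/n}\le e^{\eps u(n)/(Cn)}\to 1$ (if $u$ is bounded this factor tends to $1$ trivially). Therefore $\overline{\lim}\,b_X(n)^{1/n}\le 1$, i.e. $X$ is subexponential. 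The crux — and the one place where both hypotheses are genuinely used — is precisely this interplay: the distortion scale $u(n)/C$ must be large enough to absorb each fiber, yet sublinear enough that its exponential-type contribution disappears after taking $n$-th roots.

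Finally, for SBE-invariance I would reduce an SBE $f:X\to Y$ to the above setting. Being $O(v)$-Lipschitz and $O(v)$-expansive for some $v=o(r)$, it satisfies $|f(x)|=O(|x|)$ and $d(f(x),f(x'))\ge c'd(x,x')-u(|x|\vee|x'|)$ with $u$ a suitable sublinear multiple of $v$. The radial bound is now linear rather than with constant $1$, so $f$ maps $B_X(n)$ into $B_Y(C''n)$; the same count gives $b_X(n)\le b_Y(C''n)\,b^{\mathrm u}_X(u(n)/c')$, and the root estimate still closes because $b_Y(C''n)^{1/n}=\bigl(b_Y(C''n)^{1/(C''n)}\bigr)^{C''}\to 1$ when $Y$ is subexponential. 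As the inverse of an SBE is again an SBE up to sublinear closeness, running the argument with $X$ and $Y$ interchanged gives the converse, so subexponential growth is an SBE-invariant among UEB-growth spaces. The only real care needed is the bookkeeping that turns the $O(v)$-conditions into a linear radial bound and a sublinear $u$; the mechanism is the fiber count above.
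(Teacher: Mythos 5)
Your proof is correct and follows essentially the same route as the paper's: map the $n$-ball of $X$ into the $n$-ball of $Y$ via the radial bound, use the expansiveness inequality to bound each fiber of $f$ inside that ball by $b^{\mathrm{u}}_X(u(n)/C)$, divide, and then let the UEB hypothesis together with sublinearity of $u$ absorb the correction factor. Your side remarks (that the denominator is really $b^{\mathrm{u}}_X$ rather than $b_X$, and the rescaling bookkeeping $b_Y(C''n)^{1/n}\to 1$ needed to reduce a general SBE to the stated hypotheses) only make explicit what the paper's own proof leaves implicit.
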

\begin{proof}
Fix $n\ge R_0$. Then the $n$-ball of $X$ maps into the $n$-ball of $Y$. If $x,x'$ belong to the $n$-ball and have the same image, then the second inequality implies that $Cd(x,x')\le u(n)$. In particular, $f(x)=f(x')$ implies that $x'$ belongs to the ball of radius $u(n)/C$ around $x$. So the cardinal of the fibers of $f$ contained in the $n$-ball is $\le b^\mathrm{u}_X(u(n)/C)$. Thus the cardinal of the image of the $n$-ball is $\ge b_X(n)/b^\mathrm{u}_X(u(n)/C)$.

If $X$ has UEB growth, write $b_X^\mathrm{u}(n)\le\exp(\alpha n)$ (say for $n\ge R_1\ge R_0$). Then we deduce that $b_X(n)\le\exp(\alpha u(n)/C)b_Y(n)$, where the right-hand term grows subexponentially.
\end{proof}

\begin{cor}\label{sub_SBE}
For finitely generated groups (and more generally, compactly generated locally compact groups) subexponential growth is an SBE-invariant.\qed
\end{cor}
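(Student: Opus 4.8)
The plan is to deduce this directly from Proposition \ref{sub_sbe}, whose conclusion already handles UEB metric spaces; the only work is to verify that compactly generated locally compact groups satisfy the UEB hypothesis and that an arbitrary SBE can be normalized to meet the radial hypothesis of that proposition.

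First I would record that such groups have UEB growth. Fixing a compact generating set and the associated word metric (or, in the discrete case, the Cayley graph), left translations act by isometries, so $b^{\mathrm u}_X(n)=b_X(n)$; since the growth of a compactly generated locally compact group is at most exponential, we get $b^{\mathrm u}_X(n)\le\exp(\alpha n)$ for some $\alpha$, which is exactly UEB. In the non-discrete case one first replaces $G$ by a uniformly discrete cobounded net, which is quasi-isometric to $G$, hence SBE to it, and which inherits UEB growth by the same homogeneity argument; since subexponential growth is a quasi-isometry invariant, this reduction is harmless.

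Next, let $f\colon G\to H$ be an SBE with $H$ of subexponential growth. By definition there is an admissible $v=o(r)$ for which $f$ is $O(v)$-Lipschitz and $O(v)$-expansive (surjectivity is not needed here). The $O(v)$-Lipschitz condition gives a radial bound $|f(x)|\le c|x|+Cv(|x|)+O(1)$, and the expansiveness gives $d(f(x),f(x'))\ge c'd(x,x')-C'v(|x|+|x'|)$; using that $v$ is doubling, $v(|x|+|x'|)\le\lambda\,v(|x|\vee|x'|)$, so after renaming $u=\lambda C' v$ the expansiveness takes the form required by Proposition \ref{sub_sbe}, with $u$ non-decreasing and sublinear.

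The one genuine point is to force the radial coefficient down to $1$, which I would do by rescaling the metric on the target. Choosing any $c'>c$, sublinearity of $v$ gives an $R_0$ with $Cv(|x|)+O(1)\le(c'-c)|x|$ for $|x|\ge R_0$, so $|f(x)|\le c'|x|$ there; passing to the rescaled metric $d_H/c'$ turns this into $|f(x)|\le\max(|x|,R_0')$ for a suitable $R_0'$, while the expansiveness and the sublinear error rescale by the same constant and keep their shape. A constant rescaling multiplies all radii by a fixed factor, hence preserves subexponential growth of $H$ and leaves the UEB property of $G$ untouched. Now Proposition \ref{sub_sbe} applies and yields that $G$ has subexponential growth. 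Since an SBE is an isomorphism in the sublinearly Lipschitz category, it admits an inverse SBE $H\to G$, so running the same argument in the other direction gives the reverse implication; thus subexponential growth is an SBE-invariant. The hard part is exactly this normalization of the Lipschitz constant together with checking that it does not disturb the growth hypotheses — everything else is the homogeneity observation and bookkeeping with the doubling function $v$.
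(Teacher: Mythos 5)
Your proof is correct and follows essentially the same route as the paper, which states the corollary with a bare \qed as an immediate consequence of Proposition \ref{sub_sbe}. The points you elaborate --- normalizing the radial constant to $1$ by rescaling the target metric, checking UEB via homogeneity of the word metric, converting $v(|x|+|x'|)$ to $u(|x|\vee|x'|)$ by doubling, and reducing the non-discrete case to a uniformly discrete cobounded net --- are precisely the routine verifications the paper leaves implicit, and you carry them out correctly.
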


\begin{que}
Does there exist two SBE finitely generated groups (or, more generally, compactly generated locally compact groups) that have non-equivalent growth? Such groups would necessarily be of intermediate growth.
\end{que}

Proposition \ref{sub_sbe} also yields some quantitative statements:
\begin{cor}
1) The class of UEB growth metric spaces of polynomial growth is $O(\log r)$-SBE-invariant. 

2) For every $\alpha>1$, the class of UEB growth metric spaces (and hence of groups) whose growth is $O(\exp(r^{\alpha}))$ is $O(r^{\alpha})$-SBE-invariant. In particular, the union of all these classes (for $\alpha<1$) is $O(r^{1-\eps})$-SBE-invariant for all $\eps>1$.\qed
\end{cor}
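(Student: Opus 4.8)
The plan is to deduce both parts from a single growth inequality coming from Proposition \ref{sub_sbe}, read off against two different choices of the control function $v$ after passing to logarithms. Write $\beta_X=\log b_X$ and $\beta^{\mathrm u}_X=\log b^{\mathrm u}_X$, and likewise for $Y$; recall that UEB growth of a space $Z$ means exactly $\beta^{\mathrm u}_Z(m)=o(m)$.

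First I would match the hypotheses. Given an $O(v)$-SBE $f\colon X\to Y$, with $v$ admissible and $v=o(r)$, its inverse SBE $g\colon Y\to X$ is again $O(v)$-expansive and $O(v)$-Lipschitz. The expansiveness $d(g(y),g(y'))\ge Cd(y,y')-C'v(|y|+|y'|)$ rewrites, using the doubling property of $v$, as $d(g(y),g(y'))\ge Cd(y,y')-u(|y|\vee|y'|)$ with $u=\Theta(v)$ non-decreasing and sublinear, while the Lipschitz bound gives a radial control $|g(y)|\le C_1|y|+C_1'$. The multiplicative constant $C_1$ only replaces the left-hand side of Proposition \ref{sub_sbe} by $b_X(C_1n+C_1')$, so the counting argument of that proposition (applied to $g$) yields
\[ b_Y(n)\ \le\ b_X(C_1n+C_1')\cdot b^{\mathrm u}_Y(u(n)/C),\]
that is, after taking logarithms,
\[ \beta_Y(n)\ \le\ \beta_X(C_1n+C_1')+\beta^{\mathrm u}_Y(u(n)/C).\]

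The second term is controlled by UEB growth: since $u(n)/C\to\infty$ and $\beta^{\mathrm u}_Y(m)=o(m)$, we get $\beta^{\mathrm u}_Y(u(n)/C)=o(u(n))=o(v(n))$, which is negligible. It then suffices to insert the two choices of $v$. For part (1), $v(r)=\log r$ and polynomial growth of $X$ give $\beta_X(C_1n+C_1')=O(\log n)$, whence $\beta_Y(n)=O(\log n)+o(\log n)=O(\log n)$, so $Y$ has polynomial growth. For part (2), $v(r)=r^\alpha$ and $\beta_X(C_1n+C_1')=O(n^\alpha)$ give $\beta_Y(n)=O(n^\alpha)+o(n^\alpha)=O(n^\alpha)$, so $Y$ has growth $O(\exp(r^\alpha))$. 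Exchanging the roles of $X$ and $Y$ (applying the same estimate to $f$ rather than $g$) gives the reverse implication, so each class is SBE-invariant. For the final clause, if $X$ has growth $O(\exp(r^\alpha))$ for some $\alpha<1$ and $Y$ is $O(r^{1-\eps})$-SBE to $X$, the same inequality gives $\beta_Y(n)=O(n^\alpha)+o(n^{1-\eps})=O(n^{\max(\alpha,1-\eps)})$ with $\max(\alpha,1-\eps)<1$, placing $Y$ in the union.

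I expect the only real friction to be bookkeeping rather than mathematics: fitting the definition of an $O(v)$-SBE to the hypotheses of Proposition \ref{sub_sbe}, namely absorbing the multiplicative constant in the radial control (which forces the harmless shift $n\mapsto C_1n+C_1'$, to which the logarithms of a polynomial and of $\exp(r^\alpha)$ are insensitive) and using the doubling property to pass from $v(|y|+|y'|)$ to $v(|y|\vee|y'|)$. Everything else is the single displayed inequality compared against the two target growth rates, the crucial point being always that the UEB hypothesis makes $\beta^{\mathrm u}_Y(O(v(n)))$ sublinear in $v(n)$, hence swallowed by the main term $\beta_X(O(n))$.
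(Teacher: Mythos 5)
Your proposal is correct and is essentially the paper's own argument: the paper offers no separate proof (the corollary is stated as an immediate consequence of Proposition \ref{sub_sbe}), and what you wrote is exactly that deduction --- apply the counting inequality of Proposition \ref{sub_sbe} to the inverse SBE with $v(r)=\log r$, resp.\ $v(r)=r^{\alpha}$, absorb the UEB correction $b^{\mathrm{u}}_Y(u(n)/C)$ (whose logarithm is $o(v(n))$) into the main term, and observe that the multiplicative radial constant (the shift $n\mapsto C_1n+C_1'$) and the doubling adjustment from $v(|y|+|y'|)$ to $u(|y|\vee|y'|)$ are harmless. The bookkeeping you flag is precisely what the paper leaves implicit, so there is nothing to add.
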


Amenability is not an SBE-invariant, nor even an $O(u)$-SBE invariant of connected graphs of bounded degree, for any function $u$ tending to infinity.

\begin{que}\label{ame_SBE}
Is amenability a SBE-invariant of finitely generated groups (and more generally unimodular compactly generated locally compact groups)?
\end{que}

(Recall that in the nondiscrete case, the good setting is that of amenable unimodular groups, also called metrically amenable groups, see \cite[\S 3.D]{CH}.)

In case of a negative answer to Question \ref{ame_SBE} one can wonder whether there are strengthenings of amenability that hold for natural instances (e.g., all amenable unimodular connected Lie groups) that are SBE-invariants. A partial answer is given by the class of groups with subexponential growth. 
But it would be interesting to have a result encompassing some amenable groups of exponential growth such as polycyclic groups, which have good quantitative amenability properties.

%%%%%%%%%%%%%%%%%%%%%%%%%%%%%%%%%%%%%%%%%%%%%%%%%%%%%%%%%%%%
%%%%%%%%%%%%%%%%%%%%%%%%%%%%%%%%%%%%%%%%%%%%%%%%%%%%%%%%%%%%
%%%%%%%%%%%%%%%%%%%%%%%%%%%%%%%%%%%%%%%%%%%%%%%%%%%%%%%%%%%%

\section{Ends and Hyperbolicity}

\subsection{Ends of spaces}\label{ss_ends}

Fix a geodesic metric space $X$. Assuming that $X$ is non-empty, fix a base-point. We consider the set of proper paths $(x_t)$ in $X$. Here proper means that $|x_t|$, the distance of $x_t$ to the base-point, tends to infinity, and path just means that $x_t$ depends continuously on $t$.

The forking between two such paths $(x_t)$ and $(x'_t)$ is the number $\phi((x_t),(x'_t))\in\R_+\cup\{\infty\}$ defined as the supremum of $r$ such that for all large $t$, there exists a path from $x_t$ to $x'_t$ avoiding the open $r$-ball (``for all large" can be replaced with ``for arbitrary large" without modifying the definition). It satisfies the ultrametric-like inequality $\phi(c,c'')\ge\min(\phi(c,c'),\phi(c',c''))$ for all proper paths $c,c',c''$. Thus the function $\delta(c,c')=2^{-\phi(c,c')}$ is an ultrametric pseudo-distance on proper Lipschitz paths. The Hausdorff quotient is an ultrametric space, called space of ends of $X$ and denoted by $E(X)$. It is easy to see that its distance depends on the choice of base-point only up to bilipschitz. If $X$ is proper, $E(X)$ is a compact space. When $X$ is empty, we define $E(X)$ to be empty, although the empty space will play an empty role here!

Note that in particular if two proper paths $(x_t)$ and $(x'_t)$ coincide arbitrary far (the set of $(t,t')$ such that $x_t=x'_{t'}$ is unbounded) then they define the same element of $E(X)$.

\begin{proof}[Proof of Theorem \ref{ends_func}]
Consider $f:X\to Y$ as in the theorem, satisfying \[d(f(x),f(x'))\le Cd(x,x')+v(|x|+|x'|)\] for all $x,x'$, and $|x|\le |f(x)|\le C|x|$ for all $x$. The latter (instead of $C'|x|-C''\le |f(x)|\le C|x|+C'''$) is a mild additional assumption in order to ease reading the proof; the reader can easily adapt to the general case. We also assume that $v\ge 1$ and $v$ is non-decreasing.

Let $(x_t)$ be a proper path in $X$. Let $(t_n)$ be a sequence tending to infinity (called discretization) such that $d(x_{t_n},x_{t})\le 1$ for all $t\in [t_n,t_{n+1}]$ and all $n$. Define $y_n=f(x_{t_n})$; then $d(y_n,y_{n+1})\le C+v(2|x_n|)\ll |y_n|$. Thus if we interpolate, using geodesic segment, $(y_n)$ to a path $(z_t)$, then $(z_t)$ is proper, and its class in $E(Y)$ does not depend on the choice of interpolation. Furthermore, if we use a finer discretization, then the class in $E(Y)$ of the resulting path remains the same. So we have a well-defined map from the set of proper paths to $E(Y)$.

There exists $R_0$ such that $v(r)\le r/2$ for all $r\ge R_0$. Fix $R\ge R_0$. Let $(x_t)$ and $(x'_t)$ be proper paths with $\phi((x_t),(x'_t))\ge 4R$. Define $(y'_t)$ and $(z'_t)$ in the same way as above.

Use a discretization $(t_n)$ working for both paths. There exists $t_0$ such that for all $t\ge t_0$, there exists a path from $x_t$ to $x'_t$ outside the open $2R$-ball of $X$. Fix $t\ge t_0$ and such a path $\gamma_t$. We can assume that $\gamma_t$ is defined on an interval $[0,m]$ with $m$ an integer, and that $d(\gamma_t,\gamma_u)\le 1$ for all $t,u$ such that $|t-u|\le 1$. Define, for $n\in\{0,\dots,m\}$, $\mu_n=f(\gamma_n)$. Then $|\mu_n|\ge |\gamma_n|\ge 2R$, and we have
\[d(\mu_n,\mu_{n+1})\le C+v(|\gamma_n|+|\gamma_{n+1}|)\le C+v(2|\gamma_n|+1)\le C+v(4R+1)\le R.\]
Thus $|\mu_n|-d(\mu_n,\mu_{n+1})\ge 2R-R=R$. So, if we interpolate $(\mu_n)$ to a path $(\mu_t)$ using geodesic segments, we have $|\mu_t|\ge R$ for all $R$, and this joins $y_t$ to $y'_t$ outside the open $R$-ball. Thus $\varphi((z_t),(z'_t))\ge R$. This shows that whenever $r\le 2^{-R_0}$, we have $\delta((x_t),(x'_t))\le r^4$ implies $\delta((z_t),(z'_t))\le r$. This shows that the map $(x_t)\mapsto (z_t)$ factors through a H\"older continuous map $E(X)\to E(Y)$. (Note however that the H\"older exponent 1/4 is an artifact of the assumption $|f(x)|\ge |x|$ and we cannot expect it to be bounded away from 0 in general.)
\end{proof}

\subsection{Hyperbolic spaces}\label{s_hyp}

Gromov-hyperbolicity is defined for arbitrary metric spaces; see the definition below in the proof of Lemma \ref{ghlemma}. A fundamental property is its quasi-isometry invariance among geodesic metric spaces \cite[\S 5.2]{GH}, where it is also observed that there exists a (non-geodesic) metric space quasi-isometric to $\Z$ but not Gromov-hyperbolic.

\begin{ex}\label{sbe_noninv}
Let $u$ be any sublinear function, say mapping positive integers to positive integers. Decorate the graph $\Z$ by adding for all $n$, between $2^n$ and $2^n+u(n)$, a second branch of size $u(n)$; let $X$ be the resulting graph. Then the embedding $\Z\to X$ is an isometric SBE, but $X$ is not Gromov-hyperbolic (nor even large-scale simply connected).
\end{ex}

This shows that among geodesic metric spaces, Gromov-hyper\-bolicity is not an SBE-invariant. However, by the argument explained in \S\ref{intro_hyp}, SBE-invariance of Gromov-hyper\-bolicity holds under a homogeneity assumption:

\begin{defn}\label{qiho}
A metric space is quasi-isometrically homogeneous (or QI-homo\-geneous) if quasi-isometries with uniform constants act transitively. Namely, this means that there exist $C\ge 0$ and $c\ge 1$ such that for any $x_0,x\in X$, there exists $f:X\to X$ such that $f(x)=x_0$ and $f$ is a quasi-isometry for the constants $c,C$: $d(f(x),f(x'))$ is in the interval $[c^{-1}d(x,x')-C,cd(x,x')+C]$ for all $x,x'\in X$ and $\sup_{y\in X}d(y,f(X))\le C$. 
\end{defn}

\begin{prop}\label{hyp_sbe_inv}
Among QI-homogeneous (Definition \ref{qiho}) geodesic metric spaces, Gromov-hyperbolicity is an SBE-invariant.\qed
\end{prop}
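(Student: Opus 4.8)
The plan is to route everything through asymptotic cones and the Gromov--Drutu criterion recalled in \S\ref{intro_hyp}: a geodesic metric space is Gromov-hyperbolic if and only if all its asymptotic cones — over every choice of base-point sequence, scaling sequence and ultrafilter — are real trees. Since the SBE relation is symmetric, it suffices to prove one implication: if $X$ and $Y$ are QI-homogeneous geodesic spaces that are SBE and $X$ is hyperbolic, then $Y$ is hyperbolic.

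The crux is a purely metric lemma that I would isolate first: a (complete) geodesic metric space that is bilipschitz homeomorphic to a real tree is itself a real tree. This is the delicate point, because real-tree-ness is detected by $0$-hyperbolicity, which is \emph{not} a bilipschitz invariant, so the bilipschitz control furnished by SBEs does not preserve it directly. The remedy is to detect real trees topologically: a real tree is uniquely arcwise connected, a purely topological property, hence preserved by the homeomorphism underlying any bilipschitz map; and conversely a geodesic metric space that is uniquely arcwise connected is a real tree, since a geodesic segment is an injective path, thus an arc, and by uniqueness it is the unique arc joining its endpoints. This reconciliation of bilipschitz data with an invariant that is only topological is the main obstacle.

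Next I would invoke the functoriality of the asymptotic cone construction under SBE maps (the defining motivation of the sublinearly Lipschitz category, cf.\ \S\ref{s_sbm}): for the fixed base-point and any prescribed scaling sequence and ultrafilter, an SBE $X\to Y$ induces a bilipschitz homeomorphism between the corresponding origin-based cones. Using that asymptotic cones of geodesic spaces are complete geodesic spaces, together with the lemma above, hyperbolicity of $X$ — which makes every origin-based cone of $X$ a real tree — forces every origin-based cone of $Y$ to be a real tree as well.

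Finally I would use QI-homogeneity of $Y$ to promote ``origin-based'' to ``all base-points'', which is the homogeneity reduction alluded to in \S\ref{intro_hyp}. Given any base-point sequence $(b_n)$, choose via Definition \ref{qiho} quasi-isometries $g_n:Y\to Y$ with uniform constants sending $b_n$ to the base-point; for a fixed scaling sequence and ultrafilter, the sequence $(g_n)$ then induces a bilipschitz homeomorphism from the cone based at $(b_n)$ onto the origin-based cone for the same data, the additive constants becoming negligible after rescaling and the multiplicative constants yielding the bilipschitz bounds. Hence every asymptotic cone of $Y$ is bilipschitz homeomorphic to an origin-based one, which is a real tree, and therefore, being geodesic, is a real tree. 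By the Gromov--Drutu criterion, $Y$ is hyperbolic, and by symmetry the equivalence follows. The homogeneity reduction of this last paragraph is a routine ultralimit manipulation; the real work is the metric lemma of the second paragraph.
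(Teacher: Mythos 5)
Your proposal is correct and is essentially the paper's own argument, which is only sketched in \S\ref{intro_hyp} before the proposition is stated without further proof: combine the Gromov--Drutu cone criterion with the fact that an SBE induces bilipschitz homeomorphisms between origin-based asymptotic cones, and use QI-homogeneity to reduce cones over arbitrary base-point sequences to origin-based ones. Your explicit lemma that a geodesic space homeomorphic to a real tree is itself a real tree (via unique arcwise connectedness) is precisely the detail the paper leaves implicit, and it correctly bridges the gap between the bilipschitz control furnished by the cones and the non-bilipschitz-invariant notion of $0$-hyperbolicity.
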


We now turn to Theorem \ref{i_hyp_bou}. We restate it slightly more precisely:

\begin{thm}\label{hyp_bou}
Let $f:X\to Y$ be a sublinearly Lipschitz map between proper geodesic $\delta$-hyperbolic spaces. Assume $f$ is radially expansive with multiplicative constant $c>0$, in the sense that $|f(x)|\ge c|x|-c'$ for some $c'\in\R$ and all $x\in X$. Then, for every $\mu\in\mathopen]0,2^{1/2\delta}]$, every $\eps>0$, the map $f$ induces a $(c-\eps)$-H\"older map $(\partial X,d_\mu)\to (\partial Y,d_\mu)$ on the visual boundaries. In particular, any  SBE $X\to Y$ induces a H\"older homeomorphism between the boundaries.
\end{thm}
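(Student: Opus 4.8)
The plan is to reduce the theorem to a single lower bound on Gromov products and then feed in the sublinear geometry of geodesics. Write $(x|x')_{o}=\tfrac12(|x|+|x'|-d(x,x'))$ for the Gromov product, and recall that on a proper $\delta$-hyperbolic space it extends to the boundary up to an additive $2\delta$, with the visual metric satisfying $d_\mu(\xi,\eta)\asymp\mu^{-(\xi|\eta)_{o}}$ once $\mu\le 2^{1/2\delta}$. Under this dictionary, proving that $f_*$ is $(c-\eps)$-Hölder is exactly proving
\[(f_*\xi|f_*\eta)_{o_Y}\ \ge\ (c-\eps)\,(\xi|\eta)_{o_X}-K_\eps,\qquad\forall\,\xi,\eta\in\partial X,\]
for every $\eps>0$. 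So, modulo the routine check that $f_*$ is well defined, the whole theorem becomes this one inequality.

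First I would set up $f_*$. Given $\xi\in\partial X$, take a geodesic ray $\gamma$ from $o_X$ to $\xi$; radial expansivity gives $|f(\gamma(t))|\ge ct-c'\to\infty$, and sublinear Lipschitzness bounds the jumps by $d_Y(f(\gamma(s)),f(\gamma(t)))\le C|s-t|+o(\max(s,t))$. I would show that $f\circ\gamma$, interpolated by geodesic segments, sublinearly fellow-travels a genuine geodesic ray of $Y$, hence converges to a point $f_*\xi\in\partial Y$ depending only on $\xi$ (two rays to the same $\xi$, and two sublinearly close maps, give the same limit). This is where the sublinear analogue of the Morse lemma enters.

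For the core estimate, fix $\xi\neq\eta$ and set $T=(\xi|\eta)_{o_X}$. Let $\sigma$ be a bi-infinite geodesic from $\xi$ to $\eta$; its closest point to $o_X$ lies at distance $T+O(\delta)$, so every point of $\sigma$ sits outside $B(o_X,\,T-O(\delta))$. By radial expansivity the whole image $f\circ\sigma$ then sits outside $B(o_Y,\,cT-o(T))$. Truncating $\sigma$ at arclength $\approx T$ on each side of its closest point, the image sub-path has length $O(T)$, so by the elementary logarithmic lemma (a geodesic joining the endpoints of a path of length $L$ stays within $\delta\log_2 L+O(1)$ of it) the geodesic joining the two truncation images also stays outside $B(o_Y,\,cT-o(T))$, the $\delta\log_2 O(T)$ error being absorbed into $o(T)$. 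Letting the truncation tend to infinity and using that the truncation points approximate $f_*\xi,f_*\eta$ in the Gromov-product sense, I conclude that $[f_*\xi,f_*\eta]$ stays outside $B(o_Y,\,cT-o(T))$, i.e. $(f_*\xi|f_*\eta)_{o_Y}\ge cT-o(T)$. Since $o(T)\le\eps T$ for $T$ large, this is the displayed inequality; the SBE statement then follows because an SBE and its coarse inverse are both radially expansive sublinearly Lipschitz, so they induce mutually inverse Hölder boundary maps.

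The main obstacle is the control of images of geodesics. Because $f$ is only assumed radially expansive and not $O(v)$-expansive, $f\circ\sigma$ need not be a two-sided quasigeodesic, so the classical Morse lemma does not apply off the shelf; what rescues the argument is that I only ever need a lower bound on $|f(\cdot)|$ along $\sigma$ (from radial expansivity) together with an upper bound on consecutive jumps (from sublinear Lipschitzness), fed into the logarithmic divergence lemma. The delicate point is the choice of truncation scale: it must be large enough that the truncation points genuinely approximate $f_*\xi,f_*\eta$, yet comparable to $T$ so that the $\delta\log_2(\text{length})$ error stays $o(T)$. Arranging both simultaneously, and verifying Gromov-product continuity at the boundary, is the technical heart of the proof.
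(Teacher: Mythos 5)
Your reduction is the right one: with the visual-metric dictionary $d_\mu\asymp\mu^{-(\cdot|\cdot)}$ (valid for $\mu$ in the stated range by the Ghys--Harpe lemma), the theorem is exactly the inequality $(f_*\xi|f_*\eta)\ge (c-\eps)(\xi|\eta)-K_\eps$, and this is also how the paper organizes its proof. Your ``middle'' estimate is also sound: taking the bi-infinite geodesic $\sigma$ from $\xi$ to $\eta$, whose distance to the basepoint is $(\xi|\eta)+O(\delta)$, pushing it forward, and applying the logarithmic divergence lemma to the interpolated image of the truncated segment does give $(f(p_-)|f(p_+))\ge cT-o(T)$. (A minor slip: the image sub-path has length $o(T^2)$, not $O(T)$, since each unit step can be stretched by $C+u(O(T))$ with $u$ merely sublinear; this is harmless because only the logarithm of the length enters.) The genuine gap is everything involving the tails. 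First, the well-definedness of $f_*$ is delegated to a ``sublinear analogue of the Morse lemma''; but $f\circ\gamma$ need not be a quasigeodesic in any two-sided sense --- radial expansivity bounds only distances to the origin, not $d(f(\gamma(s)),f(\gamma(t)))$ from below --- so fellow-travelling a geodesic ray is not available off the shelf, and the natural proof that $f\circ\gamma$ is Cauchy--Gromov is precisely the estimate you never write down. Second, and more seriously, to combine your middle estimate with the boundary points you need tail bounds of the form $(f(p_\pm)|f_*\xi),\,(f(p_\pm)|f_*\eta)\ge cT-o(T)$, and your stated plan --- ``letting the truncation tend to infinity'' in a single application of the logarithmic lemma --- cannot produce them: truncating at arclength $S$ gives an error $\delta\log_2 L_S\approx\delta\log_2 S$ which diverges as $S\to\infty$, so the bound becomes vacuous, while keeping $S\approx T$ says nothing about the segment between $p_\pm$ and the boundary. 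You name this tension explicitly but do not resolve it, and it is not a routine verification: it is the part of the argument that carries the theorem.

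The resolution is a bound that is uniform in the far endpoint, and this is exactly the paper's key mechanism. Along the image chain $y_i=f(x_i)$ of a geodesic from the origin, the products satisfy $(y_i|y_{i+1})\ge ci-\tfrac12u(i)\ge(c-\eps)i$, so the visual increments $\mu^{-(y_i|y_{i+1})}$ form a geometric series; summing it and converting back via $d_\mu\le\rho_\mu\le 2d_\mu$ gives $(y_k|y_n)\ge(c-\eps)k-M(c,\mu)$ with $M$ \emph{independent of $n$}. This single estimate yields at once the convergence of $f$ along rays (well-definedness of $f_*$) and the quantitative tail control, after which one application of the $\delta$-hyperbolicity inequality at the splitting time $k=\lfloor(x|x')\rfloor$ finishes the pointwise bound, which then passes to the boundary. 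One could also graft this onto your picture by working dyadically along $\sigma$: on the segment at arclength scale $2^kT$ the points lie outside $B(o_X,2^kT-O(\delta))$, giving products $\ge c2^kT-o(2^kT)$ and again a summable geometric series, avoiding the accumulation of $\delta$-errors that a naive iteration of the four-point inequality would cause. So your proposal can be completed, but only by importing this geometric-series argument; as written, it proves the easier middle estimate and defers precisely the content of the paper's proof.
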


Here $d_\mu$ is the metric on the boundary, depending on the parameter $\mu\in\mathopen]0,2^{1/2\delta}]$ (its definition is recalled in the proof below).
Note that the H\"older exponent $c-\eps$ does not depend on the constants and function involved in the definition of being sublinearly Lipschitz, but only on the ``radial" expansion $c$.

Write $\R_+=[0,\infty\mathclose[$. Let $X$ be a set and $\rho:X\times X\to\R_+$ be a function (this is called a kernel on $X$). We say that $\rho$ is subadditive if $\rho(x,z)\le\rho(x,y)+\rho(y,z)$ for all $x,y,z$. For every kernel there is a largest subadditive kernel $\hat{\rho}:X\times X\to\R_+$ bounded above by $\rho$; $\hat{\rho}(x,y)$ is the infimum over all $n$ and all sequences $x=x_0,x_1,\dots,x_n=y$ of $\sum_{i=1}^n\rho(x_{i-1},x_i)$. Of course $\hat{\rho}\le\rho$; if $\rho$ is symmetric then so is $\hat{\rho}$, and if $\rho$ vanishes on the diagonal then so does $\hat{\rho}$.

\begin{lem}[Ghys-Harpe]\label{ghlemma}
Let $\rho:X\times X\to\R_+$ be a kernel. Suppose that for some $\lambda\in [1,\sqrt{2}]$ we have, for all $x,y,z$, $\rho(x,z)\le\lambda\max(\rho(x,y),\rho(y,z))$. Then $(3-2\lambda)\rho\le\hat{\rho}$.
\end{lem}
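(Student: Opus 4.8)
The plan is to unwind the definition of $\hat\rho$ and reduce the whole statement to a single inequality about finite chains. Since $\hat\rho(x,y)$ is by definition the infimum of $\sum_{i=1}^n\rho(x_{i-1},x_i)$ over all chains $x=x_0,\dots,x_n=y$, the assertion $(3-2\lambda)\rho\le\hat\rho$ is equivalent to proving that \emph{for every such chain} one has $\rho(x_0,x_n)\le K\sum_{i=1}^n\rho(x_{i-1},x_i)$, where I set $K=1/(3-2\lambda)$; taking the infimum over chains then transfers the bound to $\hat\rho$. Observe first that $3-2\lambda>0$ because $\lambda\le\sqrt2<3/2$, and that $K\ge1$ because $\lambda\ge1$.

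I would establish the chain inequality by strong induction on $n$. The case $n=1$ is exactly $K\ge1$. For $n\ge2$ put $S=\sum_{i=1}^n\rho(x_{i-1},x_i)$ and choose the largest index $j$ with $\sum_{i=1}^j\rho(x_{i-1},x_i)\le S/2$; then the left block $x_0,\dots,x_j$ has cost $\le S/2$, the right block $x_{j+1},\dots,x_n$ has cost $<S/2$ (by maximality of $j$), and the middle edge satisfies $\rho(x_j,x_{j+1})\le S$. Applying the induction hypothesis to the two strictly shorter blocks gives $\rho(x_0,x_j)\le KS/2$ and $\rho(x_{j+1},x_n)\le KS/2$. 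Now I invoke the quasi-ultrametric hypothesis twice along the coarse chain $x_0\to x_j\to x_{j+1}\to x_n$: first $\rho(x_j,x_n)\le\lambda\max(\rho(x_j,x_{j+1}),\rho(x_{j+1},x_n))$, then $\rho(x_0,x_n)\le\lambda\max(\rho(x_0,x_j),\rho(x_j,x_n))$. As every quantity occurring is bounded by $\max(S,KS/2)$ and $\lambda\ge1$, these collapse to
\[\rho(x_0,x_n)\le\lambda^2\max\!\left(S,\tfrac{KS}{2}\right).\]

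It then remains to check that the right-hand side is $\le KS$, and this is exactly where the constraint $\lambda\le\sqrt2$ enters decisively. For the term $KS/2$ one uses $\lambda^2\le2$, giving $\lambda^2\cdot KS/2\le KS$. For the term $S$ one must show $\lambda^2\le K$, i.e.\ $\lambda^2(3-2\lambda)\le1$; this rearranges to $2\lambda^3-3\lambda^2+1\ge0$, and the factorization $2\lambda^3-3\lambda^2+1=(\lambda-1)^2(2\lambda+1)$ makes it manifest for all $\lambda\ge1$. Hence $\rho(x_0,x_n)\le KS$, which closes the induction and, after passing to the infimum, yields $(3-2\lambda)\rho\le\hat\rho$.

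The step I would watch most carefully, and the only genuine subtlety, is the degenerate splits $j=0$ and $j=n-1$, in which one block shrinks to a single vertex so that the bound above would involve a diagonal value $\rho(x,x)$ uncontrolled by the hypotheses. These cause no trouble: in those cases the relevant coarse chain is already the three-term chain $x_0\to x_1\to x_n$ (resp.\ $x_0\to x_{n-1}\to x_n$), to which a \emph{single} application of the quasi-ultrametric inequality applies, giving $\rho(x_0,x_n)\le\lambda\max(S,KS/2)\le KS$ directly — in fact a sharper bound. Beyond this bookkeeping, the whole content of the lemma is the elementary arithmetic above: $\lambda\le\sqrt2$ is precisely the threshold that simultaneously forces $\lambda^2\le2$ and $\lambda^2(3-2\lambda)\le1$, so that $3-2\lambda$ emerges as the natural constant produced by this halving argument.
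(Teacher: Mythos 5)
Your proof is correct, and it follows essentially the same route as the proof the paper relies on: the paper gives no argument of its own here, citing instead \cite[Chap 7, \S 3]{GH} (rewritten in \cite[Chap.\ III.H, 3.21]{BH}), and that reference establishes the lemma by exactly your halving induction on chains, with the same threshold arithmetic $\lambda^2\le 2$ and $\lambda^2(3-2\lambda)\le 1$, equivalently $(\lambda-1)^2(2\lambda+1)\ge 0$. The one convention to make explicit is that $j$ is chosen in $\{0,\dots,n-1\}$ (always possible, since the empty prefix has cost $0\le S/2$), as otherwise a chain of total cost $S=0$ would force $j=n$ and break the split; with this convention that chain falls under your degenerate case $j=n-1$, which you already handle --- and your care with those degenerate splits is genuinely needed, because the paper's statement, unlike the classical references', does not assume that $\rho$ vanishes on the diagonal.
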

\begin{proof}[On the proof]
The statement is originally stated in a more specific context with the change of variables $\lambda=1+\eps'$. It is proved in \cite[Chap 7,\S 3]{GH} and the statement and proof is almost textually rewritten in \cite[Chap III.H, \S 3.21]{BH}; it provides with no change the above statement.
\end{proof}

Let us now prove Theorem \ref{hyp_bou}. We assume $c'=0$ since we can easily reduce to this case, or adapt the proof.

Let $Y$ be a pointed metric space. Recall that the Gromov product is defined by 
$2(x|y)=|x|+|y|-d(x,y)$. The pointed\footnote{One usually says that a metric space is $\delta$-hyperbolic if the pointed metric space $(Y,y)$ is $\delta$-hyperbolic for every $y$. Actually, if $(Y,y)$ is $\delta$-hyperbolic, then $Y$ is $2\delta$-hyperbolic \cite[Chap.\ 1]{CDP}.} metric space $Y$ is $\delta$-hyperbolic if $(x|z)\ge\min\{(x|y),(y|z)\}-\delta$ for all $x,y,z$. 

Let us briefly recall the definition of boundary. A sequence $(x_n)$ in a geodesic Gromov-hyperbolic space $Y$ is Cauchy-Gromov if it satisfies $(x_n|x_m)\to\infty$ when $\min(n,m)\to\infty$. Two Cauchy-Gromov sequences $(x_n),(y_n)$ are asymptotic if $(x_n|y_n)\to\infty$ when $n\to\infty$; this is an equivalence relation on the set of Cauchy-Gromov sequences, and the quotient is called the boundary of $Y$, and denoted by $\partial Y$. If $Y$ is proper, every Cauchy-Gromov sequence is asymptotic to a sequence forming a geodesic ray (an isometric embedding of $\N$).

For the moment however, we work within $Y$ rather than its boundary.
For $\mu>0$, define $\rho_\mu(x,y)=\mu^{-(x|y)}$. If $Y$ is $\delta$-hyperbolic, then $\rho_\mu$ satisfies the inequality of Lemma \ref{ghlemma} with $\lambda=\mu^\delta$; thus the lemma applies as soon as $\mu^\delta\le\sqrt{2}$. Actually, we will assume (to simplify constants) that $3-2\lambda\ge 1/2$, that is, $\lambda\le 5/4$, which means $\mu\le (5/4)^{1/\delta}$. We set $d_\mu=\widehat{\rho_\mu}$, so that $d_\mu\le\rho_\mu\le 2d_\mu$.

Let $(y_n)$ be a (possibly finite) sequence in $Y$ such that
\[|y_n|\ge cn;\quad d(y_n,y_{n+1})\le u(n),\quad u(n)\ll n,\quad u\text{ non-decreasing}.\]

For every $\eps>0$, there exists $k_\eps$ such that $u(n)\le 2\eps n$ for all $n\ge k_\eps$.

Then, for $k_\eps\le i$, we have
\[(y_i|y_{i+1})=\frac12(|y_i|+|y_{i+1}|-d(y_i,y_{i+1}))\ge ci-\frac12u(i)\ge (c-\eps)i.\]
Hence
\[d_\mu(y_i,y_{i+1})\le \mu^{-(c-\eps)i}\]
Then, for $k_\eps\le k\le n$, denoting $c_\eps=c-\eps$, we have
\[d_\mu(y_k,y_n)\le\sum_{i=k}^{n-1}d_\mu(y_i,y_{i+1})\le\sum_{i=k}^{n-1}\mu^{-c_\eps i}
\le\frac{\mu^{-c_\eps k}-\mu^{-c_\eps n}}{1-\mu^{-c_\eps}}\le\frac{\mu^{-c_\eps k}}{1-\mu^{-c}},\]
and thus 
\[(y_k|y_n)=-\log_\mu(\rho_\mu(y_k,y_n))\ge-\log_\mu(2d_\mu(y_k,y_n))\]
\[\ge c_\eps k-\log_\mu(2/(1-\mu^{-c}))=:c_\eps k-M(c,\mu)\]

Now let $(y_n)$ and $(y'_n)$ be sequences satisfying the same conditions, as well as $d(y_j,y'_j)\le u(j)$ for all $j\le k$, for some given $k$. For each $\eps$ such that $k\ge k_\eps$ and $n\ge k$, we have
\[(y_k|y_n),(y'_k|y'_n)\ge c_\eps k-M(c,\mu);\]
in the same way we bounded below $(y_i|y_{i+1})$, we have
\[(y_k|y'_k)\ge c_\eps k.\]
So 
\[(y_n|y'_n)\ge\min\{(y_n|y_k),(y_k|y'_k),(y'_k|y'_n)\}-2\delta\ge c_\eps k-M(c,\mu)-2\delta.\]

Now let $X,Y$ be $\delta$-hyperbolic proper geodesic spaces, and $f$ a map $X\to Y$ satisfying
\[f(x)\ge c|x|;\quad d(x,x')\le 2\delta\Rightarrow d(f(x),f(x'))\le u(|x|)\] 

Let $x,x'$ be points in $X$. Join them with geodesics to the origin $(x_i)_{0\le i\le n}$, $(x'_i)_{0\le j\le m}$. Define $k=\lfloor (x|x')\rfloor$. Then $d(x_j,x'_j)\le 2\delta$ for all $j\le k$.

Define $y_i=f(x_i)$, $y'_i=f(x'_i)$. Then the previous argument shows that $(y_n|y'_m)\ge c_\eps k-M(c,\mu)-2\delta$, as soon as $k\ge k_\eps$. That is, 
\[\forall x,x',\;(x|x')> k_\eps\quad\Rightarrow\quad(f(x)|f(x'))\ge c_\eps (x|x')-N(c,\mu,\delta),\]
where $N(c,\mu,\delta)=c/2+M(c,\mu)+2\delta$.

An immediate consequence is that $f$ maps Cauchy-Gromov sequences to Cauchy-Gromov sequences, and thus extends to a map $f_*$ between boundaries. This can be made more quantitative.

Recall that the Gromov product extends to the boundary, with some finite additive ambiguity. Precisely, one can define \cite[\S 7.2]{GH}, for $\omega,\omega'\in\partial X$, their Gromov product $(\omega|\omega')$ as the supremum of $\liminf_{i,j\to\infty}(x_i|x'_j)$ where $(x_i)$, $(x'_i)$ range over all sequences representing $\omega$ and $\omega'$ respectively (i.e., converging to $\omega$ and $\omega'$).

 Let $\omega,\omega'$ be boundary points. Choose geodesic rays $(x_n)$, $(x'_n)$ converging to them. Then $\liminf(x_n|x'_n)-(\omega|\omega')\in [-2\delta,0]$ \cite[Chap 7.1]{GH}, and $\liminf(f(x_n)|f(x'_n))-(f_*(\omega)|f_*(\omega'))\in [-2\delta,0]$.
Hence, if $(\omega|\omega')>k_\eps+2\delta$, then
\[(f_*(\omega)|f_*(\omega'))\ge c_\eps(\omega|\omega')-2\delta c-N(c,\mu,\delta)=c_\eps(\omega|\omega')-N'(c,\mu,\delta)\]

So for all $\omega,\omega'$ such that $d_\mu(\omega,\omega')<\mu^{-k_\eps-2\delta}/2$, we have 
\[\rho_\mu(f_*(\omega),f_*(\omega'))\le P(c,\mu,\delta)\rho_\mu(\omega,\omega')^{c_\eps},\quad P=\mu^{N},\]
so that 
\[d_\mu(f_*(\omega),f_*(\omega'))\le 2^cP(c,\mu,\delta)d_\mu(\omega,\omega')^{c_\eps}.\]

This shows that $f_*$ is $c$-H\"older. (If we define $Q=\min(2^cP,(2\mu^{k_\eps+2\delta})^c)$, then the inequality $d_\mu(f_*(\omega),f_*(\omega'))\le Qd_\mu(\omega,\omega')^{c_\eps}$ holds for all $\omega,\omega'$.)

\section{SBE and spaces with linear isodiametric function}\label{SBEfp}

In an attempt to determine whether finite presentability is an SBE-invariant of finitely generated groups, it has been natural to focus on the more restricted class of groups with linear isodiametric function, which contains most interesting finitely presented groups. 

Here we define this notion in the general setting of connected graphs. 

\begin{defn}
In a connected graph with base-point, we say that two combinatorial based loops are $R$-homotopic if they are homotopic as based loops in the 2-complex obtained by gluing $k$-gons to all loops of length $k\le R$. 

We say that a connected graph has the linear isodiametric filling property (abbreviated LID) if it satisfies the following: given a base-point, there exists $R,C$ such that for every $n$, every loop of diameter $\le n$ is $R$-homotopic to a trivial loop within a ball of radius $\le Cn$. (By convention the empty space is LID.)

We say that a compactly generated locally compact group is LID if it quasi-isometric to some LID connected graph.
\end{defn}

Note that the choice of base-point does not matter in the definition of LID. Also, we use the notion of QI-homogeneous from Definition \ref{qiho}.

\begin{prop}
Being LID is an SBE-invariant among QI-homogeneous graphs. 

More generally, suppose that there is a sublinearly Lipschitz retract between connected graphs $Y\to X$, that $X$ is QI-homogeneous and $Y$ is LID. Then $X$ is LID.
\end{prop}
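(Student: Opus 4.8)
The plan is to prove the more general statement (the special case follows since an SBE between QI-homogeneous graphs is in particular a sublinearly Lipschitz retract both ways). So suppose $g:Y\to X$ is a sublinearly Lipschitz retract, with sublinearly Lipschitz section $f:X\to Y$ such that $g\circ f$ is $o(r)$-close to $\mathrm{id}_X$, with $Y$ LID and $X$ QI-homogeneous. First I would fix the data: constants $R,C$ witnessing LID for $Y$, a sublinear function $v$ controlling $f$, $g$, and the closeness $d(g(f(x)),x)\le v(|x|)$, and the QI-homogeneity constants $c_0,C_0$ for $X$. I want to produce constants $R',C'$ so that every loop in $X$ of diameter $\le n$ is $R'$-homotopic to a trivial loop inside a ball of radius $\le C'n$.

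\smallskip

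The core idea is to transport a loop $\ell$ in $X$ up to $Y$ via $f$, fill it there using LID, and push the filling back down via $g$. The key steps, in order, are as follows. Given a combinatorial loop $\ell=(x_0,\dots,x_\ell=x_0)$ in $X$ of diameter $\le n$, I first discretize and apply $f$ edgewise: since $f$ is $O(v)$-Lipschitz, consecutive images $f(x_i),f(x_{i+1})$ satisfy $d(f(x_i),f(x_{i+1}))\le cd(x_i,x_{i+1})+C''v(|x_i|+|x_{i+1}|)\le c+C''v(2Kn)$ for some bound $K$ on $|x_i|$ (after translating the loop so its base-point is near the origin, which is where QI-homogeneity of $X$ enters — I use a uniform quasi-isometry carrying the loop's base-point to the origin, incurring only bounded multiplicative/additive error in diameters). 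Interpolating in $Y$ by geodesic segments produces a loop $\tilde\ell$ in $Y$ whose length and diameter are $O(n)$ (using that $f$ is radially controlled, $|f(x)|=O(|x|)$, so $\tilde\ell$ lives in a ball of radius $O(n)$). Next, LID in $Y$ fills $\tilde\ell$: it is $R$-homotopic to a trivial loop within a $Y$-ball of radius $\le C\cdot O(n)=O(n)$. Finally I apply $g$ to this homotopy. Because $g$ is $O(v)$-Lipschitz, it sends edges of the filling 2-complex to paths of bounded length (again $O(1)+O(v(O(n)))$), and it sends $R$-cells to loops of bounded size, which are $R'$-homotopic to trivial ones for $R'$ chosen uniformly large; the image of the whole filling stays in a $g$-image of an $O(n)$-ball, hence in an $X$-ball of radius $O(n)$ since $g$ is radially controlled.

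\smallskip

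The one point requiring care is reconciling the pushed-down loop $g(\tilde\ell)$ with the original $\ell$. These are not equal, but $g\circ f$ is $o(r)$-close to $\mathrm{id}_X$, so $d(g(f(x_i)),x_i)\le v(|x_i|)=o(n)$. I connect $g(f(x_i))$ to $x_i$ by a geodesic of length $\le v(Kn)$; the resulting ``error quadrilaterals'' between consecutive indices have perimeter bounded by $d(x_i,x_{i+1})+d(x_{i+1},g(f(x_{i+1})))+(\text{length of }g\text{-image of the interpolated }f\text{-edge})+d(g(f(x_i)),x_i)$, all of which is $O(1)+O(v(n))$. Hence each such quadrilateral is a loop of bounded diameter, $R'$-homotopic to trivial, and all of them sit inside a ball of radius $O(n)$. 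Composing the LID filling of $g(\tilde\ell)$ with these error homotopies yields the desired $R'$-homotopy of $\ell$ to a trivial loop within an $X$-ball of radius $C'n$.

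\smallskip

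The main obstacle I anticipate is the bookkeeping of the two homotopy thresholds: I must exhibit a single $R'$ that simultaneously handles (i) the image under $g$ of each elementary $R$-cell of the $Y$-filling and (ii) each error quadrilateral. Both are loops whose length is bounded by a fixed affine function of $v(O(n))$, and a priori that bound grows with $n$, which would be fatal. The resolution is that $v$ is \emph{sublinear}: for the homotopy radius one only needs the loops to lie in a ball of radius $O(n)$ (fine, since $v(O(n))=O(n)$), whereas for the homotopy \emph{threshold} $R'$ one needs the elementary loops to have bounded \emph{length}. Here the lengths are $O(1)$ plus a number of interpolation edges, each of unit length, whose count is $O(v(O(n)))$ — so these are long loops, not short cells. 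The correct move is therefore not to bound cell sizes uniformly, but to observe that each such long loop, lying in a ball of radius $O(n)$ and being null-homotopic in $Y$ (resp. trivially fillable), is $R$-homotopic to trivial inside that ball by LID applied again; thus the threshold $R'=R$ already suffices and only the \emph{radius} $C'n$ grows, exactly as the definition of LID permits. Getting this distinction between ``threshold'' and ``radius'' right is the crux of the argument.
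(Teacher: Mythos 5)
Your construction up to the crux follows the paper's proof exactly: push the loop up via $f$, fill in $Y$ using LID, push the filling down via $g$, and repair the discrepancy with error quadrilaterals; and you correctly isolate the crux, namely that the elementary loops produced in $X$ (the $g$-images of the $R$-gons and the error quadrilaterals) have length of order $v(n)$, hence unbounded. But your resolution of that crux is circular. You assert that each such long loop ``is $R$-homotopic to trivial inside that ball by LID applied again'' --- LID of which space? These are loops in $X$, and LID of $X$ is precisely the statement being proved. LID of $Y$ cannot be applied to them directly; and if you lift such a loop back to $Y$ via $f$, fill it there, and push down via $g$, you generate \emph{new} elementary loops in $X$ whose length is again of order $v$ evaluated at the distance from the base-point --- which is still $O(n)$, since these loops sit anywhere in the $O(n)$-ball. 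The recursion makes no progress, because the sublinear error is governed by distance to the origin, not by the diameter of the loop being filled. So the conclusion ``the threshold $R'=R$ already suffices'' is unjustified, and in fact false in general: in Example \ref{sbe_noninv}, $\Z$ (which is LID) admits a sublinearly Lipschitz retraction onto a graph containing essential cycles of unbounded length (hence not LID), and your argument --- which uses QI-homogeneity of $X$ only for an inessential initial translation --- would apply verbatim to that example and ``prove'' a false statement.

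The missing idea, and the reason the QI-homogeneity hypothesis is in the statement at all, is an induction on scale. Stage one of the argument shows: every loop in the $n$-ball of $X$ can be filled by loops of length at most $u'(n)$ for some sublinear function $u'$ (in the paper, $u'(n)=\max((2c+4)u(n),Ru(CC'n))$), in particular of length $<\eps n$ once $n\ge n_0$. These smaller loops are based far from the origin, so (as explained above) one cannot simply re-run the argument on them in place. QI-homogeneity lets you move each of them by a self-quasi-isometry with uniform constants to the base-point, where it becomes a loop in an $O(u'(n))$-ball; re-applying the whole argument at that smaller scale and translating back fills it by loops of size $O(u'(u'(n)))$, and so on. The scales $n, u'(n), u'(u'(n)),\dots$ drop below $n_0$ after finitely many steps, at which point the loops have bounded length and bound single gons; it is this base case, not $R$ alone, that determines the final threshold $R'$, while the radii of all fillings stay $O(n)$. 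A secondary gap: your claimed perimeter bound $O(1)+O(v(n))$ for the error quadrilaterals is incorrect as stated, since the $g$-image of the interpolated $f$-edge consists of $O(v(n))$ segments each of length $O(v(n))$, hence has length $O(v(n)^2)$, which need not be $o(n)$; the paper handles this by subdividing each quadrilateral into $O(v(n))$ triangles with apex $g(f(x_i))$, each of perimeter $O(v(n))$.
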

\begin{proof}
Assume that $f,g$ are $O(u)$-Lipschitz with $u(r)=o(r)$, and that $f\circ g$ is $O(u)$-close to the identity (we multiply $u$ by a constant if necessary to eliminate the constants multiplying $u$ in the definition). Consider a combinatorial loop $(x_i)$ in the $n$-ball of $X$. Then $d(x_i,g\circ f(x_i))\le u(|x_i|)\le u(n)$.

Define $y_i=f(x_i)$. We can interpolate $(y_i)$ to a path $(z_j)$ in the $Cn$-ball, where $C'$ only depends on $f$. Then since $Y$ is LID (with constant $C$ in the definition) we can homotope $(z_j)$ to a trivial loop within the $CC'$-ball of $Y$. Here, homotope means that we can find a finite sequence of loops in the $CC'$-ball of $Y$, each of which differs from the next one by an $R$-gon. Now we can map this sequence of loops to $X$ using $g$; we obtain a sequence of ``paths" which can be interpolated using geodesic segments; moreover, any two of them differ by the ``image of the $R$-gon", which is a closed path of length $\le Ru(CC'n)$.

Finally, we can pass from $(x_i)$ to the path interpolating $(g\circ f(x_i))$  by filling the ``squares" with vertices $x_i$, $x_{i+1}$, $g\circ f(x_i)$, $g\circ f(x_{i+1}))$, which can are interpolated by gons of length $1+2u(n)+w$, where $w$ comes from interpolating in $X$ the interpolated path in $Y$ between $f(x_i)$ and $f(x_{i+1})$. In $Y$, this path (now $i$ is fixed) has length $\le u(n)$, and hence in $X$ (with one more interpolation) it has length $\le u(n)^2$, which is somewhat a problem (unless $u(n)^2=o(n)$). But actually for every $z$ in the interpolated path between $f(x_i)$ and $f(x_{i+1})$, we have $d(z,f(x_i))\le u(n)$. So $d(g(z),g(f(x_i)))\le (c+1)u(n)$, where $c$ is the Lipschitz constant in $g$. So we can fill our large gon using segment between $g(f(x_i)$ and images of the interpolated gon in $Y$. We thus obtain $u(n)$ ``triangular" gons of length $\le 2(c+1)u(n)+u(n)=(2c+3)u(n)$ (containing $g(f(x_i)$, $g(z_j)$, $g(z_{j+1})$ for various $j$), and one ``square" gon of length $\le 3u(n)+(c+1)u(n)=(c+4)u(n)$ (containing $x_i$, $x_{i+1}$, $g(f(x_i))$, $g(f(x_{i+1}))$). 

First assume for simplicity that $X$ is homogeneous under isometries. Then it follows from the previous argument, assuming that $\max((2c+4)u(n),Ru(CC'n))<n$, which holds, say for $n\ge n_0$, we can fill every based loop in the $n$-ball with loops of smaller radius. These being translates of based loops, we can conclude by induction. In the QI-homogeneous case, there is some loss due to interpolation, but if we assume that $\max((2c+4)u(n),Ru(CC'n))<\eps n$ with $\eps>0$ small enough, depending on the constants involved in QI-homogeneity of $X$, then we can conclude in the same way using easy interpolation arguments. 
\end{proof}

\begin{cor}Among compactly generated locally compact groups (and in particular among finitely generated groups), LID is an SBE-invariant property and more generally is inherited by sublinearly Lipschitz retracts. 
\end{cor}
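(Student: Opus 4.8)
The plan is to deduce the Corollary from the preceding Proposition, the only substantive work being the passage from groups to the connected-graph setting in which that Proposition is phrased. I would first isolate the following principle: if $X$ is a QI-homogeneous connected graph and there is a sublinearly Lipschitz retract $Y\to X$ from an LID space $Y$, then $X$ is LID --- this is exactly the content of the Proposition. The point is that many maps furnish such a retract. An SBE $h:Y\to X$ has an inverse $h^{-1}:X\to Y$ with $h\circ h^{-1}$ sublinearly close to $\mathrm{id}_X$, which exhibits $h$ as a sublinearly Lipschitz retract; in particular every quasi-isometry $Y\to X$ qualifies, a quasi-isometry being $O(1)$-Lipschitz, expansive and surjective with $1=o(r)$. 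Since the sublinearly Lipschitz category is closed under composition, precomposing or postcomposing such maps with quasi-isometries again yields a sublinearly Lipschitz retract.

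Now I would apply this. Recall that a compactly generated locally compact group $G$ is LID precisely when it is quasi-isometric to some LID connected graph, and that $G$ is quasi-isometric to a QI-homogeneous connected graph $\Gamma_G$ of bounded valency (a Cayley graph, which is even vertex-transitive, in the discrete case). For the SBE-invariance assertion, let $G_1,G_2$ be SBE compactly generated locally compact groups with $G_2$ LID, say with $Y$ an LID graph quasi-isometric to $G_2$. Composing the given SBE $G_1\to G_2$ with the quasi-isometries $\Gamma_{G_1}\to G_1$ and $G_2\to Y$, and inverting, produces an SBE $Y\to\Gamma_{G_1}$, hence a sublinearly Lipschitz retract onto the QI-homogeneous graph $\Gamma_{G_1}$ from the LID graph $Y$. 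The principle then gives that $\Gamma_{G_1}$ is LID, so $G_1$, being quasi-isometric to it, is LID. The more general retract-inheritance assertion is proved the same way: if a compactly generated locally compact group $G$ is a sublinearly Lipschitz retract of an LID space, I would conjugate the retraction by the quasi-isometries relating $G$ to $\Gamma_G$ and the ambient space to an LID graph, obtaining a sublinearly Lipschitz retract onto $\Gamma_G$ from an LID graph, and again invoke the principle.

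The single non-formal ingredient --- and the step needing the most care --- is the existence of the QI-homogeneous graph model $\Gamma_G$ in the non-discrete case: one must verify that any compactly generated locally compact group is quasi-isometric to a connected graph of bounded valency on which it acts by quasi-isometries with uniform constants, so that Definition \ref{qiho} is met. This is standard, but it is precisely where the compact-generation and local-compactness hypotheses enter; the discrete case is immediate via the Cayley graph. Apart from this, the argument is bookkeeping with the categorical formalism of \S\ref{s_sbm} together with one appeal to the Proposition, the only pitfall being to keep the two roles straight: the group model $\Gamma_G$, whose LID-ness is the desired conclusion, must occupy the QI-homogeneous slot $X$, and the given LID space the ambient slot $Y$.
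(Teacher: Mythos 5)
Your proposal is correct and follows the route the paper intends: the corollary is stated without separate proof, being the immediate consequence of the preceding Proposition once one replaces each compactly generated locally compact group by a QI-homogeneous connected graph model of bounded valency and conjugates the given SBE (or retraction) by quasi-isometries. Your bookkeeping---that SBEs and quasi-isometries are themselves sublinearly Lipschitz retracts, that retracts compose via the compatibility of composition with $o(r)$-closeness from \S\ref{s_sbm}, and that the group's graph model must occupy the QI-homogeneous slot $X$ of the Proposition---is exactly the implicit argument of the paper.
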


\section{Lie algebras and SBEs of nilpotent groups}\label{SBEnil}

This section is more elaborate than the previous ones. While our goal is the study of SBE between nilpotent groups (\S\ref{su_nil}), it is convenient to start with a solid algebraic preparatory work, in \S\ref{wdc}. This material is new and may also be of independent interest. To provide a warm-up and some motivation, we start with the particular cases of 3-step and 4-step nilpotent Lie groups and Lie algebras in \S\ref{part3} and \S\ref{part4}; these can also be omitted by the reader not looking for specific motivation.

\subsection{The 3-step nilpotent case}\label{part3}

Let $G$ be a 3-step nilpotent simply connected Lie group and $\g$ its Lie algebra. The nilpotency condition means that $\g^4=0$. Taking complement subspaces, we write $\g=\mathfrak{v}_1\oplus\mathfrak{v}_2\oplus\mathfrak{v}_3$, where
\[\mathfrak{v}_3=\g^3, \quad \mathfrak{v}_2\oplus\mathfrak{v}_3=\g^2, \quad \text{and}\;\g=\mathfrak{v}_1\oplus \g^2.\]
This ``linear" grading can fail to be an algebra grading: we only have $[\mathfrak{v}_i,\mathfrak{v}_j]\subset\bigoplus_{k\ge i+j}\mathfrak{v}_k$; in this case, this says in particular that $[\mathfrak{v}_1,\mathfrak{v}_1]\subset\mathfrak{v}_2\oplus\mathfrak{v}_3$ and this is the only possible obstruction to be an algebra grading. More precisely, we have an algebra grading if and only if $[\mathfrak{v}_1,\mathfrak{v}_1]\subset\mathfrak{v}_2$, or equivalently if the projection $[\mathfrak{v}_1,\mathfrak{v}_1]_3$ of $[\mathfrak{v}_1,\mathfrak{v}_1]$ on $\mathfrak{v}_3$ modulo $\mathfrak{v}_2$ is zero.

From this choice of linear grading, we can define a ``corrected" bracket $[\cdot,\cdot]'$, namely defining, for $x_i\in\mathfrak{v}_i$, $x_j\in\mathfrak{v}_j$, $[x_i,x_j]'$ as the projection of $[x_i,x_j]$ to $\mathfrak{v}_{i+j}$ (modulo $\g^{i+j+1}$). This defines a Lie algebra law on $\g$, called the Carnot-graded associated Lie algebra $\textnormal{Car}(\g)$. Let us emphasize that while the isomorphism type of $\textnormal{Car}(\g)$ only depends on $\g$, the homeomorphism $\g\to\textnormal{Car}(\g)$, given here as the identity map, is sensitive to choice of linear grading. For instance, $\g$ might admit a Carnot grading but the chosen linear grading is not one. 

We can view $G$ as $\g$ endowed with the Baker-Campbell-Hausdorff (BCH) law:
\[x\ast y=x+y+\frac12[x,y]+\frac1{12}([x,[x,y]]+[y,[y,x]]),\]
which is truncated by the 3-nilpotency condition. Then the BCH law, using the graded bracket $[\cdot,\cdot]'$, defines another group law $\ast'$, and the identity map thus defines a homeomorphism between the two Lie groups $G=(\g,\ast)$ and $\textnormal{Car}(G)=(\g,\ast')$. If the linear grading is a grading, this is an isomorphism, and hence a quasi-isometry. But otherwise it is not likely to be a quasi-isometry; yet it is an SBE. This follows from computing $x\ast y-x\ast' y$, where $x=x_1+x_2+x_3$, $y=y_1+y_2+y_3$ are the decompositions in the linear grading and $[u_1,v_1]_k$, for $u_1,v_1\in\mathfrak{v}_1$, is the projection of the bracket in $\mathfrak{v}_k$:
\[x\ast y-x\ast' y=\frac12[x_1,y_1]_3.\]
To check the metric properties, we fix a norm on $\g$ such that the bracket $[\cdot,\cdot]_3$ is submultiplicative and use the Guivarch norm:
\[\lfloor u\rfloor=\|u_1\|+\|u_2\|^{1/2}+\|u_3\|^{1/3}.\]
Then from we obtain the estimate
\[\lfloor x\ast y-x\ast' y\rfloor= \left\lfloor\!\frac{1}{2}[x_1,y_1]_3\!\right\rfloor= \left\|\frac12[x_1,y_1]_3\right\|^{1/3}\!\!\!\le (\lfloor x_1\rfloor\lfloor y_1\rfloor)^{\frac23}\le \max(\lfloor x\rfloor,\lfloor y\rfloor)^{\frac23}.\]

Then for any distances $d,d'$ induced by left-invariant Riemannian metrics on $G$ and $\textnormal{Car}(G)$, this is a good approximation of the length, in the sense that there exist constants $C\ge 1$, $C'\ge 0$ such that for all $u$
\[d(1,u),\,d'(1,u)\in \big[C^{-1}\lfloor u\rfloor-C',C\lfloor u\rfloor+C'\big],\]
abridged as $\approxeq\lfloor u\rfloor$ Hence
\[d'(x,y)=d'(1,(-x)\ast'y)\approxeq\lfloor (-x)\ast'y\rfloor,\]
and, by subadditivity of the Guivarch norm, 
\[\lfloor (-x)\ast'y\rfloor\le \lfloor (-x)\ast y\rfloor +\lfloor (-x)\ast'y-(-x)\ast y\rfloor\le \lfloor (-x)\ast y\rfloor+\max(\lfloor x\rfloor,\lfloor y\rfloor)^{2/3}.\]
Therefore we obtain
\[d'(x,y)\le C^2d(x,y)+O\big((d(1,x)+d(1,y))^{2/3}\big),\]
as well as the same inequality with $d$ and $d'$ switched.
This means that the identity map $(\g,\ast)\to (\g,\ast')$ is an $O(r^{2/3})$-SBE.

\subsection{On the 4-step nilpotent case}\label{part4}

We can work in a similar fashion in general; in the 4-step nilpotent case, let us also compute $x\ast'y-x\ast y$; note that it only involves terms of degree $\le 3$ in the BCH formula, because terms of degree 4 will cancel, for the same reason as terms of degree 3 canceled in the previous computation.

\begin{align*}
x\ast'y-x\ast y=&\frac12[x_1,y_1]_3+\frac12[x_1,y_1]_4+\frac12([x_1,y_2]_4+[x_2,y_1]_4)\\ +& \frac1{12}([x_1,[x_1,y_1]_3]+[y_1,[y_1,y_1]_3]+[x_1,[x_1,y_1]_2]_4+[y_1,[y_1,x_1]_2]_4).\end{align*}

This is a sum of four terms (as gathered in parentheses), say $M_1+M_2+M_3+M_4$, which metrically behave differently. For $r=\max(\lfloor x\rfloor, \lfloor y\rfloor)$, we have 
\[\lfloor M_1\rfloor\le r^{2/3},\quad \lfloor M_2\rfloor\le r^{1/2},\lfloor M_3\rfloor\le r^{3/4},\quad\lfloor M_4\rfloor\le r^{3/4}.\]

Therefore the same argument shows that the identity map is an $O(r^{3/4})$-SBE and it is not hard to check in general, in the $c$-step nilpotent case, that we obtain an $O(r^{1-c^{-1}})$-SBE. However, this can be improved, because it can happen that some of the above terms vanish. For instance, if $M_2=M_3=M_4=0$; in this case we obtain an $O(r^{1/2})$-SBE. This occurs if and only if the linear grading satisfies $[\mathfrak{v}_1,\mathfrak{v}_1]\subset\mathfrak{v}_2\oplus\mathfrak{v}_4$ and $[\mathfrak{v}_1,\mathfrak{v}_2]\subset\mathfrak{v}_3$. 

This motivates to develop some set-up, so as to express properties asserting that a linear grading satisfies some partial algebra grading conditions. Then one has to wonder whether a nilpotent Lie algebra admits a grading with such conditions. Actually, to avoid idle formalism, we need to introduce conditions that are computably checkable, which will be expressed in terms of higher derivations and will then be reflected as grading conditions. 

We will come back to the 4-step nilpotent case to illustrate the general definitions.

\subsection{Weak derivation conditions}\label{wdc}

Let us begin by some motivation. A Lie algebra, say finite-dimensional over a field of characteristic zero, is Carnot if it admits a grading such that the Lie algebra is generated by elements of degree 1. It is convenient to observe that this is equivalent to be a nilpotent Lie algebra possessing a derivation inducing the identity map modulo the derived subalgebra. A useful observation is that the latter condition can be viewed as the existence of a solution to some (affine) system of linear equations. We are going to introduce similar weaker notions, which will prove relevant to the study of SBEs between nilpotent groups.

\subsubsection{Compatible linear gradings and grading operators}\label{graop}
Let $\g$ be a Lie algebra over a field $K$. 
Let $(\g^i)$ be the lower central series of $\g$. 

A {\it compatible linear grading} of $\g$ is a linear decomposition $\g=\bigoplus\mathfrak{v}_i$ such that $\g^i=\bigoplus_{j\ge i}\mathfrak{v}_j$ for all $i$.

Let $\mathcal{L}^\triangledown(\g)$ be the subalgebra of $\mathcal{L}(\g)$ of those linear endomorphisms stabilizing $\g^i$ for all $i$; we call its elements {\it pregrading operators} of $\g$. We define $\mathcal{D}(\g)$ as the affine subspace of those $D\in\mathcal{L}^\triangledown(\g)$ inducing multiplication by $i$ on $\g^i/\g^{i+1}$ for all $i$. We call elements of $\mathcal{D}(\g)$ {\it grading operators} of $\g$.

There is a canonical bijection between the set of compatible linear gradings and the space of grading operators, written precisely in the following proposition, whose proof is immediate:
\begin{prop}
Assume that $\g$ is nilpotent and that the ground field has characteristic zero (or some $p$ greater than the nilpotency length).
Then there is a canonical bijection between the set $\mathcal{D}(\g)$ of grading operators and the set of compatible linear gradings. Namely, every compatible linear grading comes from a grading operator $D$, defined to be multiplication by $i$ on $\g^i$. Conversely, for a grading operator $D\in\mathcal{D}(\g)$, define $\mathfrak{v}_i=\mathfrak{v}_i(D)$ as the kernel of $D-i$. Then $\g=\bigoplus\mathfrak{v}_i$ is a compatible linear grading. \qed
\end{prop}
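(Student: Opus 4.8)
The plan is to reduce the entire statement to a single linear-algebra fact: a grading operator $D\in\mathcal{D}(\g)$ is diagonalizable with eigenvalues exactly the integers occurring on the flag, where $c$ denotes the nilpotency length (so $\g^{c+1}=0$). Once this is in hand, the eigenspace decomposition $\mathfrak{v}_i=\ker(D-i)$ will directly produce a compatible linear grading, and the two assignments will be visibly mutually inverse. The forward direction is immediate: from a compatible linear grading $\g=\bigoplus\mathfrak{v}_i$, define $D$ to be multiplication by $i$ on $\mathfrak{v}_i$; since $\g^j=\bigoplus_{k\ge j}\mathfrak{v}_k$ is a sum of eigenspaces it is $D$-stable, and the induced action on $\g^j/\g^{j+1}\cong\mathfrak{v}_j$ is multiplication by $j$, so $D\in\mathcal{D}(\g)$.

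For the converse I would start from $D\in\mathcal{D}(\g)$. The key observation is that, because $D$ induces multiplication by $i$ on $\g^i/\g^{i+1}$, the operator $(D-i)$ maps $\g^i$ into $\g^{i+1}$. Composing down the flag $\g=\g^1\supseteq\g^2\supseteq\cdots\supseteq\g^{c+1}=0$ yields the telescoping identity
\[(D-c)(D-c+1)\cdots(D-1)=0\]
as an endomorphism of $\g$. Hence the minimal polynomial of $D$ divides $\prod_{i=1}^c(X-i)$, whose roots $1,\dots,c$ are pairwise distinct in $K$ precisely because the characteristic is zero or exceeds $c$. Therefore $D$ is diagonalizable with eigenvalues in $\{1,\dots,c\}$, and $\g=\bigoplus_i\ker(D-i)=\bigoplus_i\mathfrak{v}_i$. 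This telescoping step, together with the distinctness of the eigenvalues in $K$, is the only place where the hypotheses genuinely enter, and I expect it to be the crux of the argument.

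It then remains to check $\g^i=\bigoplus_{j\ge i}\mathfrak{v}_j$. Since $\g^i$ is $D$-invariant and $D$ is diagonalizable, $\g^i=\bigoplus_j(\g^i\cap\mathfrak{v}_j)$, and a bookkeeping of eigenvalues finishes the job. Writing $d_j=\dim(\g^j/\g^{j+1})$, the flag-preserving operator $D$ has eigenvalue $j$ with multiplicity $d_j$ on $\g$, so $\dim\mathfrak{v}_j=d_j$; restricting the same flag argument to $\g^i\supseteq\g^{i+1}\supseteq\cdots$ shows the eigenvalues of $D|_{\g^i}$ are exactly the $j\ge i$ with multiplicities $d_j$. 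Comparing with $\g^i\cap\mathfrak{v}_j\subseteq\mathfrak{v}_j$ and $\dim\mathfrak{v}_j=d_j$ forces $\g^i\cap\mathfrak{v}_j=\mathfrak{v}_j$ for $j\ge i$ and $\g^i\cap\mathfrak{v}_j=0$ for $j<i$, giving the claimed grading. Finally, the two constructions are mutually inverse essentially by definition: starting from a grading recovers $\ker(D-i)=\mathfrak{v}_i$, while starting from $D$ the associated operator acts as multiplication by $i$ on $\ker(D-i)$ and hence equals $D$ by diagonalizability.
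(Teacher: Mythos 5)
Your proof is correct and is essentially the argument the paper has in mind when it labels the proposition as immediate: the forward direction is a one-line check, and for the converse the telescoping identity $(D-c)(D-c+1)\cdots(D-1)=0$ together with the distinctness of $1,\dots,c$ in $K$ (this is exactly where the characteristic hypothesis enters) gives diagonalizability of $D$, after which the eigenspace bookkeeping makes the two assignments visibly inverse to each other. One small caveat: your concluding dimension count ($\g^i\cap\mathfrak{v}_j\subseteq\mathfrak{v}_j$ with $\dim(\g^i\cap\mathfrak{v}_j)=d_j=\dim\mathfrak{v}_j$ forces equality) silently uses finite-dimensionality of $\g$ -- harmless in the paper's context, but avoidable by the direct observation that $\ker(D-j)\subseteq\g^j$: if $x\in\ker(D-j)$ lay in $\g^k\setminus\g^{k+1}$ with $k<j$, then $Dx=jx$ and $Dx\equiv kx \pmod{\g^{k+1}}$ give $(j-k)x\in\g^{k+1}$, whence $x\in\g^{k+1}$ since $j-k\neq 0$ in $K$, a contradiction.
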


The interest of encoding compatible linear gradings as elements of $\mathcal{D}(\g)$ is that the latter is an affine space.

\begin{rem}
A grading operator can also be encoded in the one-parameter group of linear automorphisms $(\delta_t)_{t\in K^*}$ it generates; here $\delta_t$ acts by multiplication by $t^i$ on $\mathfrak{v}_i(D)$. The usefulness of such subgroups (called one-parameter groups of dilations) was made clear by Breuillard's interpretation of Guivarch's and Pansu's results \cite{Bre} (as well as Goodman's, although Breuillard was not aware of \cite{Goo}). Here we prefer the equivalent data of grading operators, because we will take advantage of the structure of affine space of the set of grading operators. 
\end{rem}

\subsubsection{$\Delta_n$ and $n$-derivations}
Denote by $[x_1,\dots,x_n]$ the (left) iterated bracket $[x_1,[x_2,\dots,[x_{n-1},x_n]\cdots]]$. 

Let $\mathcal{L}^n(\g)$ be the space of $K$-multilinear maps $\g^n\to\g$ (for $n=1$, this is the algebra of linear endomorphisms of $\g$ and we simply denote it by $\mathcal{L}(\g)$). For $n\ge 2$, define a linear operator $\Delta_n$ from $\mathcal{L}(\g)$ to $\mathcal{L}^n(\g)$ by
\[(\Delta_n D)(x_1,\dots,x_n)=D[x_1,\dots,x_n]-\]\[([Dx_1,x_2,\dots,x_n]+[x_1,Dx_2,x_3,\dots,x_n]+\dots +[x_1,\dots,x_{n-1},Dx_n]).\]

Note that $\Delta_nD$ is alternating in the last two variables $(x_{n-1},x_n)$.
By definition, $D$ is a derivation if $\Delta_2D=0$. As a generalization, elements of the kernel of $\Delta_n$ are called $n$-{\it derivations} (as introduced in \cite{Abd}, although the notion of 3-derivation, or triple derivation, occurred much earlier in \cite{Lis}). Note that every derivation is an $n$-derivation for all $n\ge 2$. In the sequel, it will be convenient to deal with $\Delta_n$ and not only with its kernel.

\subsubsection{Weak Carnot conditions}
We generalize the notion of being an $n$-derivation in two ways. Being an $n$-derivation for $D\in\mathcal{L}^\triangledown(\g)$ can be rewritten as $(\Delta_nD)(\g,\dots,\g)=0$. First, we allow this to hold modulo some term of the lower central series; second, we restrict the variables to belong to some given terms of the lower central series.

Namely, fix $n\ge 2$ and an $n$-tuple of integers $\wp=(\wp_1,\dots,\wp_n)$ with each $\wp_k\ge 1$ and $j\ge 1$; write $|\wp|=\sum_k\wp_k$. Then define $\mathcal{L}_{(\wp_1,\dots,\wp_n)}^j(\g)$ as the linear subspace of $\mathcal{L}(\g)$ consisting of those pregrading operators $D\in\mathcal{L}^\triangledown(\g)$ such that 
\[\Delta_nD(\g^{\wp_1},\dots,\g^{\wp_n})\subset \g^{j+1}.\]

In particular, for $\wp=(1,\dots,1)$ ($n$ times), this is the space of elements of $\mathcal{L}^\triangledown(\g)$ inducing an $n$-derivation of $\g/\g^{j+1}$. Also it is immediate that $\mathcal{L}_{(\wp_1,\dots,\wp_n)}^j(\g)=\mathcal{L}^\triangledown(\g)$ as soon as $j\le|\wp|$. Also, if $\g$ is $c$-step nilpotent, $\mathcal{L}_{(\wp_1,\dots,\wp_n)}^j(\g)=\mathcal{L}_{(\wp_1,\dots,\wp_n)}^c(\g)$ for all $j\ge c$. Accordingly, it is no restriction to consider only those $(\wp_1,\dots,\wp_n)$ and $j$ when 
\[3\le j\le c,\;\;2\le n<j,\;\;\wp_k\ge 1,\;\;|\wp|<j,\;\;\wp_{n-1}\le \wp_n.\]
For given $j$, write $\mathcal{T}_j$ as the set of such $n$-tuples. Thus
\[\mathcal{T}_2=\emptyset;\;\mathcal{T}_3=\{(1,1)\};\;\mathcal{T}_4=\{(1,1),(1,2),(1,1,1)\},\]
\[\mathcal{T}_5=\mathcal{T}_4\cup\{(1,3),(2,2),(1,1,2),(2,1,1),(1,1,1,1)\}\dots\]
Let $\mathcal{S}_c$ denote the set of pairs $(\wp|j)$ where $j$ ranges over $\{3,\dots,c\}$ and $\wp$ ranges over $\mathcal{T}_j$, and $\mathcal{S}$ the union of all $\mathcal{S}_c$. Here the sign $|$ just replaces a comma for the sake of readability: an element of $\mathcal{S}$ will be denoted as $(\wp_1,\dots,\wp_n|j)$ rather than $((\wp_1,\dots,\wp_n),j)$.

\begin{defn}
Let $\g$ be a Lie algebra, and denote $\mathcal{D}_\wp^j(\g)=\mathcal{D}(\g)\cap\mathcal{L}_\wp^j(\g)$. Given any subset $A\subset\mathcal{S}$, we say that $\g$ is $A$-derivable if \[\mathcal{D}^A(\g):=\bigcap_{(\wp|j)\in A}\mathcal{D}^j_\wp(\g)\neq\emptyset;\]
we call such $D$ an $A$-derivation.
\end{defn}

Note that we assumed nothing about $\g$ or the ground field, although this definition is (so far) only motivated by the study of nilpotent Lie algebras. Actually, if we define $\g^\infty=\bigcap_n\g^n$, then it follows from the definition that $\g$ is $A$-derivable if and only if $\g/\g^\infty$ is $A$-derivable.

Let us clarify the intuition that it is enough to consider $A\subset\mathcal{S}_c$ for $c$-step nilpotent Lie algebras. Namely, given any $A\subset\mathcal{S}$, define $A_c$ as the ``projection" of $A$ on $\mathcal{S}_c$, that is, the set of $(i|\min(j,c))\in\mathcal{S}_c$ when $(i|j)$ ranges over $A$ (if $c\le 2$, just define $A_c=\emptyset$). Then it is immediate that a $c$-step nilpotent Lie algebra is $A$-derivable if and only if it is $A_c$-derivable. In particular, for given $c$, this gives only finitely many definitions.

Let us emphasize two particular cases:
\begin{itemize}
\item Every Lie algebra $\g$ is $\emptyset$-derivable.
\item A $c$-step nilpotent Lie algebra $\g$ over a field of characteristic zero is $\mathcal{S}$-derivable if and only if it is $\{(1,1|c)\}$-derivable, if and only if it is Carnot. Indeed, Carnot is then easily seen to be equivalent to the existence of a derivation inducing the identity on the abelianization \cite[Lemma 3.10]{gralie}.
\end{itemize}

Thus, for a nilpotent Lie algebra, the various conditions of being $A$-derivable are weakenings of being Carnot. 

$\mathcal{S}_3$ is reduced to $\{(1,1|3)\}$; the first new notions appear for $c=4$, where
\[\mathcal{S}_4=\{(1,1|3),(1,1|4),(1,2|4),(1,1,1|4).\}\]
This yields in principle $2^4$ notions, but wiping obvious redundancies 
simplifies the picture. If $A$ contains $(1,1|4)$, $A$-derivable just means Carnot (and other elements of $A$ become redundant). Also, one easily shows the inclusion $\mathcal{D}_{(1,1,1)}^4(\g)\subset\mathcal{D}_{(1,1)}^3(\g)\cap\mathcal{D}_{(1,2)}^4(\g)$. In particular, when $A$ contains both $(1,1|3)$ and $(1,2|4)$, then $(1,1,1|4)$ is redundant.

\begin{rem}\label{113124}
A naive expectation would be that $\g$ is $A$-derivable if and only if it is $\{\alpha\}$-derivable for every $\alpha\in A$. If it were true, as small-dimensional example seem to illustrate, this would reduce to the study of $A$-derivability when $A$ is a singleton. However, this is not true: there is an 11-dimensional 4-step nilpotent Lie complex algebra (defined over the rationals) that is both $\{(1,1|3)\}$-derivable and $\{(1,2|4)\}$-derivable, but not $\{(1,1|3),(1,2|4)\}$-derivable; see \S\ref{contrex11}. This shows that beyond the Carnot case, there is, in general, no ``best" linear compatible grading.
\end{rem}

\begin{prop}\label{ext_scal}
For finite-dimensional Lie algebras $\g$ and $A\subset\mathcal{S}$, the property of being $A$-derivable is invariant under extensions of scalars. Namely, for any extension field $L$ of $K$, write $\g_L=L\otimes_K\g$; viewed it as a Lie algebra over $L$. Then $\g$ is $A$-derivable if and only if $\g_L$ is $A$-derivable.
\end{prop}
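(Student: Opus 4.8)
The plan is to recast the property of being $A$-derivable as the solvability of a single system of affine-linear equations whose coefficients lie in $K$, and then to invoke the elementary fact that such solvability is insensitive to field extension. By definition $\g$ is $A$-derivable precisely when the affine subset $\mathcal{D}^A(\g)=\mathcal{D}(\g)\cap\bigcap_{(\wp|j)\in A}\mathcal{L}_\wp^j(\g)$ of $\mathcal{L}(\g)$ is non-empty. Each subset being intersected is cut out by conditions that are affine-linear in the unknown operator $D$; the goal is to exhibit all of them as $K$-rational and to check that extending scalars to $L$ turns them into exactly the analogous conditions defining $\mathcal{D}^A(\g_L)$.

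First I would record the compatibility of the structural data with base change. Since $\g^{i+1}=[\g,\g^i]$ and $L\otimes_K(-)$ is exact, the lower central series satisfies $(\g_L)^i=L\otimes_K\g^i$; in particular each $\g^i$ is a $K$-subspace and only finitely many distinct terms occur. For finite-dimensional $\g$ one has the canonical identification $\mathcal{L}(\g_L)=\mathrm{End}_L(\g_L)=L\otimes_K\mathrm{End}_K(\g)=L\otimes_K\mathcal{L}(\g)$, and the operator $\Delta_n$ attached to $\g_L$ is the $L$-linear extension of the one attached to $\g$, both being built from the same $K$-rational structure constants. Using these identifications, each defining condition is manifestly $K$-affine-linear in $D$: stabilising $\g^i$ is the vanishing of the composite $\g^i\hookrightarrow\g\xrightarrow{D}\g\twoheadrightarrow\g/\g^i$; inducing multiplication by $i$ on $\g^i/\g^{i+1}$ is the affine condition $(D-i)(\g^i)\subset\g^{i+1}$; and membership in $\mathcal{L}_\wp^j(\g)$ is the vanishing, for each tuple of $\g^{\wp_k}$-basis vectors, of the $\g/\g^{j+1}$-component of $\Delta_nD(x_1,\dots,x_n)$, which is linear in $D$ with coefficients in $K$.

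With this in hand I would fix $K$-bases adapted to the flag $(\g^i)$ and write all of the above as one inhomogeneous linear system $S$ over $K$, in the coordinates of $D$ relative to a $K$-basis of $\mathcal{L}(\g)$. The linear forms defining these conditions span a finite-dimensional subspace of $\mathcal{L}(\g)^*$, so finitely many of them already cut out $\mathcal{D}^A(\g)$ and $S$ may be taken finite (even when $A$ is infinite). By the previous paragraph, $\mathcal{D}^A(\g)$ is the set of $K$-solutions of $S$, while $\mathcal{D}^A(\g_L)$ is the set of $L$-solutions of the same $S$ viewed over $L$. Finally I would invoke the Rouch\'e--Capelli criterion: a finite inhomogeneous system $Mx=b$ with $M,b$ over $K$ is solvable over $K$ iff $\mathrm{rank}(M)=\mathrm{rank}([M\,|\,b])$, and both ranks are computed by non-vanishing of minors and are therefore unchanged under the extension $K\subset L$. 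Hence $S$ is solvable over $K$ iff over $L$, that is, $\g$ is $A$-derivable iff $\g_L$ is; the forward implication is of course immediate (a $K$-solution base-changes to an $L$-solution), and the content is the converse.

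The one step requiring genuine care — rather than routine linear algebra — is the bookkeeping that ``$\Delta_nD(x_1,\dots,x_n)\in\g^{j+1}$'' is a $K$-rational linear condition whose $L$-avatar is exactly the corresponding condition for $\g_L$. This is what forces the choice of $K$-bases adapted simultaneously to all the $\g^i$, after which membership in $\g^{j+1}$ becomes the vanishing of prescribed coordinates and every condition base changes on the nose.
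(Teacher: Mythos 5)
Your proposal is correct and follows exactly the paper's own argument: the paper's proof is the one-line observation that the defining conditions form an affine-linear system with coefficients in $K$, whose solvability is unchanged under field extension. You have simply filled in the routine verifications (base-change compatibility of the lower central series and of $\Delta_n$, adapted bases, finiteness of the system, and the rank criterion) that the paper leaves implicit.
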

\begin{proof}
The condition can be written as a system of (affine) linear equations with coefficients in $K$; in particular it has a solution in $L$ if and only if it has a solution in $K$.
\end{proof}

\begin{rem}
That being $A$-derivable is given by a system of linear equations also means that it can be checked computationally (if we can input the Lie algebra constants).
\end{rem}

\subsubsection{Interpretation in terms of linear gradings}\label{inteli}
Let $\g$ be a nilpotent Lie algebra over a field of characteristic zero.
Any $D\in\mathcal{D}(\g)$ defines a grading $\g=\bigoplus\mathfrak{v}_i$ as in \S\ref{graop}. For $\wp=(\wp_1,\dots,\wp_n)$, write $|\wp|=\sum \wp_k$. Then for all $x_k\in\mathfrak{v}_{\wp_k}$, we have 
\[\Delta_nD(x_1,\dots,x_n)=D([x_1,\dots,x_n])-|\wp|[x_1,\dots,x_n].\]

Decompose the bracket according to this grading: for each given $\wp$ and all $x_k\in\mathfrak{v}_{\wp_k}$, write 
\[[x_1,\dots,x_n]=\sum_{j\ge |\wp|}[x_1,\dots,x_n]_j.\]
Then 
\[\Delta_nD(x_1,\dots,x_n)=\sum_{j>|\wp|}(j-|\wp|)[x_1,\dots,x_n]_j.\]

For $\g$ $c$-step nilpotent and $\wp=(\wp_1,\dots,\wp_n)$ the condition $D\in\mathcal{D}^c_\wp(\g)$ means that $\Delta_nD(\mathfrak{v}_{\wp'_1},\dots,\mathfrak{v}_{\wp'_n})=0$ for all $\wp'\ge\wp$ (that is, all $\wp'_1\ge \wp_1\dots \wp'_n\ge \wp_n$), which thus means that $[\mathfrak{v}_{\wp'_1},\dots,\mathfrak{v}_{\wp'_n}]_j=0$ for all $\wp'\ge \wp$ with $|\wp'|<c$ and all $j$ such that $|\wp'|<j\le c$.

In general, this means that this condition holds modulo $\g^{c+1}$. Let us write, for record
\begin{prop}\label{caractdji}
Assume that the field has characteristic zero and $\g$ is nilpotent. For any $\wp,j$, and any $D\in\mathcal{D}(\g)$ defining a linear grading $(\mathfrak{v}_i)$, the condition $D\in\mathcal{D}^j_\wp(\g)$ means that 
$[\mathfrak{v}_{\wp'_1},\dots,\mathfrak{v}_{\wp'_n}]_\ell=0$ for all $\wp'\ge\wp$ and all $\ell$ such that $|\wp'|<\ell\le j$.\qed
\end{prop}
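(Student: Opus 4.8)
The plan is to reduce the membership condition $\Delta_n D(\g^{\wp_1},\dots,\g^{\wp_n})\subset\g^{j+1}$ to its homogeneous components with respect to the grading $\g=\bigoplus\mathfrak{v}_i$ determined by $D$, and then to read off the characterization directly from the explicit formula for $\Delta_n D$ already computed in \S\ref{inteli}. The argument is essentially a bookkeeping of graded components, and no genuinely new ingredient beyond that section is needed.

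First I would use the defining property of a compatible linear grading, namely $\g^{\wp_k}=\bigoplus_{i\ge\wp_k}\mathfrak{v}_i$. By multilinearity of $\Delta_n D$, the condition $\Delta_n D(\g^{\wp_1},\dots,\g^{\wp_n})\subset\g^{j+1}$ is equivalent to $\Delta_n D(\mathfrak{v}_{\wp'_1},\dots,\mathfrak{v}_{\wp'_n})\subset\g^{j+1}$ for every multi-index $\wp'\ge\wp$, so that it suffices to test the condition on homogeneous arguments $x_k\in\mathfrak{v}_{\wp'_k}$.

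Next, for such arguments the formula of \S\ref{inteli} gives
\[\Delta_n D(x_1,\dots,x_n)=\sum_{\ell>|\wp'|}(\ell-|\wp'|)\,[x_1,\dots,x_n]_\ell,\]
where $[x_1,\dots,x_n]_\ell$ denotes the $\mathfrak{v}_\ell$-component; the sum starts at $\ell=|\wp'|+1$ because $[x_1,\dots,x_n]\in\g^{|\wp'|}$ and the coefficient $\ell-|\wp'|$ annihilates the lowest term. Since $\g^{j+1}=\bigoplus_{\ell\ge j+1}\mathfrak{v}_\ell$, the membership $\Delta_n D(x_1,\dots,x_n)\in\g^{j+1}$ amounts to the vanishing of every $\mathfrak{v}_\ell$-component with $\ell\le j$, that is, $(\ell-|\wp'|)[x_1,\dots,x_n]_\ell=0$ for all $\ell\le j$.

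Finally, I would observe that for $\ell\le|\wp'|$ these components already vanish, while for $|\wp'|<\ell\le j$ the scalar $\ell-|\wp'|$ is a nonzero positive integer, invertible because the characteristic is zero. Hence the condition is exactly $[x_1,\dots,x_n]_\ell=0$ for all $\ell$ with $|\wp'|<\ell\le j$, and letting the $x_k$ range over $\mathfrak{v}_{\wp'_k}$ and $\wp'$ range over all multi-indices $\ge\wp$ yields the stated characterization. The only point needing (mild) care is keeping track of which values of $\ell$ actually contribute and invoking characteristic zero to divide by $\ell-|\wp'|$, so that is where I would concentrate attention.
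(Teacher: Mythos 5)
Your proof is correct and follows essentially the same route as the paper: the proposition is stated with a \qed precisely because the preceding discussion in \S\ref{inteli} already reduces, by multilinearity and $\g^{\wp_k}=\bigoplus_{i\ge\wp_k}\mathfrak{v}_i$, to homogeneous arguments, applies the identity $\Delta_nD(x_1,\dots,x_n)=\sum_{\ell>|\wp'|}(\ell-|\wp'|)[x_1,\dots,x_n]_\ell$, and uses characteristic zero to cancel the nonzero integer coefficients. Your bookkeeping of which components contribute and where invertibility of $\ell-|\wp'|$ is needed matches the paper's argument exactly.
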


For instance, for $c=4$, $i\in\{(1,1),(1,2),(1,1,1)\}$. Then 
\begin{itemize}
\item $D\in\mathcal{D}^4_{(1,1)}(\g)$ means that $D$ is a Carnot grading (or equivalently satisfies $[\mathfrak{v}_1,\mathfrak{v}_1]\subset\mathfrak{v}_2$, $[\mathfrak{v}_1,\mathfrak{v}_2]\subset\mathfrak{v}_3$; 
\item $D\in\mathcal{D}^4_{(1,2)}(\g)$ means that $[\mathfrak{v}_1,\mathfrak{v}_2]\subset\mathfrak{v}_3$;
\item $D\in\mathcal{D}^4_{(1,1,1)}(\g)$ means that that $[\mathfrak{v}_1,[\mathfrak{v}_1,\mathfrak{v}_1]]\subset\mathfrak{v}_3$.
\item In addition, $D\in\mathcal{D}^3_{(1,1)}(\g)$ means that $[\mathfrak{v}_1,\mathfrak{v}_1]\subset\mathfrak{v}_2\oplus\mathfrak{v}_4$.
\end{itemize}

Thus, being an $A$-derivation encodes some of the conditions involved in being a Lie algebra grading. 

\begin{rem}Not all possible partial conditions are encoded in this way; for instance, for $c=4$ the condition $[\mathfrak{v}_1,\mathfrak{v}_1]\subset\mathfrak{v}_2$ does not appear in such a way, so Proposition \ref{ext_scal} does not apply to the existence of such a grading.

More precisely, in a 4-step nilpotent Lie algebra $\g$, the set of grading operators for which the corresponding grading satisfies $[\mathfrak{v}_1,\mathfrak{v}_1]\subset\mathfrak{v}_2$ is not always affine subspace of $\mathcal{D}(\g)$. For instance, let $\g$ be the standard filiform 5-dimensional Lie algebra, with basis $(e_i)_{1\le i\le 5}$ and nonzero brackets $[e_1,e_i]=e_{i+1}$, $i=2,3,4$. Consider the 1-dimensional affine subspace $V$ of $\mathcal{D}(\g)$ consisting of those operators $D_x$ for $x$ in the ground field, where $d_x$ is defined by $e_1\mapsto e_1$, $e_2\mapsto e_2+xe_3+e_4$, $e_3\mapsto 2e_3+xe_4$, $e_4\mapsto 3e_4$, $e_5\mapsto 4e_5$. Then if $(\mathfrak{v}_i)$ is the corresponding grading (for a given $x$), a basis of $\mathfrak{v}_1$ is $(e_1,e'_2(x))$, where $e'_2(x)=e_2-xe_3+\frac12(x^2-1)e_4$. Then computation yields \[D_x[e_1,e'_2(x)]-2[e_1,e'_x]=(x^2-1)e_5.\]
So $[\mathfrak{v}_1,\mathfrak{v}_1]\subset\mathfrak{v}_2$ if and only if $x^2=1$. This means that the set of grading operators whose grading satisfies $[\mathfrak{v}_1,\mathfrak{v}_1]\subset\mathfrak{v}_2$ does not intersect $V$ in an affine subspace, so is not an affine subspace.
\end{rem}

\subsubsection{The $e$-invariant}

We now focus on certain particular subsets $A$ of $\mathcal{S}$. Namely, for $r\in\R$, define $[r]=\{(\wp|j):|\wp|/j>r\}$. Here $\wp$ ranges over all $n$-tuples $(\wp_1,\dots,\wp_n)$ with all $\wp_k\ge 1$ such that $|\wp|<j$ and $n$ ranges over $\{2,\dots,j-1\}$. (Recall that $|\wp|$ means $\sum_k\wp_k$, and $(\wp|j)$ is just one way to write $(\wp,j)$). 

Since $\wp'\ge \wp$ implies $|\wp'|/j\ge |\wp|/j$, Proposition \ref{caractdji} implies the following.

\begin{prop}Assume that the field has characteristic zero and $\g$ is nilpotent.
A grading operator $D\in\mathcal{D}(\g)$ is an $[r]$-derivation if and only if, considering the corresponding linear grading $(\mathfrak{v}_i)$, for all $\wp$ and $j$ such that $|\wp|/j>r$, we have
\[[\mathfrak{v}_{\wp_1},\dots,\mathfrak{v}_{\wp_n}]\subset \mathfrak{v}_{|\wp|}\oplus\g^{j+1}.\]
\end{prop}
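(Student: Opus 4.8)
The plan is to unwind the definitions and observe that the statement is an immediate corollary of Proposition \ref{caractdji}. Recall that $[r]=\{(\wp|j):|\wp|/j>r\}$, so a grading operator $D$ is an $[r]$-derivation precisely when $D\in\mathcal{D}^j_\wp(\g)$ for every pair $(\wp|j)$ with $|\wp|/j>r$. Fixing the linear grading $(\mathfrak{v}_i)$ determined by $D$, the task is therefore to translate each individual membership $D\in\mathcal{D}^j_\wp(\g)$ into the bracket condition appearing in the statement, and then to reconcile the ``for all $\wp'\ge\wp$'' clause of Proposition \ref{caractdji} with the single inclusion $[\mathfrak{v}_{\wp_1},\dots,\mathfrak{v}_{\wp_n}]\subset\mathfrak{v}_{|\wp|}\oplus\g^{j+1}$ asserted here.

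First I would invoke Proposition \ref{caractdji}: for a fixed $(\wp|j)$, the condition $D\in\mathcal{D}^j_\wp(\g)$ says that $[\mathfrak{v}_{\wp'_1},\dots,\mathfrak{v}_{\wp'_n}]_\ell=0$ for all $\wp'\ge\wp$ and all $\ell$ with $|\wp'|<\ell\le j$. Taking $\wp'=\wp$ and letting $\ell$ range over $|\wp|<\ell\le j$ kills all layers $\mathfrak{v}_\ell$ of the bracket $[\mathfrak{v}_{\wp_1},\dots,\mathfrak{v}_{\wp_n}]$ in that range, which (since the lowest possible layer is $|\wp|$) is exactly the inclusion $[\mathfrak{v}_{\wp_1},\dots,\mathfrak{v}_{\wp_n}]\subset\mathfrak{v}_{|\wp|}\oplus\g^{j+1}$. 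So each single membership in $\mathcal{D}^j_\wp$ yields the displayed inclusion, and conversely the displayed inclusion for $\wp$ gives back the vanishing $[\mathfrak{v}_{\wp_1},\dots,\mathfrak{v}_{\wp_n}]_\ell=0$ for $|\wp|<\ell\le j$.

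The one point requiring a remark is the quantifier over $\wp'\ge\wp$: Proposition \ref{caractdji} phrases $D\in\mathcal{D}^j_\wp$ using all $\wp'\ge\wp$, whereas the present statement quantifies only over those $(\wp|j)$ lying in $[r]$, each giving its own single inclusion. Here I would use the monotonicity observation stated just before the proposition, namely that $\wp'\ge\wp$ implies $|\wp'|/j\ge|\wp|/j$. Consequently, if $(\wp|j)\in[r]$ then $(\wp'|j)\in[r]$ for every $\wp'\ge\wp$, so the family of displayed inclusions ranging over all $(\wp|j)\in[r]$ already contains, for each such $\wp$, the inclusions attached to every $\wp'\ge\wp$ with $|\wp'|<j$; these are exactly the data that Proposition \ref{caractdji} packages into the single membership $D\in\mathcal{D}^j_\wp$. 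Thus the conjunction of the displayed inclusions over all of $[r]$ is equivalent to the conjunction of the memberships $D\in\mathcal{D}^j_\wp$ over all $(\wp|j)\in[r]$, which is by definition the assertion that $D$ is an $[r]$-derivation.

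There is no genuine obstacle: the proof is a bookkeeping argument combining Proposition \ref{caractdji} with the closure of $[r]$ under the partial order $\wp\le\wp'$ (for fixed $j$). The only thing to handle with minor care is the boundary bookkeeping — confirming that the layers $\mathfrak{v}_\ell$ with $\ell>j$ are already absorbed into $\g^{j+1}$ and hence impose no condition, so that cutting off the range at $\ell\le j$ matches exactly the ``$\oplus\g^{j+1}$'' on the right-hand side. Since the hypotheses (characteristic zero, $\g$ nilpotent) are inherited directly from Proposition \ref{caractdji}, I expect the write-up to be a single short paragraph invoking that proposition together with the monotonicity remark.
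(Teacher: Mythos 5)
Your proposal is correct and follows exactly the paper's route: the paper derives this proposition in one line from Proposition \ref{caractdji} together with the observation that $\wp'\ge\wp$ implies $|\wp'|/j\ge|\wp|/j$, i.e.\ that $[r]$ is upward-closed in $\wp$ for fixed $j$, which is precisely the bookkeeping you spell out. Your treatment of the boundary cases (layers $\ell>j$ absorbed into $\g^{j+1}$, and tuples with $|\wp'|\ge j$ imposing no condition) is also consistent with the paper's conventions.
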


\begin{defn}\label{defe}
For a nilpotent Lie algebra $\g$ over a field of characteristic zero, define
$e_\g=\inf_{D\in\mathcal{D}(\g)}e_D$, where, for a given $D\in\mathcal{D}(\g)$,
\[e_D=\inf\{r\ge 0:\;D\textnormal{ is a }[r]\textnormal{-derivation}\}.\]
Thus $e_\g$ is also the infimum of $r\ge 0$ such that $\g$ is $[r]$-derivable.
\end{defn}

Note that $e_D=0$ means that $D$ is a derivation, so $e_\g=0$ means that $\g$ is Carnot.

Define $\mathcal{J}_c=\{0\}\cup\{i/j:2\le i<j\le c\}$. So
\[\mathcal{J}_2=\{0\},\;\mathcal{J}_3=\left\{0,\frac23\right\},\;\mathcal{J}_4=\left\{0,\frac12,\frac23,\frac34\right\}, \mathcal{J}_5=\left\{0,\frac25, \frac12,\frac35,\frac23,\frac34,\frac45\right\},\dots\]
Suppose that $\g$ is $c$-nilpotent. For $r\in [0,1]$, let $s$ be the largest number $\le r$ in $\mathcal{J}_c$. Then $D$ is an $[r]$-derivation if and only if $\g$ is an $[s]$-derivation. It follows that the infimums in Definition \ref{defe} are attained and belong to $\mathcal{J}_c$. It particular, they are $\le 1-c^{-1}$. We will see in Proposition \ref{alle} that all values of $\mathcal{J}_c$ can be achieved by some finite-dimensional $c$-step nilpotent Lie algebra, which can be chosen to be defined over $\Q$.

\begin{cor}\label{edcro}
Assume that the field has characteristic zero and $\g$ is nilpotent.
For every grading operator $D\in\mathcal{D}(\g)$, We have
\[[\mathfrak{v}_{\wp_1},\dots,\mathfrak{v}_{\wp_n}]\subset \mathfrak{v}_{|\wp|}\oplus\g^{\lceil|\wp|/e_D\rceil}.\qquad\qed\]
\end{cor}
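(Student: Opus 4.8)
The plan is to read Corollary \ref{edcro} as a direct consequence of the Proposition immediately preceding it, once we convert the inequality defining $[r]$-derivability into the correct target index. Fix a grading operator $D\in\mathcal{D}(\g)$ and the associated linear grading $(\mathfrak{v}_i)$. By Definition \ref{defe}, $D$ is an $[e_D]$-derivation: indeed the infimum defining $e_D$ is attained (the remark following Definition \ref{defe} shows $e_D\in\mathcal{J}_c$, and $D$ is an $[s]$-derivation for the largest $s\le r$ in $\mathcal{J}_c$, so taking $r=e_D$ gives that $D$ is actually an $[e_D]$-derivation). Thus by the Proposition before the corollary, for every tuple $\wp=(\wp_1,\dots,\wp_n)$ with $|\wp|/j>e_D$ we have $[\mathfrak{v}_{\wp_1},\dots,\mathfrak{v}_{\wp_n}]\subset\mathfrak{v}_{|\wp|}\oplus\g^{j+1}$.

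The remaining work is purely arithmetic: given $\wp$, I want to choose the optimal $j$ so that the conclusion reads $\g^{\lceil|\wp|/e_D\rceil}$. The condition $|\wp|/j>e_D$ is equivalent to $j<|\wp|/e_D$, i.e. (since $j$ is an integer) $j\le\lceil|\wp|/e_D\rceil-1$, and hence $j+1\le\lceil|\wp|/e_D\rceil$. First I would take the largest admissible integer, namely $j=\lceil|\wp|/e_D\rceil-1$, which satisfies $|\wp|/j>e_D$; for this choice $\g^{j+1}=\g^{\lceil|\wp|/e_D\rceil}$. Feeding this $j$ into the Proposition yields exactly
\[[\mathfrak{v}_{\wp_1},\dots,\mathfrak{v}_{\wp_n}]\subset\mathfrak{v}_{|\wp|}\oplus\g^{\lceil|\wp|/e_D\rceil},\]
which is the assertion. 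One should note the degenerate cases: if $e_D=0$ (i.e.\ $\g$ is Carnot and $D$ is a genuine derivation) then $\lceil|\wp|/e_D\rceil=\infty$ and the claim reduces to $[\mathfrak{v}_{\wp_1},\dots,\mathfrak{v}_{\wp_n}]\subset\mathfrak{v}_{|\wp|}$, which is just the statement that $D$ defines a grading; and if $\lceil|\wp|/e_D\rceil$ exceeds the nilpotency length $c$ then $\g^{\lceil|\wp|/e_D\rceil}=0$ and the containment is automatic.

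The only genuine subtlety, and what I expect to be the main point to verify carefully, is that the infimum in the definition of $e_D$ is attained, so that $D$ really is an $[e_D]$-derivation rather than merely an $[e_D+\eps]$-derivation for all $\eps>0$. This is guaranteed by the discreteness observation already recorded in the excerpt: the relevant ratios $|\wp|/j$ all lie in the finite set $\mathcal{J}_c$, so the set $\{r\ge 0:D\text{ is an }[r]\text{-derivation}\}$ is determined by finitely many threshold values and its infimum is a minimum. Granting this, the corollary follows with no further computation; there is no hard estimate to grind through, the content being entirely the translation of the ceiling function into the admissible range of $j$.
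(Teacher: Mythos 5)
Your proof is correct and is essentially the paper's own (implicit) argument: the corollary carries a \qed precisely because, once one knows the infimum defining $e_D$ is attained (so that $D$ is genuinely an $[e_D]$-derivation, via the finiteness of $\mathcal{J}_c$), the statement is just the preceding Proposition applied with $j=\lceil|\wp|/e_D\rceil-1$, which is exactly your ceiling-function translation. The boundary cases you flag are handled correctly, and the remaining one (when $j\le|\wp|$, so that the pair falls outside the index set) is harmless since there the inclusion is trivial: $[\mathfrak{v}_{\wp_1},\dots,\mathfrak{v}_{\wp_n}]\subset\g^{|\wp|}=\mathfrak{v}_{|\wp|}\oplus\g^{|\wp|+1}\subset\mathfrak{v}_{|\wp|}\oplus\g^{\lceil|\wp|/e_D\rceil}$.
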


\subsubsection{Example and miscellaneous facts on the $e$-invariant}\label{misce}

For a 3-step nilpotent Lie algebra, we have $e_\g\in\{0,2/3\}$, and it is $0$ if and only if it is Carnot. In general, it is always true that $e_{\g/\g^j}\le e_\g$ for all $j$. Thus, $e_\g<2/3$ implies that $\g/\g^4$ is Carnot.

For a 4-step nilpotent Lie algebra, we more precisely have $e_\g\in\{0,1/2,2/3,3/4\}$ and is determined by the following three facts, stated in terms of gradings and of derivations:
\begin{enumerate}
\item $e_\g=0$ $\Leftrightarrow$ $\g$ is Carnot $\Leftrightarrow$ there exists a derivation in $\mathcal{D}(\g)$;
\item\label{it2} $e_\g\in\{0,1/2\}$ $\Leftrightarrow$ there exists a linear grading such that $[\mathfrak{v}_1,\mathfrak{v}_1]\subset\mathfrak{v}_2\oplus\mathfrak{v}_4$ and $[\mathfrak{v}_1,\mathfrak{v}_2]\subset\mathfrak{v}_3$ $\Leftrightarrow$ there exists $D\in\mathcal{D}(\g)$ such that $\Delta_2D$ maps into $\g^4$ and maps $\g^1\times\g^2$ to zero;
\item\label{it3} $e_\g\in\{0,1/2,2/3\}$ $\Leftrightarrow$ there exists a linear grading such that $[\mathfrak{v}_1,[\mathfrak{v}_1,\mathfrak{v}_1]]\subset\mathfrak{v}_3$ and $[\mathfrak{v}_1,\mathfrak{v}_2]\subset\mathfrak{v}_3$ $\Leftrightarrow$ there exists $D\in\mathcal{D}(\g)$ such that $\Delta_2D$ maps $\g^1\times\g^2$ to zero and $\Delta_3D$ maps $\g^1\times\g^1\times\g^1$ to zero.
\end{enumerate}

Let us provide a comprehensive description for nilpotent Lie algebras of dimension $\le 6$ in characteristic zero, where all the previous cases actually occur. By Proposition \ref{ext_scal}, it is enough to consider Lie algebras over an algebraically closed field, where the classification is simpler.

In the table below, the first line concerns the Carnot case. The second line concerns the case of all non-Carnot 3-step nilpotent Lie algebras $\g$: up to isomorphism and for an algebraically closed field, there is one such Lie algebra in dimension 5, and 5 in dimension 6, including the product of the 5-dimensional one with a 1-dimensional abelian one. Then we list the remaining ones, namely those of nilpotency length $\ge 4$. We mention both the notation in De Graaf \cite{graaf} and Magnin \cite{Mag} ($L_{i,j}$ refers to \cite{graaf} and $\g_{i,j}$ to \cite{Mag}). In each case we write the law, writing only nonzero brackets, and shortening such a notation as $[e_i,e_j]=e_k$ to ``$ij:k$" for a choice of basis $(e_1,\dots)$. The columns $\tau$ gives the sequence of nonzero dimensions $(\g^i/\g^{i+1})$: the number of terms gives the nilpotency length $c$ and the sum of terms gives the dimension $d$; for $\tau$, we abbreviate, for instance, $(2,1,1)$ into 211. The column ``failure" gives a set $A$ (here, always a singleton) for which $\g$ fails to be $A$-derivable and yielding the lower bound for $e_\g$.\smallskip

\begin{tabular}{|l|l|l|l|l|l|}
\hline
     Lie algebra &  $\tau$ & d & c & $e_\g$ & failure \\
\hline\hline
   any Carnot &  & & & $0$ & $-$ \\
\hline
  any 3-step, non-Carnot & & & 3 & 2/3 & $\{11\vert3\}$\\
\hline
 $\g_{5,5}=L_{6,7}$:  12:3, 13:4, 14:5, 23:5 &2111 & 5 & 4 & 3/4  &$\{12\vert 4\}$\\
\hline
 $\g_{6,11}=L_{6,12}$:  12:4, 14:5, 15:6, 23:6 &3111 & 6 & 4 & 1/2&$\{11\vert4\}$\\
\hline
 $\g_{6,12}=L_{6,11}$:  12:4, 14:5, 15:6, 23:6, 24:6 &3111 & 6 & 4 & 3/4&$\{12\vert4\}$\\
\hline
 $\g_{6,13}=L_{6,13}$:  12:4, 14:5, 15:6, 23:5, 43:6 &3111 & 6 & 4 & 3/4&$\{12\vert4\}$\\
\hline
 $\g_{6,17}=L_{6,17}$:  12:3, 13:4, 14:5, 15:6, 23:6 &21111 & 6 & 5 & 3/5&$\{12\vert5\}$\\
\hline
 $\g_{6,19}=L_{6,15}$:  12:3, 13:4, 14:5, 15:6, 23:5, 24:6 &21111 & 6 & 5 & 4/5&$\{13\vert5\}$\\
\hline
 $\g_{6,20}=L_{6,14}$:  12:3, 13:4, 14:5, 25:6, 23:5, 43:6 &21111 & 6 & 5 & 4/5&$\{13\vert5\}$\\
\hline
\end{tabular}

\smallskip
It is easy to see that taking the direct product with an abelian Lie algebra does not affect the notion of $A$-derivability (in either direction), and hence does not the value of $e$. So the product of $\g_{5,5}$ with a 1-dimensional Lie algebra was omitted in this table (it has $e=3/4$).

Let us illustrate the proof (of the value of $e_\g$) in only the case of $\g=\g_{6,11}$, since all others are similar: denoting by $(v_i)_{1\le i\le 6}$ basis, the linear compatible grading for which the basis elements $v_1,v_2,v_3$ have degree 1, $v_4$ has degree 2, $v_5$ has degree 3 and $v_6$ has degree 4, satisfies all algebra grading conditions except for $[\mathfrak{v}_1,\mathfrak{v}_1]$, which is contained in $\mathfrak{v}_2\oplus\mathfrak{v}_4$. Thus it defines a $[r]$-derivation for all $r>1/2$, and hence $e_\g\le 1/2$. But there is no grading defining a $[1/2]$-derivation. Indeed, the corresponding new grading should satisfy $[\mathfrak{v}_1,\mathfrak{v}_1]\subset\mathfrak{v}_2$. But there should be some elements $v_2+w$, $v_3+x$ of degree 1 with $w,x$ in the derived subalgebra $\g^2$. Then, since both $v_2$ and $v_3$ centralize the derived subalgebra which is abelian, we obtain $[v_2+w,v_3+x]=v_6$, a contradiction, showing $e_\g\ge 1/2$. Another explicit case where we will perform such a verification is given in \S\ref{contrex11}.

\begin{rem}
It would have been tempting to restrict all this discussion to the case of $\wp$ of length $2$, that is, ignore $n$-derivations for $n\ge 3$. But although this is not visible in the above table, this would affect the value of $e_\g$, including in the 4-step nilpotent case. Indeed, consider the 7-dimensional Lie algebra denoted by $\g_{7,1,2(i_1)}$ in \cite{Mag}. With the above conventions, its law can be written as
\[\textnormal{12:4, 14:5, 24:6, 15:7, 26:7, 13:6, 23:5}.\]
Then this Lie algebra is $\{(1,2|4)\}$-derivable but not $\{(1,1,1|4)\}$-derivable (in particular $e_\g=3/4$). The more complicated example of \S\ref{contrex11} also satisfies this, but unlike this one, is also $\{(1,1|3)\}$-derivable.
\end{rem}

\begin{prop}\label{alle}
For every $c\ge 3$, all $3\le j\le c$ and all $i\in\{2,\dots,j-1\}$, there exists a finite-dimensional $c$-step nilpotent Lie algebra (defined over $\Q$) such that $e_\g=i/j$.
\end{prop}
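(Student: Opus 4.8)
The goal of Proposition~\ref{alle} is to realize every admissible value $i/j$ with $2\le i<j\le c$ as $e_\g$ for some $c$-step nilpotent Lie algebra over $\Q$. My plan is to construct, for each fixed target $(i,j,c)$, an explicit Lie algebra assembled from two building blocks: a \emph{thickness} block that forces the nilpotency length to be exactly $c$, and a \emph{defect} block that pins down $e_\g=i/j$ by simultaneously (a) forbidding any better (smaller $r$) compatible grading, and (b) admitting a grading achieving exactly the value $i/j$. The essential phenomenon to reproduce is the one in the worked example $\g_{6,11}$ (with $e_\g=1/2$ coming from a bracket $[\mathfrak{v}_1,\mathfrak{v}_1]$ landing in $\mathfrak{v}_2\oplus\mathfrak{v}_4$ that cannot be corrected): I want a bracket of total degree $i$ whose output is unavoidably ``pushed up'' to degree $j$, giving ratio $|\wp|/j=i/j$.

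First I would build the upper bound $e_\g\le i/j$. Take a standard $c$-step block (e.g.\ a filiform-type chain $[e_1,f_k]=f_{k+1}$ producing layers of degrees $1,2,\dots,c$) to guarantee length exactly $c$, and adjoin generators in degree $1$ together with one extra relation of the shape $[\text{two/several degree-}1\text{ or prescribed elements}]=z$, where $z$ is a designated element placed in degree $j$. Concretely, to realize the tuple $\wp$ with $|\wp|=i$ and target $j$ (recall $\mathcal{T}_j$ contains tuples of length $n\in\{2,\dots,j-1\}$ with $|\wp|=i<j$), I would introduce elements $x_1,\dots,x_n$ with prescribed degrees $\wp_1,\dots,\wp_n$ and a relation $[x_1,\dots,x_n]=z$ with $\deg z=j$. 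The natural linear grading assigning these degrees satisfies all grading conditions $[\mathfrak{v}_{\wp'}]\subset\mathfrak{v}_{|\wp'|}$ except that this single iterated bracket of total degree $i$ lands in $\mathfrak{v}_j$ rather than $\mathfrak{v}_i$; by Proposition~\ref{caractdji} and the definition of $[r]$, this $D$ is then an $[r]$-derivation for every $r>i/j$, giving $e_D=i/j$, hence $e_\g\le i/j$.

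The harder half is the lower bound $e_\g\ge i/j$, i.e.\ showing \emph{no} compatible grading does better. This is exactly the step I expect to be the main obstacle, since the set of compatible linear gradings is a full affine space $\mathcal{D}(\g)$ and one must rule out every correction $x_k\mapsto x_k+(\text{higher terms})$. The strategy, mirroring the $\g_{6,11}$ argument, is to design the algebra so the offending bracket $[x_1,\dots,x_n]=z$ is \emph{rigid}: any attempt to lower the degree of $z$ by modifying the $x_k$ or by changing which elements have degree $\wp_k$ must reproduce the same nonzero $z$-component. I would arrange this by making the relevant $x_k$ centralize (or nearly centralize) the subalgebra in which the correcting terms live, and by making $z$ lie deep in the center so that no available bracket of strictly smaller total degree can produce a compensating $z$. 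Then for any $D\in\mathcal{D}(\g)$ the corresponding grading still has $[\mathfrak{v}_{\wp_1},\dots,\mathfrak{v}_{\wp_n}]\not\subset\mathfrak{v}_i\oplus\g^{j+1}$ with the obstruction surviving at level $j$, so $D$ fails to be an $[r]$-derivation for any $r<i/j$; combined with the fact (stated before Corollary~\ref{edcro}) that the infimum defining $e_\g$ is attained in $\mathcal{J}_c$, this forces $e_\g\ge i/j$.

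Finally I would verify the bookkeeping: that the construction is genuinely $c$-step nilpotent (the thickness block ensures $\g^c\ne0$ while all relations truncate at length $c$), that it is defined over $\Q$ (all structure constants are $0$ or $1$), that the Jacobi identity holds for the prescribed brackets (checked on basis triples, routinely arranged by placing $z$ central), and that the failure set is precisely $\{(\wp\,|\,j)\}$ with $|\wp|/j=i/j$. Since $e_\g$ only depends on $\g/\g^{\infty}$ and is invariant under direct products with abelian factors (as noted after the table), I may freely pad dimensions to make the Jacobi verification and the centralizer conditions clean. I would likely present one uniform family covering all $(i,j,c)$ rather than case-splitting, as the filiform chain plus a single extra rigid relation is flexible enough to hit every ratio in $\mathcal{J}_c$.
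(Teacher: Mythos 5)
Your overall strategy---an explicit algebra built from a filiform chain plus one ``defect'' bracket of total degree $i$ forced to land in degree $j$, with the upper bound $e_\g\le i/j$ read off from the natural grading and the lower bound proved by a centralizer-rigidity argument modeled on $\g_{6,11}$---is exactly the paper's. But the concrete family you describe is not a Lie algebra in the cases $j<c$, and the failure occurs precisely at the point you left vague, namely what $z$ is. If $z$ is the mid-chain element $f_j$ of your length-$c$ thickness chain, the Jacobi identity fails: for the triple $(e_1,X,Y_{i-1})$ one gets $[e_1,[X,Y_{i-1}]]=[e_1,f_j]=f_{j+1}\neq 0$, while $[[e_1,X],Y_{i-1}]$ and $[X,[e_1,Y_{i-1}]]$ both vanish; so the defect target of a single chain can only be the top of that chain, i.e.\ $j=c$. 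If instead $z$ is a new element adjoined centrally (your ``placing $z$ central'' remark), then nothing forces $z\in\g^j$: what you get is a direct product of two standard filiform algebras, which is Carnot, so $e_\g=0$. The degree of $z$ cannot be ``designated'': any compatible grading satisfies $\mathfrak{v}_j\subset\g^j$, so the lower central series itself must push $z$ down to depth $j$, while $z$ is simultaneously a bracket of total degree only $i$.

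The paper resolves exactly this tension by giving $z$ its own chain of length exactly $j$: it takes the central product of two standard filiform algebras of dimensions $i+1$ and $j+1$, with basis $X,Y_1,\dots,Y_{i-1},U,V_1,\dots,V_j$, brackets $[X,Y_p]=Y_{p+1}$ ($p\le i-2$), $[U,V_q]=V_{q+1}$ ($q\le j-1$), and the identification $[X,Y_{i-1}]=V_j$. Then $V_j$ is central (so Jacobi is immediate), lies in $\g^j$ via the $U$-chain (so every grading operator $E$ satisfies $E(V_j)=jV_j$, using $\g^{j+1}=0$), and is also a bracket of total degree $i$. For $E\in\mathcal{D}_{(1,i-1)}^j(\g)$ one computes, with $N=E-D$,
\[E(V_j)=E([X,Y_{i-1}])=iV_j+[NX,Y_{i-1}]+[X,NY_{i-1}]=iV_j,\]
since $NX\in\g^2$ and $NY_{i-1}\in\g^i$ bracket trivially with $Y_{i-1}$ and $X$ respectively; hence $i=j$, a contradiction, giving $\mathcal{D}_{(1,i-1)}^j(\g)=\emptyset$ and $e_\g\ge i/j$. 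Note the price: this algebra has nilpotency length $j$, not $c$ (a feature of the paper's own construction). Your instinct to add a thickness block is the right repair if one insists on length exactly $c$, but it must be a \emph{direct} factor (a length-$c$ Carnot filiform), together with the small additional check that a Carnot direct factor does not change $e_\g$; it cannot be merged with the defect chain in the way you propose.
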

\begin{proof}
Consider the Lie algebra with basis $X,Y_1,\dots,Y_{i-1},U,V_1,\dots,V_j$ and nonzero brackets
\[[X,Y_p]=Y_{p+1},\;1\le p\le i-2,\;[X,Y_{i-1}]=V_j,\quad [U,V_q]=V_{q+1},\;1\le q\le j-1.\]
(This is a central product of two standard filiform Lie algebras, of dimension $i+1$ and $j+1$.) 
Consider the compatible linear grading for which $X,U$ have degree 1, $Y_p$ has degree $p$ and $V_q$ has degree $q$. Then it is immediate that it belongs to $\mathcal{D}_\wp^k$ for all $(\wp|k)$ such that either $k<j$ or $|\wp|\neq i$. In particular, $e_\g\le e_D\le i/j$, where $D$ is the corresponding grading operator. 

To prove that $e_\g\ge i/j$, it is enough to show that $\mathcal{D}_{(1,i-1)}^j(\g)=\emptyset$. Indeed, if $E$ belongs to it, then $E(V_j)=jY_j$, and also, defining $N=E-D$, we have
\[E(V_j)=E([X,Y_{i-1}])=[EX,Y_{i-1}]+[X,EY_{i-1}]=iV_j+[NX,Y_{i-1}]+[X,NY_{i-1}].\]
Since $NX$ is a linear combination of the $Y_p$ and $Z_q$, and $NY_{i-1}$ is a linear combination of the $V_q$, we see that both brackets vanish and thus $jV_j=E(V_j)=iV_j$. This is a contradiction and thus $E$ cannot exist. Thus $e_\g=i/j$. 
\end{proof}

\subsection{A counterexample}\label{contrex11}

Here we give the example illustrating Remark \ref{113124}; we write in a separate subsection because of the length of the proof, and because the reader can skip it in a first reading if not specifically interested.

Consider the 11-dimensional Lie algebra with basis
\[(a_1,a_2,b_1,b_2,c_1,c_2,c_3,d_1,d_2,e_1,e_2)\]
 and nonzero products (we omit the brackets)
\[a_1a_2=b_2,\quad a_1b_1=c_1,\;b_1a_2=c_2,\;a_1b_2=c_3,\quad a_1c_1=d_1,\;a_1c_2=b_1b_2=d_2;\]
\[b_1c_1=b_2c_1=a_1d_1=d_1a_2=e_1,\; a_1d_2=e_2,\; b_1c_3=e_1+e_2.\]

This is a priori an algebra with alternating product. We have to show that the Jacobi form, which is alternating trilinear, vanishes on triples of distinct basis elements. 

Note that this algebra has a grading in $\{1,\dots,5\}$ for which $a_i$ has degree 1, $b_i$ degree 2, etc. (This already shows that it is nilpotent.) So to check the Jacobi identity, it is enough to consider triples of distinct basis elements with total weight $\le 5$.

The only 3-element subsets of the basis involving $a_2$ and from which we can form at least one nonzero triple product are $\{a_1,a_2,b_1\}$ and $\{a_1,a_2,c_1\}$. In both cases, the Jacobi form vanishes anyway:

\[[a_1,[a_2,b_1]]+[a_2,[b_1,a_1]]+[b_1,[a_1,a_2]]=\qquad\qquad\qquad\qquad\]
\[\qquad\qquad-[a_1,c_2]-[a_2,c_1]+[b_1,b_2]=-d_2+0+d_2=0;\] 
\[[a_1,[a_2,c_1]]+[a_2,[c_1,a_1]]+[c_1,[a_1,a_2]]=\qquad\qquad\qquad\qquad\] 
\[\qquad\qquad0-[a_2,d_1]+[c_1,b_2]=0+e_1-e_1=0.\]

The only remaining 3-element subset of the basis with total weight $\le 5$ is $\{a_1,b_1,b_2\}$, and we have
\[[a_1,[b_1,b_2]]+[b_1,[b_2,a_1]]+[b_2,[a_1,b_1]]=\qquad\qquad\qquad\qquad\]
\[\qquad\qquad[a_1,d_2]-[b_1,c_3]+[b_2,c_1]=e_2-(e_1+e_2)+e_1=0.\]
Hence this is indeed a Lie algebra. 

\begin{prop}
The above Lie algebra is both $\{(1,1|3)\}$-derivable and $\{(1,2|4)\}$-derivable, but not $\{(1,1|3),(1,2|4)\}$-derivable. Actually, it is not $\{(1,1,1|4)\}$-derivable.
\end{prop}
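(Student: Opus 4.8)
The plan is to work directly with the grading operators and the explicit structure constants, using the characterization from Proposition~\ref{caractdji}: a grading operator $D\in\mathcal{D}(\g)$ lies in $\mathcal{D}^j_\wp(\g)$ precisely when the corresponding linear grading $(\mathfrak{v}_i)$ satisfies the vanishing conditions $[\mathfrak{v}_{\wp'_1},\dots]_\ell=0$ for all $\wp'\ge\wp$ and $|\wp'|<\ell\le j$. The natural grading (with $a_i$ in degree $1$, $b_i$ in degree $2$, the $c$'s in degree $3$, $d$'s in degree $4$, $e$'s in degree $5$) is already a genuine algebra grading, so it is a derivation and witnesses every single $A$-derivability condition simultaneously. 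The subtlety the proposition asserts is that there exist \emph{other} grading operators witnessing $\{(1,1|3)\}$ and $\{(1,2|4)\}$ separately, yet no single operator witnesses both at once. So the first step is to set up the affine space $\mathcal{D}(\g)$ explicitly: write a general grading operator as $D=D_0+N$ where $D_0$ is the diagonal operator of the natural grading and $N\in\mathcal{L}^\triangledown(\g)$ is a nilpotent correction lowering no degrees appropriately (i.e.\ $N$ strictly increases the filtration degree), parametrized by finitely many scalars.

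Next I would translate each of the three derivability conditions into linear equations on these parameters. For $\{(1,1|3)\}$-derivability I need $\Delta_2 D(\g^1,\g^1)\subset\g^4$, i.e.\ the degree-$3$ component of $D$ applied to brackets of degree-$1$ elements must match; concretely this controls the projections onto $c_1,c_2,c_3$. For $\{(1,2|4)\}$-derivability I need $\Delta_2D(\g^1,\g^2)\subset\g^5$, controlling the projection onto the degree-$4$ part ($d_1,d_2$). The key is that these two systems, taken individually, each have a solution (the natural grading being one, but I should exhibit the full solution sets to see they are nonempty and understand their structure), while their intersection is empty. The cleanest route is to solve the combined system and derive a contradiction: assume $D=D_0+N$ satisfies both, extract the modified degree-$1$ subspace $\mathfrak{v}_1$ spanned by corrected vectors $a_1+\dots$, $a_2+\dots$, $b_1+\dots$ (noting $b_1$ may acquire degree-$1$ status only through $N$, so care is needed with which basis vectors can sit in $\mathfrak{v}_1$), and show the forced bracket relations are inconsistent.

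The concrete obstruction should mirror the $\g_{6,11}$ computation in \S\ref{misce}: there, the contradiction came from a degree-$2$ generator centralizing an abelian ideal yet producing a nonzero top bracket. Here I expect the interplay to run through the relation $b_1c_3=e_1+e_2$ together with $a_1c_1=d_1$ and $a_1c_2=d_2$, since these are the brackets that couple the two would-be conditions. The plan is to show that enforcing $[\mathfrak{v}_1,\mathfrak{v}_1]\subset\mathfrak{v}_2\oplus\g^4$ (the $(1,1|3)$ condition) forces certain correction coefficients, which then make the $(1,2|4)$ condition $[\mathfrak{v}_1,\mathfrak{v}_2]\subset\mathfrak{v}_3\oplus\g^5$ fail on the pair producing $e_1$ versus $e_2$; the asymmetry encoded in $b_1c_1=b_2c_1=e_1$ but $a_1d_2=e_2$ and $b_1c_3=e_1+e_2$ is presumably exactly what blocks a simultaneous correction. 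For the final sentence of the proposition (not even $\{(1,1,1|4)\}$-derivable), I would use the inclusion $\mathcal{D}^4_{(1,1,1)}(\g)\subset\mathcal{D}^3_{(1,1)}(\g)\cap\mathcal{D}^4_{(1,2)}(\g)$ noted just before Remark~\ref{113124}: since the right-hand intersection is empty, the left side is a fortiori empty.

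The main obstacle I anticipate is bookkeeping rather than conceptual: the space of grading operators is genuinely multi-dimensional here (degree-preserving-up-to-filtration endomorphisms of an $11$-dimensional algebra), and one must be disciplined about which off-diagonal entries are permitted and about the fact that $\mathfrak{v}_1$ in a corrected grading need not be spanned by the obvious basis vectors. The real work is to verify that after imposing one condition the surviving freedom in $N$ is genuinely insufficient to kill the obstruction term for the other condition; I expect this to reduce, as in the \S\ref{misce} example, to a single scalar equation with no solution (an inconsistency like $1=0$ on the coefficient of some $e_i$), and the challenge is isolating that scalar cleanly amid the many parameters.
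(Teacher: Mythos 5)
Your proposal has a fatal error at its foundation. The grading with $a_i$ in degree $1$, $b_i$ in degree $2$, the $c$'s in degree $3$, $d$'s in degree $4$, $e$'s in degree $5$ is indeed a Lie algebra grading (the paper uses it to verify the Jacobi identity), but it is \emph{not} a compatible linear grading, and its diagonal operator is \emph{not} a grading operator: $b_1$ never occurs as a bracket, so $b_1\notin\g^2=[\g,\g]$; the lower central series is $\g^2=\langle b_2,c_1,c_2,c_3,d_1,d_2,e_1,e_2\rangle$, $\g^3=\langle c_3,d_1,d_2,e_1,e_2\rangle$, $\g^4=\langle e_1,e_2\rangle$ (nilpotency length $4$, not $5$), so $\g/\g^2$ has basis $a_1,a_2,b_1$ and every $D\in\mathcal{D}(\g)$ must act as the identity on $b_1$ modulo $\g^2$, whereas your operator multiplies $b_1$ by $2$. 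Worse, your claim that this operator ``witnesses every single $A$-derivability condition simultaneously'' would make $\g$ Carnot, hence $A$-derivable for \emph{all} $A$, flatly contradicting the proposition you are proving. The same error invalidates your parametrization $D=D_0+N$, and it leaves both positive assertions without any witness. The actual witnesses are: the compatible grading with $a_1,a_2,b_1$ of degree $1$, $b_2,c_1,c_2$ of degree $2$, $c_3,d_1,d_2$ of degree $3$, $e_1,e_2$ of degree $4$, whose unique defect is $[b_1,c_1]=e_1$ (type $1+2\to 4$), so it is a $(1,1|3)$-derivation; and the same grading with $b_1$ replaced by $b_1'=b_1-b_2$, which kills that defect ($[b_1',c_1]=e_1-e_1=0$) at the cost of the new defect $[a_1,b_1']=c_1-c_3$ (type $1+1\to 3$), so it is a $(1,2|4)$-derivation. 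Producing this substitution is the real content of the positive half, and nothing in your plan generates it.

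Your deduction of the last sentence is also broken. You invoke the displayed inclusion $\mathcal{D}^4_{(1,1,1)}(\g)\subset\mathcal{D}^3_{(1,1)}(\g)\cap\mathcal{D}^4_{(1,2)}(\g)$ literally, but that display is a typo with the two sides swapped: it is incompatible with the sentence immediately following it in the paper (redundancy of $(1,1,1|4)$ when $A$ contains the other two requires the \emph{reverse} inclusion), and as stated it is false in general --- for a $3$-step non-Carnot algebra the condition defining $\mathcal{D}^4_{(1,1,1)}$ is vacuous, so $\mathcal{D}^4_{(1,1,1)}=\mathcal{D}(\g)\neq\emptyset$ while $\mathcal{D}^3_{(1,1)}=\emptyset$, and taking a product with a standard filiform algebra gives $4$-step counterexamples too. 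The correct inclusion is $\mathcal{D}^3_{(1,1)}(\g)\cap\mathcal{D}^4_{(1,2)}(\g)\subset\mathcal{D}^4_{(1,1,1)}(\g)$, proved via $\Delta_3D(x,y,z)=\Delta_2D(x,[y,z])+[x,\Delta_2D(y,z)]$, or in grading terms via $[\mathfrak{v}_1,\mathfrak{v}_2\oplus\mathfrak{v}_4]\subset\mathfrak{v}_3+\g^5=\mathfrak{v}_3$; this goes the wrong way for your ``a fortiori'' step, since emptiness of the pairwise intersection then says nothing about $\mathcal{D}^4_{(1,1,1)}$. The paper's logic is exactly the opposite of yours: it proves directly that $\mathcal{D}^4_{(1,1,1)}(\g)=\emptyset$, by taking $a=a_1+w$, $b=b_1+x$ in $\mathfrak{v}_1$ with $w,x\in\g^2$, writing $x=\lambda b_2+x'$ with $x'$ of higher weight, and computing $[b,[a,b]]=(1+2\lambda)e_1+\lambda e_2$, which by the $(1,1,1|4)$-condition must lie in $\mathfrak{v}_3\cap\g^4=0$, forcing $1+2\lambda=\lambda=0$, a contradiction; the non-derivability of the pair $\{(1,1|3),(1,2|4)\}$ is then the corollary, via the correctly oriented inclusion. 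Your instinct that the obstruction reduces to a single scalar inconsistency involving $[b_1,c_3]=e_1+e_2$ was sound, but as organized your argument proves neither the pair statement nor the $(1,1,1|4)$ statement.
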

\begin{proof}
It is straightforward that a compatible linear grading is obtained by taking $a_1,a_2,b_1$ to be of degree 1, $b_2,c_1,c_2$ of degree 2, $c_3,d_1,d_2$ of degree 3, and $e_1,e_2$ of degree 4. In particular, the nilpotency length is 4. 

The only basis relation not preserving this grading is $[b_1,c_1]=e_1$, which has the form ``$1+2=4$". In particular, the quotient by the fourth term of the central series is Carnot, and thus the Lie algebra is $\{(1,1|3)\}$-derivable.

We can now change the basis, by replacing $b_1$ by $b'_1=b_1-b_2$. We obtain another compatible linear grading by replacing $b_1$ with $b'_1$ in the previous description, which we now denote as $\bigoplus_{i=1}^4\mathfrak{v}_i$. Then we see that the latter defines a $(1,2|4)$-derivation, or equivalently that $[\mathfrak{v}_1,\mathfrak{v}_2]\subset\mathfrak{v}_3$. Indeed, in this new basis, the nonzero brackets are
\[a_1a_2=b_2,\quad a_1b'_1=c_1-c_3,\;b'_1a_2=c_2,\;a_1b_2=c_3,\quad a_1c_1=d_1,\;a_1c_2=b'_1b_2=d_2;\]
\[b_2c_1=a_1d_1=d_1a_2=e_1,\; a_1d_2=e_2,\; b'_1c_3=e_1+e_2.\]

We see now that the only basis relation not preserving the new linear grading is $[a_1,b'_1]=c_1-c_3$; in particular, $[\mathfrak{v}_1,\mathfrak{v}_2]\subset\mathfrak{v}_3$. Thus $\g$ is $\{(1,2|3)\}$-derivable.

(Note that if we replace $c_1$ by $c'_1=c_1-c_3$, then we obtain the new basis relation $[b'_1,c'_1]=-e_1-e_2$, and again the resulting grading operator, albeit being again a $\{(1,1|3)\}$-derivation, fails to define a $\{(1,2|3)\}$-derivation.)

Let us actually prove by contradiction that there is no grading $\g=\bigoplus_{i=1}^4\mathfrak{v}_i$ whose associated grading operator is a $\{(1,1|3),(1,2|4)\}$-derivation. This means that $[\mathfrak{v}_1,\mathfrak{v}_1]\subset \mathfrak{v}_2\oplus\mathfrak{v}_4$ and $[\mathfrak{v}_1,\mathfrak{v}_2]\subset \mathfrak{v}_3$. This implies $[\mathfrak{v}_1,[\mathfrak{v}_1,\mathfrak{v}_1]]\subset\mathfrak{v}_3$, i.e., there is a $\{(1,1,1|4)\}$-derivation; we are going to deduce a contradiction.

Under this contradictory assumption, $\mathfrak{v}_1$ contains two elements of the form $a=a_1+w$ and $b=b_1+x$, with both $x,w$ in $[\g,\g]$. Then $[b,[a,b]]=[b,a,b]$ has to be of degree 3. Now we compute:
\[[b,a,b]=[b_1,a_1,b_1]+[b_1,w,b_1]+[x,a_1,b_1]+[b_1,a_1,x]+(\dots),\]
where $(\dots)$ is a sum of brackets involving two terms of the derived subalgebra and an additional term, and thus belong to the fifth term of the lower central series, and hence vanishes. Also for the ordinary Lie algebra grading, $[b_1,w,b_1]$ has to be a sum of terms of degree $\ge 6$ and hence vanishes; for the same Lie algebra grading, we see that if we write $x=\lambda b_2+x'$ with $x'$ a sum of terms of degree $\ge 3$, then the terms involving $x'$ will also vanish. So we have
\[[b,a,b]=[b_1,a_1,b_1]+\lambda([b_2,a_1,b_1]+[b_1,a_1,b_2])\]
\[=e_1+\lambda(e_1+(e_1+e_2)).\]
Since the latter has to be of degree 3 and is also of degree 4, it vanishes, whence $(2\lambda+1)=\lambda=0$, a contradiction.
\end{proof}

\subsection{SBE of nilpotent groups}\label{su_nil}

If $G$ is a simply connected nilpotent Lie group, we write $e_G=e_\g$, where $\g$ is the Lie algebra of $G$, and $e_\g$ is introduced in Definition \ref{defe}. The following theorem, when $e=1-c^{-1}$, is a quantitative version of Pansu's theorem describing the asymptotic cone \cite{Pan}. The contribution here is the formulation of the result in the context of SBEs, as well as the improvement of the exponent.

\begin{thm}\label{pansuq}
Let $G$ be a simply connected nilpotent Lie group and $G_\infty$ the associated Carnot Lie group, and $e=e_G$. Then $G$ is $O(r^e)$-SBE to $G_\infty$.
\end{thm}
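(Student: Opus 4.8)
The plan is to realize both $G$ and $G_\infty$ on the common underlying vector space $\g$ through the BCH formula and to prove that the identity map is the desired equivalence, exactly in the spirit of the warm-up computations of \S\ref{part3}--\S\ref{part4}, but now driven by an optimal grading operator. First I would fix a grading operator $D\in\mathcal{D}(\g)$ attaining the infimum $e_D=e_\g=e$ (such $D$ exists since the infimum in Definition \ref{defe} is attained and lies in $\mathcal{J}_c$), with its compatible linear grading $\g=\bigoplus_i\mathfrak{v}_i$. Writing $G=(\g,\ast)$ for the BCH law of the original bracket and $G_\infty=(\g,\ast')$ for the BCH law of the Carnot bracket (the projection of $[\cdot,\cdot]$ to diagonal components), the candidate map is $\mathrm{id}\colon(\g,\ast)\to(\g,\ast')$. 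I equip $\g$ with a norm for which the bracket is submultiplicative, introduce the Guivarch norm $\lfloor u\rfloor=\sum_i\|u_i\|^{1/i}$, and recall Guivarch's estimate $d(1,u)\approxeq\lfloor u\rfloor\approxeq d'(1,u)$ comparing both left-invariant Riemannian distances to this norm.

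The heart of the argument is the estimate (this is the content of the lemma of Goodman type, here with exponent $e$): for $r=\max(\lfloor x\rfloor,\lfloor y\rfloor)$,
\[\lfloor x\ast y-x\ast'y\rfloor\le Cr^e+C'.\]
To prove it I would expand $x\ast y-x\ast'y$ via BCH into a finite sum of differences of iterated brackets $[x_{(1)},\dots,x_{(n)}]$ of homogeneous components $x_{(k)}\in\mathfrak{v}_{\wp_k}$ of $x$ and $y$. A short induction shows that the iterated Carnot bracket equals the $\mathfrak{v}_{|\wp|}$-projection of the iterated true bracket (using $[\mathfrak{v}_{|a|},\g^{k}]\subset\g^{|a|+k}$ to discard the higher terms at each step). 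Since $\|x_{(k)}\|\lesssim r^{\wp_k}$, each bracket has norm $\lesssim r^{|\wp|}$, and by Corollary \ref{edcro} it lies in $\mathfrak{v}_{|\wp|}\oplus\g^{\lceil|\wp|/e\rceil}$; its $\mathfrak{v}_{|\wp|}$-part is exactly the $\ast'$-term and cancels, leaving a remainder $z\in\g^\ell$ with $\ell\ge|\wp|/e$. For $m\ge\ell$ one has $\|z_m\|^{1/m}\lesssim r^{|\wp|/m}\le r^{|\wp|/\ell}\le r^e$, so $\lfloor z\rfloor\lesssim r^e$; summing over the finitely many terms gives the bound.

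Granting this, I would conclude as in \S\ref{part3}. By subadditivity of the Guivarch norm,
\[\lfloor(-x)\ast'y\rfloor\le\lfloor(-x)\ast y\rfloor+\lfloor(-x)\ast'y-(-x)\ast y\rfloor\le\lfloor(-x)\ast y\rfloor+Cr^e+C',\]
whence $d'(x,y)\approxeq\lfloor(-x)\ast'y\rfloor\le C''d(x,y)+C'''(d(1,x)+d(1,y))^e+C''''$. As the difference $(-x)\ast y-(-x)\ast'y$ is symmetric in the two laws, the same bound holds with $d$ and $d'$ interchanged. Thus the identity is $O(r^e)$-Lipschitz in both directions, and being a bijection it is automatically $O(r^e)$-expansive and $O(r^e)$-surjective; hence it is an $O(r^e)$-SBE.

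The hard part will be the bookkeeping in the second paragraph: one must check that for every iterated-bracket term appearing in BCH the ``correct-degree'' contribution is genuinely the matching $\ast'$-term (so the cancellation is exact), and that every surviving remainder is controlled by Corollary \ref{edcro} with the \emph{uniform} exponent $e$. All the difficulty is thereby transferred to the algebraic preparation of \S\ref{wdc}—specifically the passage from the $[r]$-derivation property of the chosen $D$ to the containment $[\mathfrak{v}_{\wp_1},\dots,\mathfrak{v}_{\wp_n}]\subset\mathfrak{v}_{|\wp|}\oplus\g^{\lceil|\wp|/e\rceil}$—after which the metric part is a routine repetition of the $3$-step computation.
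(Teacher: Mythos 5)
Your proposal is correct and follows essentially the same route as the paper: choose a grading operator $D$ realizing $e_D=e_\g$, identify both groups with $\g$ via BCH, prove the Goodman-type estimate $\lfloor x\ast y - x\ast' y\rfloor \lesssim \max(1,\lfloor x\rfloor,\lfloor y\rfloor)^{e}$ by expanding BCH into homogeneous iterated brackets and invoking Corollary \ref{edcro} (this is exactly the paper's Lemma \ref{lgoodmane}), and conclude via Guivarch's comparison of $d$, $d_\infty$ with the Guivarch norm and its subadditivity (the paper's Theorem \ref{resbe}). The only differences are cosmetic: the sum rather than max form of the Guivarch norm, and your explicit induction showing the iterated Carnot bracket is the $\mathfrak{v}_{|\wp|}$-projection of the true iterated bracket, which the paper absorbs into the notation of \S\ref{inteli}.
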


This follows from a more precise result. Let $G$ be a simply connected nilpotent Lie group, $\g$ its Lie algebra. Let $D$ be a grading operator on $\g$ (see \S\ref{graop}), and $(\mathfrak{v}_i)$ the corresponding linear grading. For $x\in\mathfrak{v}_i$, $y\in\mathfrak{v}_j$, define $[x,y]^\infty$ as the projection of $[x,y]$ on $\mathfrak{v}_{i+j}$ modulo $\g^{i+j+1}$; extend it by bilinearity to $\g$; this is a Lie bracket. This is the standard way to construct the associated graded Carnot algebra. Let us emphasize (since this is a frequent point of confusion) that although $(\g,[\cdot,\cdot]^\infty)$, up to graded Lie algebra isomorphism, does not depend on the choice of $D$, the linear isomorphism given by the identity $(\g,[\cdot,\cdot])\to (\g,[\cdot,\cdot]^\infty)$ is very sensitive to it. At the level of Lie groups, it induces a homeomorphism $\Phi_D$ from $G$ onto $G_\infty$.
Since $e_\g=\min_De_D$, the following theorem entails Theorem \ref{pansuq}.

\begin{thm}\label{resbe}
Let $G$ be a simply connected nilpotent Lie group and $G_\infty$ the associated Carnot Lie group. For every grading operator $D$, the homeomorphism $\Phi_D: G\to G_\infty$ is an $O(r^{e_D})$-SBE.
\end{thm}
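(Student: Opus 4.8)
The plan is to reduce Theorem \ref{resbe} to a pointwise estimate on the difference between the two group laws, expressed in the Guivarch norm, exactly as in the warm-up computations of \S\ref{part3} and \S\ref{part4}. Recall that $G=(\g,\ast)$ with $\ast$ the BCH product of $[\cdot,\cdot]$, and $G_\infty=(\g,\ast^\infty)$ with $\ast^\infty$ the BCH product of $[\cdot,\cdot]^\infty$, while $\Phi_D$ is the identity map of $\g$. First I would fix a compatible linear grading $(\mathfrak{v}_i)$ associated to $D$, choose an adapted norm $\|\cdot\|$ on $\g$ making all the relevant bracket components submultiplicative, and work with the Guivarch norm $\lfloor u\rfloor=\sum_i\|u_i\|^{1/i}$, where $u_i$ is the $\mathfrak{v}_i$-component of $u$. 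The key facts I would invoke are: (i) $\lfloor\cdot\rfloor$ is subadditive (up to a multiplicative constant) and $\lfloor\delta_t u\rfloor=t\lfloor u\rfloor$ under the dilations generated by $D$; and (ii) by Guivarch's estimates, both $d(1,u)$ and $d^\infty(1,u)$ are $\approxeq\lfloor u\rfloor$ for the left-invariant Riemannian distances on $G$ and $G_\infty$.

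The heart of the argument is the estimate $\lfloor x\ast y - x\ast^\infty y\rfloor\le C\max(\lfloor x\rfloor,\lfloor y\rfloor)^{e_D}$. I would obtain this by decomposing $x\ast y-x\ast^\infty y$ into its BCH terms. Each term is an iterated bracket $[z_1,\dots,z_n]$ of homogeneous components $z_k\in\mathfrak{v}_{\wp_k}$, and the difference between the two laws retains only the components of this bracket lying in $\g^{j+1}$ for $j>|\wp|$ — that is, the components that $\Phi_D$ fails to preserve. By Corollary \ref{edcro}, these surviving components lie in $\g^{\lceil|\wp|/e_D\rceil}$, so the offending piece of $[z_1,\dots,z_n]_\ell$ has $\ell\ge|\wp|/e_D$, hence $\lfloor[z_1,\dots,z_n]_\ell\rfloor=\|[z_1,\dots,z_n]_\ell\|^{1/\ell}\le\big(\prod_k\|z_k\|\big)^{1/\ell}\le\big(\prod_k\lfloor z_k\rfloor^{\wp_k}\big)^{1/\ell}\le r^{|\wp|/\ell}\le r^{e_D}$, writing $r=\max(\lfloor x\rfloor,\lfloor y\rfloor)$ and using $\lfloor z_k\rfloor\le r$ together with $\|z_k\|\le\lfloor z_k\rfloor^{\wp_k}$. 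Summing over the finitely many BCH terms gives the claimed bound with $v(r)=r^{e_D}$.

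Once this pointwise estimate is in hand, the SBE property follows formally, as in \S\ref{part3}. Using left-invariance and subadditivity of the Guivarch norm I would write
\[d^\infty(x,y)=d^\infty(1,(-x)\ast^\infty y)\approxeq\lfloor(-x)\ast^\infty y\rfloor\le\lfloor(-x)\ast y\rfloor+\lfloor(-x)\ast^\infty y-(-x)\ast y\rfloor,\]
and the second term is $\le Cr^{e_D}$ by the estimate above (with $x,y$ replaced by $-x,y$, noting $\lfloor -x\rfloor=\lfloor x\rfloor$), giving
\[d^\infty(\Phi_D x,\Phi_D y)\le C' d(x,y)+C''\big(d(1,x)+d(1,y)\big)^{e_D}.\]
The symmetric inequality, with $d$ and $d^\infty$ interchanged, is proved identically, which establishes that $\Phi_D$ is $O(r^{e_D})$-Lipschitz and $O(r^{e_D})$-expansive; surjectivity is immediate since $\Phi_D$ is a bijection. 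By the characterization of isomorphisms in the $o(v)$-category, $\Phi_D$ is an $O(r^{e_D})$-SBE, and taking the infimum over $D$ with $e_\g=\min_D e_D$ yields Theorem \ref{pansuq}.

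I expect the main obstacle to be the bookkeeping in the second paragraph: one must verify that the \emph{only} contributions to $x\ast y-x\ast^\infty y$ are the non-preserved components of brackets, that the degree-$\ell$ component satisfies $\|[z_1,\dots,z_n]_\ell\|\le C\prod_k\|z_k\|$ for the chosen norm, and that Corollary \ref{edcro} applies uniformly to every BCH term (including the nested brackets, where one has to track the grading degree of intermediate brackets correctly). The subtlety is that a crude application gives only the exponent $1-c^{-1}$; extracting the sharper $e_D$ requires precisely the grading information encoded in Corollary \ref{edcro}, namely that each non-preserved bracket jumps at least to degree $\lceil|\wp|/e_D\rceil$.
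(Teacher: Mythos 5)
Your proposal is correct and follows essentially the same route as the paper: you reprove the Goodman-type inequality of Lemma \ref{lgoodmane} by applying Corollary \ref{edcro} to the homogeneous components of each BCH term, then convert it into the two-sided metric estimate via Guivarch's estimates and subadditivity of the Guivarch norm, exactly as in the paper's proof of Theorem \ref{resbe}. The only cosmetic differences are your use of the sum (rather than max) form of the Guivarch norm and the omission of the harmless $\max(1,\cdot)$ needed when $\max(\lfloor x\rfloor,\lfloor y\rfloor)<1$, both of which are absorbed into constants.
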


Define a binary law on $\g$ by $x\ast y=\log(\exp(x)\exp(y))$: this is the group law, transported to $\g$ through the exponential map. If instead we perform this using the $\g$ endowed with the bracket $[\cdot,\cdot]^\infty$, we define another group law $\ast_\infty$ on $\g$ (depending on the original bracket and on $D$).

Fix a norm on each $\mathfrak{v}_i$ and define, for $x=\sum x_i\in\g$ (with $x_i\in\mathfrak{v}_i$), the Guivarch norm $\lfloor x\rfloor=\max\|x_i\|^{1/i}$. 
The main lemma underlying Theorem \ref{resbe} consists in proving the following:

\begin{lem}[Goodman-type inequality]\label{lgoodmane}
Under the assumptions of Theorem \ref{resbe}, there exists a positive constant $C$ such that for all $z_1,z_2$ in $\g$, we have
\[\lfloor(z_1\ast z_2)-(z_1\ast_\infty z_2)\rfloor\le
C\max(1,\lfloor z_1\rfloor^{e_D},\lfloor z_2\rfloor^{e_D}).\]
\end{lem}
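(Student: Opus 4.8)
The plan is to compare the Baker--Campbell--Hausdorff expansions of the two products $z_1\ast z_2$ and $z_1\ast_\infty z_2$ term by term, and then apply the Guivarch norm together with the Corollary \ref{edcro} bound on where iterated brackets land. The key structural fact is that the BCH formula expresses $z_1\ast z_2$ as a universal series in iterated brackets $[w_{i_1},\dots,w_{i_n}]$ of the arguments (with $w_{i}\in\{z_1,z_2\}$), and $z_1\ast_\infty z_2$ is the \emph{same} universal series but with each bracket replaced by its graded version $[\cdot,\cdot]^\infty$. Since the series is finite by nilpotency, the difference is a finite sum of terms $[w_{i_1},\dots,w_{i_n}]-[w_{i_1},\dots,w_{i_n}]^\infty$, each with a fixed rational coefficient. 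So the whole problem reduces to bounding, for each monomial, the Guivarch norm of a single such difference.

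Next I would exploit the definition of $[\cdot,\cdot]^\infty$. Decompose each argument along the linear grading: $z=\sum_k z_{(k)}$ with $z_{(k)}\in\mathfrak{v}_k$. For inputs of pure degree $\wp_1,\dots,\wp_n$, the difference $[\cdot,\dots,\cdot]-[\cdot,\dots,\cdot]^\infty$ is exactly the part of the ordinary iterated bracket lying in degrees strictly above $|\wp|=\sum_k\wp_k$, since $[\cdot,\cdot]^\infty$ is defined to keep only the lowest-degree (degree $|\wp|$) component of each bracket. By Corollary \ref{edcro}, that higher-degree part lies in $\g^{\lceil|\wp|/e_D\rceil}$, i.e.\ in degree at least $\lceil|\wp|/e_D\rceil$. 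Now I estimate with the Guivarch norm: a vector supported in degree $\ell$ and arising as a bracket of terms of degrees $\wp_1,\dots,\wp_n$ has norm controlled by $\prod_k\|z_{(\wp_k)}\|\le\prod_k\lfloor z\rfloor^{\wp_k}=\lfloor z\rfloor^{|\wp|}$ (after fixing submultiplicative norms on the layers), and since this vector sits in degree $\ell\ge|\wp|/e_D$, the Guivarch norm gives an exponent $1/\ell\le e_D/|\wp|$, whence $\lfloor\cdot\rfloor\le\bigl(\lfloor z\rfloor^{|\wp|}\bigr)^{1/\ell}\le\lfloor z\rfloor^{e_D}$, with $\lfloor z\rfloor=\max(\lfloor z_1\rfloor,\lfloor z_2\rfloor)$. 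Summing the finitely many monomials and absorbing coefficients into the constant $C$, and inserting the $\max(1,\cdot)$ to handle small arguments where these polynomial estimates degenerate, yields the claimed bound.

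The step I expect to be the genuine obstacle is the bookkeeping that converts the abstract grading bound of Corollary \ref{edcro} into the clean exponent $e_D$ uniformly across \emph{all} mixed-degree monomials appearing in BCH. Two subtleties arise. First, Corollary \ref{edcro} as stated controls brackets of inputs of \emph{pure} degree $\mathfrak{v}_{\wp_1},\dots,\mathfrak{v}_{\wp_n}$, whereas the BCH arguments $z_1,z_2$ are mixed; so I must first multilinearly expand each monomial into its pure-degree constituents and apply the bound to each, checking that the resulting exponents are all $\le e_D$ and that the number of such constituents is bounded independently of $z_1,z_2$ (it is, being controlled by dimension and nilpotency length). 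Second, I need the ceiling $\lceil|\wp|/e_D\rceil$ to be at least $|\wp|/e_D$, which is automatic, but I must confirm the inequality $1/\lceil|\wp|/e_D\rceil\le e_D/|\wp|$ survives the rounding; since $\lceil|\wp|/e_D\rceil\ge|\wp|/e_D$ this is immediate. The essential content is therefore entirely in Corollary \ref{edcro} and the multiplicativity of the Guivarch norm; the remainder is careful but routine accounting, which mirrors the explicit $3$-step and $4$-step computations in \S\ref{part3} and \S\ref{part4}.
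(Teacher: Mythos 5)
Your proposal is correct and follows essentially the same route as the paper's own proof: expand both group laws via the (finite, by nilpotency) BCH series, reduce to bounding each difference $[x_1,\dots,x_n]-[x_1,\dots,x_n]^\infty$ on pure-degree inputs via Corollary \ref{edcro}, and then combine the submultiplicativity of the layer norms with the Guivarch norm exponent $1/j\le e_D/|\wp|$ before summing the finitely many terms. The two subtleties you flag (multilinear expansion of mixed-degree arguments, and the harmless ceiling in $\lceil|\wp|/e_D\rceil$) are handled in the paper in exactly the way you describe.
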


Goodman \cite[Theorem 1]{Goo} established this lemma with $e_D$ replaced with $1-c^{-1}$, where $c$ is the nilpotency length. He just stated it with a constant $\gamma<1$ instead, but in his proof the definition of $\gamma$ makes it clear that it is $\le 1-c^{-1}$.

\begin{proof}[Proof of Lemma \ref{lgoodmane}]
Write $e=e_D$.  Note that $\lfloor\cdot\rfloor$ is subadditive.
For any $n$ and $\wp=(\wp_1,\dots,\wp_n)$, we have, for all $x_i\in\mathfrak{v}_{\wp_i}$, using the notation of \S\ref{inteli}, 
\[[x_1,\dots,x_n]-[x_1,\dots,x_n]^\infty=\sum_{j>|\wp|}[x_1,\dots,x_n]_j\]
By Corollary \ref{edcro}, this can be rewritten as
\[[x_1,\dots,x_n]-[x_1,\dots,x_n]^\infty=\sum_{j\ge|\wp|/e}[x_1,\dots,x_n]_j\]
(for the trivial case $e=0$, the latter sum is over $j\ge +\infty$, thus is an empty sum, thus is zero).

Then  for all $x_i\in\mathfrak{v}_{\wp_i}$,
\[\lfloor[x_1,\dots,x_n]-[x_1,\dots,x_n]^\infty\rfloor\le\max_{j\ge|\wp|/e}\lfloor[x_1,\dots,x_n]_j\rfloor=\max_{j\ge|\wp|/e}\|[x_1,\dots,x_n]_j\|^{1/j}.\]
There exists a constant $C_{(\wp,j)}$ (depending only on $\g$ and the choices of norms) such that $\|[x_1,\dots,x_n]_j\|\le C_{(\wp,j)}\prod_{i=1}^n\|x_i\|$ for all $x_i\in\wp_i$. Then, denoting $C=\max_{|\wp|<j\le c}C_{(\wp,j)}^{1/j}$ (where $c$ is the nilpotency length of $\g$), we have, for all $x_i\in\mathfrak{v}_{\wp_i}$
\[
\lfloor[x_1,\dots,x_n]-[x_1,\dots,x_n]^\infty\rfloor\le
C\left(\prod_{i=1}^n\|x_i\|\right)^{1/j}\]
\[=C\left(\prod_{i=1}^n\lfloor x_i\rfloor^{\wp_i}\right)^{1/j}
\le C\left(\max_{i=1}^n\lfloor x_i\rfloor\right)^{|\wp|/j}
\]
Thus for all $x_i\in\mathfrak{v}_{\wp_i}$, we have 
\begin{equation}\label{diffe}
\lfloor[x_1,\dots,x_n]-[x_1,\dots,x_n]^\infty\rfloor\le
C\max\left(1,\left(\max_{i=1}^n\lfloor x_i\rfloor\right)^{e}\right)
\end{equation}

Now write the Baker-Campbell-Hausdorff series as
\[B(z_1,z_2)=z_1+z_2+\sum_{n\ge 2}\sum_{q\in\{0,1\}^n}b_{n,q}[z_{q_1},\dots,z_{q_n}].\]

The Baker-Campbell-Hausdorff formula says, in particular, that in a nilpotent Lie algebra, $B(z_1,z_2)=\log(\exp(z_1)\exp(z_2))$.

Here we have to beware that we consider two Lie algebras brackets, and therefore also have two group structures. Identifying the simply connected Lie group to its Lie algebra through the exponential, we obtain the two group structures $\ast$ and $\ast_\infty$ given by the Baker-Campbell-Hausdorff series computed on the one hand with $[\cdot,\cdot]$, and on the other hand with $[\cdot,\cdot]^\infty$. Thus 
\[(z_1\ast z_2)-(z_1\ast_\infty z_2)=\sum_{n=2}^c\sum_{q\in\{0,1\}^n}b_{n,q}([z_{q_1},\dots,z_{q_n}]-[z_{q_1},\dots,z_{q_n}]^\infty).\]
We now need to write $z_k=\sum_{i=1}^c z_i^k$ in the decomposition $\g=\bigoplus\mathfrak{v}_i$. Then for any multilinear function $f$ on $n$ variables, we have 
\[f(z_{q_1},\dots,z_{q_n})=\sum_{p\in\{1,\dots,c\}^{\{1,\dots,n\}}}f(z^{q_1}_{p_1},\dots,z^{q_n}_{p_n})\]
Hence, writing $\{1,\dots,c\}^{\{1,\dots,n\}}=[c]^{[n]}$ we have
\[(z_1\ast z_2)-(z_1\ast_\infty z_2)=\sum_{n=2}^c\sum_{q\in\{0,1\}^n}
\sum_{p\in[c]^{[n]}}
b_{n,q}([z^{q_1}_{p_1},\dots,z^{q_n}_{p_n}]-[z^{q_1}_{p_1},\dots,z^{q_n}_{p_n}]^\infty),\]
whence
\[\lfloor(z_1\ast z_2)-(z_1\ast_\infty z_2)\rfloor\le \sum_{n=2}^c\sum_{q\in\{0,1\}^n}
\sum_{p\in[c]^{[n]}}
|b_{n,q}|\lfloor[z^{q_1}_{p_1},\dots,z^{q_n}_{p_n}]-[z^{q_1}_{p_1},\dots,z^{q_n}_{p_n}]^\infty\rfloor.\]
By (\ref{diffe}), we deduce 
\[\lfloor(z_1\ast z_2)-(z_1\ast_\infty z_2)\rfloor\le
C\sum_{n=2}^c\sum_{q\in\{0,1\}^n}\sum_{p\in[c]^{[n]}}
|b_{n,q}|\max(1,\max_{i=1}^n\lfloor z_{p_i}^{q_i}\rfloor^e).
\]
Then, denoting $C'=C\sum_{n=2}^c\sum_{q\in\{0,1\}^n}\sum_{p\in[c]^{[n]}}
|b_{n,q}|$, since $\lfloor z_q\rfloor=\max_i\lfloor z^q_i\rfloor$, we have
\[\lfloor(z_1\ast z_2)-(z_1\ast_\infty z_2)\rfloor\le
C'\max(1,\lfloor z_1\rfloor^e,\lfloor z_2\rfloor^e).\qedhere\]
\end{proof}

\begin{proof}[Proof of Theorem \ref{resbe}]
The inverse law of both group laws is given by $z^{-1}=-z$. Denote by $d$ and $d_\infty$ proper left-invariant geodesic distances on $G$ and $G_\infty$. We both view them as distances on $\g$. Then, by Guivarch's estimates \cite{Guiv}, there exists a constant $M\ge 1$ such that for all $z\in\g$, both $d(0,z)$ and $d_\infty(0,z)$ belong to $[M^{-1}\lfloor z\rfloor,M\lfloor z\rfloor]$. 
Then, writing $-z_1\ast z_2$ for $(-z_1)\ast z_2$ and using Lemma \ref{lgoodmane},
\begin{align*}
d(z_1,z_2)= &d(-z_1\ast z_2,0)\le M\lfloor -z_1\ast z_2\rfloor\\
\le & M\lfloor -z_1\ast_\infty z_2\rfloor+ M\lfloor (-z_1\ast z_2)-(-z_1\ast_\infty z_2)\rfloor\\
\le & M^2d_\infty(z_1,z_2)+MC\max(1,\lfloor z_1\rfloor^{e_D},\lfloor z_2\rfloor^{e_D}),
\end{align*}
and exactly the same reasoning works exchanging $d$ and $d_\infty$. Therefore the identity map $(\g,d)\to (\g,d_\infty)$ is $O(r^{e_D})$-Lipschitz as well as its inverse. Therefore it is an $O(r^{e_D})$-SBE.
\end{proof}

Let us elaborate on Question \ref{q_sbenil}. Every grading operator $D$ on $\g$ yields an SBE $\Phi_D:G\to G_\infty$, well-defined up to composition by an automorphism of $G_\infty$. The question can be split into two parts.

\begin{que}\label{sbeni_c}
Given an admissible function $r\mapsto f(r)\ge 1$,
\begin{enumerate}
\item\label{sbeni_c1} is it true that 
$r^e=O(f(r))$ if and only if there exists a grading operator $D$ such that $\Phi_D$ is an $O(f(r))$-SBE?
\item\label{sbeni_c2} is it true that 
$G$ is $O(f(r))$-SBE to $G_\infty$ if and only if there exists a grading operator $D$ such that $\Phi_D$ is an $O(f(r))$-SBE?
\end{enumerate}
\end{que}

Question \ref{sbeni_c}(\ref{sbeni_c1}) might have a negative answer because of some unexpected simplification in the computation involving the Baker-Hausdorff formula (computer assistance might help finding such a putative counterexample); this is why I have not conjectured a positive answer to Conjecture \ref{q_sbenil}. In case of such a negative answer, Question \ref{sbeni_c}(\ref{sbeni_c2}) sounds like a replacement, saying that the ``best" SBE between one simply connected Lie group and its Lie algebra should be found among the $\Phi_D$.

\begin{ex}\label{e_dehn}
Consider the 6-dimensional nilpotent real Lie algebra $\g$ defined as central product of a 4-dimensional and a 3-dimensional filiform Lie algebra. It can be described by its nonzero brackets: $[e_1,e_2]=e_5$, $[e_1,e_5]=[e_3,e_4]=e_6$. (This is $\g_{6,2}$ in \cite{Mag} and $L_{6,10}$ in \cite{graaf}.) This Lie algebra is not Carnot: the associated Carnot Lie algebra $\g_\infty$ has the same brackets except that $[e_3,e_4]=0$, and has 3-dimensional center while $\g$ has 1-dimensional center. Also, they have distinct Betti numbers: $b_2(\g')=7$ while $b_2(\g)=6$. This implies that they are not quasi-isometric, by Shalom's theorem \cite{sh04}. More precisely, under the Carnot grading, $\g'$ has 1-dimensional 2-homology in degree 4, which corresponds to a central extension of nilpotency length equal to 4, while $\g$ has no such central extension (every element of $\Lambda^2\g$ of the form $x\wedge z$, $(x,z)\in\g\times\g^3$ is a boundary). 

Let $G,G'$ be the corresponding simply connected Lie groups; we have $e_G=2/3$. For which $\alpha$ are $G$ and $G'$ $O(r^\alpha)$-SBE? The reason we insist on this example is that we expect that it might be possible to obtain nontrivial lower bounds on such $\alpha$ using the Dehn function, see Question \ref{dehn}. 
\end{ex}

\begin{que}\label{dehn}
Continue with the notation of Example \ref{e_dehn}. Is it true that $G$ has Dehn function in $o(r^4)$?
\end{que}

The existence of a 4-nilpotent central extension of $\g'$ implies, by standard arguments, that $G'$ has Dehn function $\simeq r^4$. Thus a positive answer to Question \ref{dehn} would provide the first example of a nilpotent group with Dehn function not equivalent to that of its associated Carnot group. Furthermore, a positive answer with a reasonably explicit upper bound and description of the homotopy might imply a positive lower bound on the set of $\alpha$ such that $G$ and $G'$ are SBE, which would be the first known results improving the bare fact that they are not quasi-isometric.

\section{Large-scale contractions and similarities}\label{LSCS}

\subsection{Large-scale contractions in the quasi-isometric setting}

In metric spaces, a $c$-Lipschitz self-map for $c<1$ has very simple dynamical properties. We extend this here to the large-scale setting, so as to obtain a reasonable definition of large-scale contractable metric space.

\begin{defn}\label{liplip}
Let $X,Y$ be metric spaces and $f:X\to Y$. We say that $f$ is $(c,C)$-Lipschitz' if $d(f(x),f(x'))\le\max(cd(x,x'),C)$ for all $x,x'\in X$. We say that $f$ is $c$-LS-Lipschitz' if it is $(c,C)$-Lipschitz' for some $C$.
\end{defn}

\begin{rem}\label{lipstar}
There is a more usual closely related variant: say that $f$ is $(c,C)$-Lipschitz if $d(f(x),f(x'))\le cd(x,x')+C$ for all $x,x'\in X$. Then if $f$ is $(c,C)$-Lipschitz', then it is $(c,C)$-Lipschitz. Conversely, if $f$ is $(c,C)$-Lipschitz, then it is $(c',C')$-Lipschitz' for every $c'>c$ and $C'\ge C/(1-c/c')$.

In particular, if $f$ is $c$-LS-Lipschitz' then it is $c$-LS-Lipschitz, and if $f$ is $c$-LS-Lipschitz, then it is $c'$-LS-Lipschitz' for all $c'>c$. As a consequence, if $g:X\to Y$ is at bounded distance of a $c$-LS-Lipschitz' map, then it is a $c'$-LS-Lipschitz' map for all $c'>c$ (indeed, $g$ is clearly $c$-LS-Lipschitz).
\end{rem}

\begin{lem}\label{composels}
Let $f:X\to Y$ be a $(c,C)$-Lipschitz' map and let $g:Y\to Z$ be a $(c',C')$-Lipschitz' map. Then $g\circ f$ is $(cc',\max(c'C,C'))$-Lipschitz'. In particular, 
\begin{itemize}
\item if $f$ is $c$-LS-Lipschitz' and $g$ is $c'$-LS-Lipschitz' then $g\circ f$ is $cc'$-LS-Lipschitz';
\item $f$ is $c$-LS-Lipschitz', $n\ge 0$ implies that $f^n$ is $c^n$-LS-Lipschitz';
\item $f$ is $(c,C)$-Lipschitz' with $c\le 1$, $n\ge 0$ implies that $f^n$ is $(c^n,C)$-Lipschitz'.
\end{itemize}
\end{lem}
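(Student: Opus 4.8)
The plan is to obtain the quantitative composition bound by directly unwinding the two Lipschitz' inequalities, and then to read off all three bulleted consequences from it (the last two by an induction on $n$).

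First I would prove the main assertion. Fix $x,x'\in X$. Applying the hypothesis on $g$ to the pair $(f(x),f(x'))$ gives
\[d(g(f(x)),g(f(x')))\le\max\bigl(c'\,d(f(x),f(x')),\,C'\bigr).\]
Next, applying the hypothesis on $f$ and using that $c'\ge 0$ (so that scaling by $c'$ commutes with $\max$, i.e. $c'\max(a,b)=\max(c'a,c'b)$), I would estimate
\[c'\,d(f(x),f(x'))\le c'\max\bigl(c\,d(x,x'),C\bigr)=\max\bigl(cc'\,d(x,x'),\,c'C\bigr).\]
Substituting and using associativity of $\max$ yields
\[d(g(f(x)),g(f(x')))\le\max\bigl(cc'\,d(x,x'),\,c'C,\,C'\bigr)=\max\bigl(cc'\,d(x,x'),\,\max(c'C,C')\bigr),\]
which is exactly the claim that $g\circ f$ is $(cc',\max(c'C,C'))$-Lipschitz'.

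The three particular cases then follow formally. For the first, if $f$ is $c$-LS-Lipschitz' it is $(c,C)$-Lipschitz' for some $C$, and likewise $g$ is $(c',C')$-Lipschitz' for some $C'$; the main assertion then makes $g\circ f$ a $(cc',\max(c'C,C'))$-Lipschitz' map, hence $cc'$-LS-Lipschitz'. For the second, I would induct on $n$: the case $n=0$ is the identity map, which is trivially $1$-LS-Lipschitz', and the inductive step writes $f^{n}=f\circ f^{n-1}$ and applies the first case to $f$ ($c$-LS-Lipschitz') and $f^{n-1}$ ($c^{n-1}$-LS-Lipschitz'), giving $c\cdot c^{n-1}=c^n$. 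For the third, I would again induct with $f^n=f\circ f^{n-1}$, now applying the quantitative main assertion with $f^{n-1}$ in the role of the inner $(c^{n-1},C)$-Lipschitz' map and $f$ in the role of the outer $(c,C)$-Lipschitz' map; this produces the constants $(c^{n}, \max(cC,C))$, and the hypothesis $c\le 1$ collapses $\max(cC,C)$ to $C$.

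There is no genuine obstacle here: the argument is a one-line manipulation of the $\max$-inequalities together with a routine induction. The only points demanding care are bookkeeping---ensuring $c'\ge 0$ when pulling the scalar inside the maximum, and invoking $c\le 1$ in the final bullet precisely to guarantee the additive constant does not grow under iteration.
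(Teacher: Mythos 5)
Your proof is correct and matches the paper's argument exactly: the same chain of $\max$-inequalities gives the quantitative bound $\max\bigl(cc'\,d(x,x'),\max(c'C,C')\bigr)$, and the paper simply declares the three bulleted consequences "immediate" where you spell out the (routine) inductions. Nothing to change.
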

\begin{proof}We have the inequality
\[d(g\circ f(x),g\circ f(x'))\le \max(c'd(f(x),f(x')),C')\]\[\le \max(c'\max(cd(x,x'),C),C')=\max(cc'd(x,x'),\max(c'C,C')).\]
All three consequences are immediate.
\end{proof}

\begin{defn}
We say that $X$ is LS-contractable if it admits a LS-contraction, i.e., a self-quasi-isometry that is $c$-LS-Lipschitz' for some $c<1$.
\end{defn}

Taking powers and using Lemma \ref{composels}, we see that this implies that $X$ admits self-quasi-isometries that are $(c,C)$-LS-Lipschitz' for $c$ arbitrary close to 0 and $C$ uniform. 

\begin{prop}\label{lsc_qi}
To be LS-contractable is a QI-invariant.
\end{prop}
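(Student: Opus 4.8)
The plan is to produce, out of the data of an LS-contraction on $X$ together with a quasi-isometry $\phi\colon X\to Y$ and a quasi-inverse $\psi\colon Y\to X$, an explicit LS-contraction on $Y$ by transport and iteration. Concretely, if $f\colon X\to X$ is a self-quasi-isometry that is $c$-LS-Lipschitz' with $c<1$, I would consider the map
\[
g=\phi\circ f^{\,n}\circ\psi\colon Y\to Y
\]
for a suitably large integer $n$, and show that for $n$ large it is a self-quasi-isometry of $Y$ that is $c'$-LS-Lipschitz' for some $c'<1$.

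First I would check that $g$ is a self-quasi-isometry of $Y$. This is immediate: $f$ is a quasi-isometry, hence so is each power $f^{\,n}$ (composition of quasi-isometries is a quasi-isometry), and $\phi,\psi$ are quasi-isometries; thus $g$, being a composition of quasi-isometries $Y\to X\to X\to Y$, is a quasi-isometry $Y\to Y$. Next I would track the multiplicative Lipschitz' constant. Since $\phi$ and $\psi$ are quasi-isometries, they are $(\lambda,K)$-Lipschitz and $(\mu,K')$-Lipschitz respectively in the usual sense for some $\lambda,\mu\ge 1$; by Remark \ref{lipstar} they are then $\lambda$-LS-Lipschitz' and $\mu$-LS-Lipschitz' (indeed $\lambda'$- and $\mu'$-LS-Lipschitz' for any $\lambda'>\lambda$, $\mu'>\mu$, but finite constants suffice here). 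By Lemma \ref{composels} the power $f^{\,n}$ is $c^{\,n}$-LS-Lipschitz', and applying Lemma \ref{composels} twice to the composition $\phi\circ f^{\,n}\circ\psi$ shows that $g$ is $(\lambda\,c^{\,n}\mu)$-LS-Lipschitz'.

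The one genuine point of the argument is then the choice of $n$: the multiplicative constants $\lambda,\mu$ of the quasi-isometries are $\ge 1$ and in general cannot be taken small, so a single copy of $f$ need not contract after conjugation. But since $0\le c<1$ we have $c^{\,n}\to 0$, so we may fix $n$ with $\lambda\,c^{\,n}\mu<1$; setting $c'=\lambda\,c^{\,n}\mu<1$ makes $g$ a $c'$-LS-Lipschitz' self-quasi-isometry of $Y$, whence $Y$ is LS-contractable. This is the ``main obstacle'', though it is a mild one: the role of taking a high power $f^{\,n}$ is precisely to create enough contraction to absorb the product $\lambda\mu$ of the distortion constants introduced by conjugating through $\phi$ and $\psi$. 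Finally I would note the argument is symmetric in $X$ and $Y$ (swapping $\phi$ and $\psi$), so being LS-contractable is indeed a quasi-isometry invariant.
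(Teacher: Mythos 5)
Your proof is correct and follows essentially the same route as the paper: the paper first notes (just before the proposition) that taking powers via Lemma \ref{composels} yields self-quasi-isometries of $X$ with arbitrarily small multiplicative constant, and then conjugates such a map by the quasi-isometries to get a contraction of $Y$, exactly as you do with $\phi\circ f^{\,n}\circ\psi$. The only cosmetic difference is that you fold the power-taking into the conjugation step rather than invoking it as a separate preliminary remark.
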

\begin{proof}
Let $X$ be a LS-contractable metric space. Let $Y$ be  metric space, and let $u:X\to Y$ and $v:Y\to X$ be quasi-isometries, say both $\lambda$-LS-Lipschitz' for some $\lambda>0$. Consider a $c$-LS-Lipschitz' self-quasi-isometry $f$ of $X$ for $c<\lambda^{-2}$. Then $u\circ f\circ v$ is a $c\lambda^2$-LS-Lipschitz' self-quasi-isometry of $Y$, and $c\lambda^2<1$. 
\end{proof}

\begin{prop}\label{lsc_po}
Let $X$ be a nonempty connected graph of bounded degree. Suppose that $X$ is LS-contractable. Then $X$ has polynomial growth.
% ne pas effacer      large scale doubling??
\end{prop}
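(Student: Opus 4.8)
The plan is to convert the contraction hypothesis into a \emph{reverse-doubling} inequality for the ball-volume function and then iterate it down to bounded radius. Fix a base-point $o$ and write $V(R)$ for the number of vertices in the closed ball $B(o,R)$; since $X$ is a connected graph of bounded degree it is uniformly locally finite, so for every $r$ the quantity $V^{\mathrm u}(r)$ (the supremum of cardinalities of $r$-balls) is finite. Let $f$ be a self-quasi-isometry of $X$ that is $(c,C)$-Lipschitz' with $c<1$ (this exists by the definition of LS-contractable). I would extract two features of $f$. First, from being $(c,C)$-Lipschitz', centering at $y_0=f(o)$: for $R\ge C/c$ and any $x\in B(o,R)$ one has $d(f(x),y_0)\le\max(cR,C)=cR$, so $f(B(o,R))\subseteq B(y_0,cR)\subseteq B(o,cR+\rho_0)$ where $\rho_0=|y_0|$. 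Second, since $f$ is a quasi-isometry it is expansive, $d(f(x),f(x'))\ge a\,d(x,x')-b$ for constants $a>0,\ b\ge 0$; hence any fibre $f^{-1}(y)$ has diameter $\le b/a$ and therefore cardinality at most $m:=V^{\mathrm u}(b/a)<\infty$. This is exactly where bounded degree is used.

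Combining the two gives the central estimate. As $f$ is at most $m$-to-one, $|f(B(o,R))|\ge V(R)/m$, and together with the inclusion above this yields, for all $R\ge C/c$,
\[V(R)\le m\,V(cR+\rho_0).\]
I would then iterate. Setting $g(R)=cR+\rho_0$ and enlarging the threshold to $R_1:=\max(C/c,\ \rho_0/(1-c)+1)$, so that $g$ contracts strictly towards its fixed point $A=\rho_0/(1-c)<R_1$, one applies the estimate repeatedly while the radius stays $\ge R_1$. Since $g^{(k)}(R)=c^{k}(R-A)+A$, the number $k$ of steps needed to bring the radius below $R_1$ satisfies $k\le \log R/\log(c^{-1})+O(1)$, and monotonicity of $V$ gives
\[V(R)\le m^{k}V(R_1)\le \big(m^{O(1)}V^{\mathrm u}(R_1)\big)\,R^{\log m/\log(c^{-1})}.\]
This is the desired polynomial bound, with exponent $\log m/\log(c^{-1})$, so $X$ has polynomial growth. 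Note that coarse surjectivity of $f$ is never used.

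I do not expect a genuine obstacle here: the argument is short once the reverse-doubling inequality is isolated. The two points that require care are purely bookkeeping. The first is verifying that $f$ is uniformly finite-to-one, which is precisely where bounded degree (via uniform local finiteness of the bounded-diameter fibres) enters. The second is handling the additive shift $\rho_0$ and choosing the threshold $R_1$ so that each application of $V(R)\le m\,V(g(R))$ is legitimate — i.e. so that all intermediate radii $g^{(j)}(R)$ stay above $C/c$ until the last step — and then converting the iteration count into the polynomial exponent. Neither step should present real difficulty.
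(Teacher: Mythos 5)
Your proof is correct, but it runs along a genuinely different line from the paper's. You convert the contraction into a reverse-doubling inequality for the volume function: pushing the ball $B(o,R)$ forward under $f$ lands it in $B(o,cR+\rho_0)$, and uniform finiteness of the fibres (expansivity plus bounded degree, exactly as in the paper) gives $V(R)\le m\,V(cR+\rho_0)$, which you then iterate down to a fixed threshold. The paper instead works with displacement sets $Z_n=\{x: d(x,f(x))\le c^{-n}C\}$: it shows $f(Z_n)\subset Z_{n-1}$, so $\#Z_n\le k^n\#Z_0$ by the same fibre bound, proves separately that $Z_0$ is bounded (hence finite), and then checks that any point far from $Z_0$ has displacement controlled linearly by its distance to $Z_0$, so that large annuli around $Z_0$ are contained in suitable $Z_n$. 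Both arguments rest on the same two ingredients and produce the same shape of exponent, $\log_{c^{-1}}$ of the fibre-cardinality bound; what your route buys is economy — you never need the paper's auxiliary step showing that the approximate fixed-point set $Z_0$ is bounded, and the iteration of $V(R)\le m\,V(g(R))$ with $g(R)=cR+\rho_0$ is standard bookkeeping once the threshold $R_1$ is chosen above both $C/c$ and the fixed point $\rho_0/(1-c)$ — while the paper's route buys a dynamical picture (all points are contracted toward a finite core, with displacement quantified by distance to that core). Like the paper's proof, yours uses neither coarse surjectivity nor anything beyond uniform local finiteness, so it too extends verbatim to uniformly discrete spaces as in the remark following the proposition.
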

\begin{proof}Let $f$ be a $(c,C)$-Lipschitz' self-quasi-isometry of $X$ (we can suppose that $C>0$); suppose that $X$ has degree $\le\delta$. It follows that the fibers of $f$ have cardinal at most $k$, for some integer $k$.

For $n\ge 0$, define $Z_n=\{x\in X:d(x,f(x))\le c^{-n}C\}$. We see that $Z_0\subset Z_1\subset\dots$, and, for every $n\ge 1$, $f(Z_n)\subset Z_{n-1}$, i.e., $Z_n\subset f^{-1}(Z_{n-1})$. This implies that $\#(Z_n)\le k^n\#(Z_0)$ for all $n$.

Suppose $z,z'\in Z_0$ with $d(z,z')\ge c^{-1}C$. Then 
\[d(f(z),f(z'))\le cd(z,z')\le cd(z,f(z))+cd(f(z),f(z'))+cd(z',f(z'))\]
\[\le 2cC+cd(f(z),f(z')),\]
whence 
\[d(f(z),f(z'))\le \frac{2cC}{1-c};\]
there exists $c''$ such that for all $x,x'\in X$, $d(f(x),f(x'))\le \frac{2cC}{1-c}$ implies $d(x,x')\le c''$. Hence the diameter of $Z$ is $\le\max(c^{-1}C,c'')$; in particular $Z_0$ is bounded, hence finite.

Fix $x_0\in Z_0$. If $d(x,x_0)\ge c^{-1}C$, then
\[d(x,f(x))\le d(x,x_0)+d(x_0,f(x_0))+d(f(x),f(x_0))\]
\[\le (1+2c)d(x,x_0);\]
hence, denoting by $B(r)$ the closed $r$-ball around $x_0$, we have
\[B((1+2c)c^{-n}C)\smallsetminus B(c^{-1}C)\subset Z_n;\]
hence 
\[\#B((1+2c)c^{-n}C)\le \#B(c^{-1}C)+k^n\#(Z_0)\]
for all $n\ge 0$, and hence $X$ has polynomial growth (of degree $\le\log_{c^{-1}}(k)$).
\end{proof}

\begin{rem}
The proof works more generally in any uniformly discrete metric space, which means a metric space such that balls of radius $r$ have cardinal $\le u(r)$ for some function $u$ (see \cite[\S 3.D]{CH}).
\end{rem}

\begin{cor}
If a compactly generated, locally compact group $G$ is LS-contractable, then it has polynomial volume growth.
\end{cor}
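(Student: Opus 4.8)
The plan is to reduce to the graph case already treated in Proposition \ref{lsc_po}, exploiting that all the relevant properties are quasi-isometry invariants. First I would dispose of the trivial situation: if $G$ is compact it has bounded, hence polynomial, growth, so we may assume $G$ is noncompact; in particular any bounded-degree connected graph quasi-isometric to it is nonempty.

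The first substantive step is to replace $G$ by a discrete model. Every compactly generated locally compact group is quasi-isometric to a nonempty connected graph $X$ of bounded degree (for instance a Cayley--Abels graph, or the Cayley graph with respect to a compact symmetric generating subset; see \cite[Chap.~2--4]{CH}). Moreover, under such a quasi-isometry the volume growth of $G$, measured by the Haar volume of balls, and the counting growth of $X$ are equivalent, so that $G$ has polynomial growth if and only if $X$ does \cite[\S 3.D]{CH}. This is the step where the locally compact machinery enters, and I expect it to be the only genuine obstacle; everything else is formal. (It is precisely this passage to a uniformly discrete model that Proposition \ref{lsc_po}, and the remark following it, require, since $G$ itself is not uniformly discrete when it is non-discrete.)

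The second step is to transport the hypothesis across the quasi-isometry. Since being LS-contractable is a quasi-isometry invariant by Proposition \ref{lsc_qi}, the graph $X$, being quasi-isometric to $G$, is itself LS-contractable.

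Finally, $X$ is a nonempty connected graph of bounded degree that is LS-contractable, so Proposition \ref{lsc_po} applies directly and yields that $X$ has polynomial growth (indeed of degree at most $\log_{c^{-1}}(k)$ in the notation there). By the growth comparison recorded in the first step, $G$ has polynomial volume growth, as desired.
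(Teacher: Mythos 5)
Your proof is correct and follows essentially the same route as the paper: pass to a connected bounded-degree graph quasi-isometric to $G$, transfer LS-contractability via Proposition \ref{lsc_qi}, apply Proposition \ref{lsc_po}, and conclude by quasi-isometry invariance of growth. The extra care you take (disposing of the compact case, citing the growth comparison in \cite[\S 3.D]{CH}) just makes explicit what the paper leaves implicit.
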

\begin{proof}
Indeed $G$ is quasi-isometric to some connected, bounded degree graph (see \cite[\S 3.B]{CH} if necessary), which has to be LS-contractable, hence of polynomial growth.
\end{proof}

Therefore the characterization of compactly generated locally compact groups that are LS-contractable reduces to the case of simply connected nilpotent Lie groups. If such a group admits a contracting automorphism, then it is coarsely contractable. 

\begin{que}\label{lsc_q}
Conversely, if a simply connected nilpotent Lie group is LS-contractable, does it admit a contracting automorphism?
\end{que}

We rather expect a positive answer, although we do not know a single example of a simply connected nilpotent Lie group that is not LS-contractable (while many simply connected nilpotent Lie groups in dimension 7 and higher, have no contracting automorphisms). A simply connected nilpotent Lie group has a contracting automorphism if and only if its algebra has one, which is equivalent to having a grading in positive integers (not necessarily Carnot).

\subsection{Sublinearly contractable}\label{s_sc}

We can define, more generally, a $O(u)$-contraction to be a $(c,O(u))$-Lipschitz' and $O(u)$-SBE map for some $c<1$, and an $o(u)$-contraction to be an $O(v)$-contraction for some $v=o(u)$.

We say that a space is $O(u)$-contractable, respectively $o(u)$-contractable, if it admits an $O(u)$-contraction, resp.\ $o(u)$-contraction. Denoting as we do here the variable by $r$, we write ``sublinearly contractable" for $o(r)$-contractable. Arguing as in the large-scale case (that is, $O(1)$-contractibility), this is a $O(u)$-SBE (resp.\ $o(u)$-SBE) invariant.

Beware that, in constast to Proposition \ref{lsc_po}, a sublinearly contractable bounded degree graph need not be of polynomial growth: indeed, the trees of superpolynomial growth of Example \ref{counter_tre}(\ref{polgr_ni}) are SBE to a geodesic ray, and hence sublinearly contractable.

In particular, every compactly generated locally compact group with polynomial group being SBE to a Carnot group, it is sublinearly contractable.
More precisely, every simply connected nilpotent Lie group $G$ with $e=e_G$ is $O(r^e)$-contractable. But this is not always optimal. Indeed, it can happen that $G$ is contractable (i.e., admits a positive grading) and hence is $O(1)$-contractable (although not always being quasi-isometric to a Carnot group). This applies to the 5-dimensional 3-step nilpotent non-Carnot case.

It can also happen that $G$ is not contractable, but is known to be $O(r^\alpha)$-contractable for some $\alpha<e_G$, as is given by the following proposition.

\begin{prop}\label{708}
The following characteristically nilpotent Lie algebra $\g$, denoted $\g_{7,0,8}$ in \cite{Mag}, with basis $(e_i)_{1\le i\le 7}$ and nonzero brackets (writing $ij\vert k$ for $[e_i,e_j]=e_k$)
\[12\vert 4,\;14\vert 5,\;15\vert 6,\;26\vert 7,\;54\vert 7,\quad 13\vert 7,\;23\vert 6,\;24\vert 6,\]
has $e_\g=3/4$, but the corresponding simply connected (7-dimensional real) Lie group is $O(r^{2/5})$-contractable. More precisely, it is $O(r^{2/5})$-SBE to a contractable Lie group (i.e., whose Lie algebra admits a positive grading).
\end{prop}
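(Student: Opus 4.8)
The plan has three parts: fix the reference grading via the lower central series, compute $e_\g=3/4$, and then produce a positively graded Lie algebra that is $O(r^{2/5})$-close to $\g$. First I would compute the lower central series, obtaining $\g^2=\langle e_4,e_5,e_6,e_7\rangle$, $\g^3=\langle e_5,e_6,e_7\rangle$, $\g^4=\langle e_6,e_7\rangle$, $\g^5=\langle e_7\rangle$ and $\g^6=0$, so $\g$ is $5$-step nilpotent with $\tau=31111$, and the standard compatible grading $D_0$ assigns degrees $(1,1,1,2,3,4,5)$ to $(e_1,\dots,e_7)$. Relative to $D_0$ the only non-homogeneous brackets are the three defects $[e_1,e_3]=e_7$ (type $(1,1)\to 5$, ratio $2/5$), $[e_2,e_3]=e_6$ (type $(1,1)\to 4$, ratio $1/2$), and $[e_2,e_4]=e_6$ (type $(1,2)\to 4$, ratio $3/4$); crucially $e_7$ is central, since it never occurs as an input to a bracket.

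\emph{The value $e_\g=3/4$.} The worst defect of $D_0$ has ratio $3/4$, so $D_0$ is an $[r]$-derivation exactly for $r\ge 3/4$, giving $e_\g\le e_{D_0}=3/4$. For the reverse inequality I would show that $\g$ is not $\{(1,2\vert 4)\}$-derivable, i.e.\ that no compatible linear grading $(\mathfrak v_i)$ satisfies $[\mathfrak v_1,\mathfrak v_2]_4=0$. Writing a general such grading by basis vectors $f_i=e_i+(\textrm{terms in higher } \g^k)$, a direct expansion of $[f_2,f_4]$ shows that its $e_6$-coefficient equals $1$ and cannot be cancelled, because every other contribution to the bracket lands in $\langle e_7\rangle=\g^5$. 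Hence $[\mathfrak v_1,\mathfrak v_2]_4\neq 0$ for every compatible grading, so by Proposition \ref{caractdji} every grading operator fails $(1,2\vert 4)$, whence $e_D\ge 3/4$ for all $D$ and $e_\g=3/4$.

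\emph{A positively graded model.} I would then define $\g'$ to be $\g$ with the single structure constant $[e_1,e_3]=e_7$ replaced by $0$, all others unchanged. Since $e_7$ is central, deleting this one bracket does not affect any Jacobi identity: the only triples involved are $\{e_1,e_3,x\}$, whose term $[[e_1,e_3],x]$ equals $0$ in both algebras (it is $[e_7,x]=0$ in $\g$ and $[0,x]=0$ in $\g'$), the remaining two terms being unchanged. Thus $\g'$ is a Lie algebra, and the surviving relations are all homogeneous for the positive weights $w=(1,2,3,3,4,5,7)$; hence $D_w=\mathrm{diag}(w)$ is a semisimple derivation with positive spectrum, the group $G'$ admits a contracting automorphism, and so $G'$ is LS-contractable. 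One checks moreover that $\g'$ has the \emph{same} lower central series as $\g$ (the generator $e_7$ of $\g^5$ is still produced by $[e_2,e_6]$ and $[e_5,e_4]$), so the standard Guivarch norm $\lfloor\cdot\rfloor$ controls the word metrics of both $G$ and $G'$ via Guivarch's estimates.

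\emph{The SBE estimate and the obstacle.} Finally I would rerun the Goodman/BCH computation of Lemma \ref{lgoodmane} with $[\cdot,\cdot]^\infty$ replaced by $[\cdot,\cdot]'$. The bilinear difference is $[x,y]-[x,y]'=\beta(x,y)$, where $\beta(x,y)=(x^{(1)}y^{(3)}-x^{(3)}y^{(1)})e_7$ records the deleted bracket. The decisive point, which I expect to be the main obstacle to make rigorous, is that in every iterated BCH bracket of length $n\ge 3$ the difference vanishes: applying $\beta$ anywhere but the outermost slot produces the central $e_7$, killed by the next bracketing, while at the outermost slot one argument is an inner bracket lying in $\g^2$, which has no $e_1,e_3$-component, so $\beta$ again vanishes. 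Hence only the degree-$2$ BCH term survives and $(z_1\ast z_2)-(z_1\ast' z_2)=\tfrac12\beta(z_1,z_2)$. Since $e_1,e_3$ have degree $1$ and $e_7$ degree $5$, this gives $\lfloor(z_1\ast z_2)-(z_1\ast' z_2)\rfloor\le C\max(\lfloor z_1\rfloor,\lfloor z_2\rfloor)^{2/5}$, and the argument of Theorem \ref{resbe} shows the identity $G\to G'$ is an $O(r^{2/5})$-SBE. By $O(r^{2/5})$-SBE-invariance of $O(r^{2/5})$-contractability (\S\ref{s_sc}), $G$ is itself $O(r^{2/5})$-contractable. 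This is exactly what lowers the exponent below the value $3/4$ forced on the full Carnot map, the centrality of $e_7$ and the degree-$1$ placement of $e_1,e_3$ being what makes it work.
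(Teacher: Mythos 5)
Your third and fourth steps --- deleting the single bracket $[e_1,e_3]=e_7$, checking that the Jacobi identity survives because $e_7$ is central, exhibiting the positive grading, noting that $\g$ and $\g'$ have the same lower central series so that one Guivarch norm controls both word metrics, and killing every BCH term of length $\ge 3$ because $\beta=B'-B$ has central image and vanishes on $\g\times\g^2$ --- are correct, and they constitute essentially the paper's own proof of the $O(r^{2/5})$ assertion (the paper takes $\g'=\g_{7,1,21}$ from Magnin's list rather than verifying Jacobi by hand). Your weight vector $(1,2,3,3,4,5,7)$ is moreover the right one: the paper's printed grading, which puts $e_7$ in degree $6$, is a misprint, since $[e_2,e_6]=e_7$ and $[e_5,e_4]=e_7$ force $w_7=w_2+w_6=w_4+w_5=7$.

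The computation of $e_\g$, by contrast, has a genuine gap. You assert that since the worst defect of $D_0$ among the structural (bilinear) brackets has ratio $3/4$, one gets $e_{D_0}=3/4$. But by Definition \ref{defe}, $e_{D_0}$ is governed by the conditions $D_0\in\mathcal{D}^j_\wp(\g)$ for \emph{all} tuples $\wp$, i.e.\ by the operators $\Delta_n$ for every $n\ge 2$, not only $n=2$; and a bilinear defect can propagate through further bracketing. That happens here: $[e_2,[e_2,e_4]]=[e_2,e_6]=e_7$ is a left-normed triple bracket with input weights $(1,1,2)$ and output in degree $5$, so
\[\Delta_3D_0(e_2,e_2,e_4)=5e_7-(1+1+2)e_7=e_7\neq 0,\]
hence $D_0\notin\mathcal{D}^5_{(1,1,2)}(\g)$. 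Since $|\wp|/j=4/5>3/4$ for this pair, in fact $e_{D_0}=4/5$, not $3/4$. Worse, the gap cannot be repaired by a better grading: your own lower-bound computation shows $[f_2,f_4]\equiv e_6\pmod{\g^5}$ for every compatible grading, and then $[f_2,[f_2,f_4]]=[e_2+a_2,\,e_6+\mu e_7]=e_7$ exactly, so every grading operator fails $(1,1,2|5)$, and Proposition \ref{caractdji} gives $e_\g=4/5$ for the algebra as printed. (This is equally a flaw in the paper's own proof: its claim that ``quadruple brackets respect the degree'' fails on $[e_2,[e_2,[e_1,e_2]]]=e_7$, so the value $3/4$ in the proposition is an erratum --- unless the printed bracket table misquotes Magnin's $\g_{7,0,8}$.) Your lower bound $e_\g\ge 3/4$ via non-$\{(1,2|4)\}$-derivability is correct, and is a legitimate alternative to the paper's route through $\g/\g^5\cong\g_{6,12}$ and the monotonicity $e_{\g/\g^j}\le e_\g$ of \S\ref{misce}; it is simply not sharp. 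None of this touches the $O(r^{2/5})$ half, which compares $\g$ with $\g'$ rather than with its Carnot algebra, and the point of the proposition --- contractability with exponent strictly below $e_\g$ --- survives, with an even larger gap ($2/5$ versus $4/5$).
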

\begin{proof}
The nilpotency length of $\g$ is 5, and the quotient $\g/\g^5$ is isomorphic to $\g_{6,12}$, which has $e=3/4$ (see \S\ref{misce}). In particular $e_\g\ge 3/4$. Let $D$ be the grading operator corresponding to the grading for which $e_1,e_2,e_3$ have degree 1 and $e_i$ has degree $i-2$ for $i=4,5,6,7$. Then among the brackets above, only the last three fail to respect the degree. It readily follows that quadruple brackets respect the degree. In particular, $D\in\mathcal{D}_\wp^6(\g)$ for all $\wp$ such that $|\wp|=4$. Since for $2\le i<j\le 5$ and $i\le 3$ we have $i/j\le 3/4$, we deduce $e_\g\le 3/4$.

Again using the nomenclature in \cite{Mag}, the Lie algebra $\mathfrak{h}=\g_{7,1,21}$ is defined exactly with the same brackets, except that $[e_1,e_3]=0$. This Lie algebra admits a positive Lie algebra grading (for which $e_1$ has degree 1, $e_2$ has degree 2, $e_3,e_4$ have degree 3, $e_5$ has degree 4, $e_6$ has degree 5, and $e_7$ has degree 6). So, identifying the Lie algebras with their exponentials, it is enough to show that the identity map between the two group laws is an $O(r^{2/5})$-SBE.

Denote by $B'$ the bracket in $\g$ and $B$ the bracket in $\mathfrak{h}$. Then the alternating bilinear form $B'-B$ maps $e_1\wedge e_3$ to $e_7$ and other basis elements to 0. As for brackets, write iterated brackets as $B(x,B(y,z))=B(x,y,z)$, etc. Since its image of $B'-B$ is central (for both brackets), any iterated bracket $B'(x_1,\dots,x_n)$ is equal to $B'(x_1,B(x_2,\dots,x_n))$. Since moreover $B'-B$ vanishes on $\g\times\g^2$, we deduce that for $n\ge 3$, $B'(x_1,\dots,x_n)=B(x_1,\dots,x_n)$ for all $n$. Therefore, denoting by $\ast$ and $\ast'$ the corresponding laws given by the Baker-Hausdorff formula, only terms of degree 2 remain when computing:
\[(x\ast' y)-(x\ast y)=\frac12(B'(x,y)-B(x,y)),\quad\forall x,y\in\g.\]
Decomposing $x$ and $y$ along the basis $(e_i)$, we get 
\[(x\ast' y)-(x\ast y)=\frac12(x_1y_3-x_3y_1)e_7.\]
Since $\lfloor x\rfloor$ is equal to $\max(|x_1|,|x_2|,|x_3|,|x_4|^{1/2},|x_5|^{1/3},|x_6|^{1/4},|x_7|^{1/5})$ (after some choice of submultiplicative norm),
this implies an equality of the form $\lfloor(x\ast' y)-(x\ast y)\rfloor\le C\max(\lfloor x\rfloor,\lfloor y\rfloor)^{2/5}$. 
We can then end the proof exactly as in the proof of Theorem \ref{resbe}.
\end{proof}

\subsection{Large-scale similarities}\label{s_lss}

A similarity of a metric space is a self-map that pulls back the distance to a multiple of itself. We define here the large-scale analogue of this notion. Write $\log^+(x)=\max(0,\log x)$.

\begin{defn}
Let $f$ be a $(c,C)$-Lipschitz' self-map of $X$ with $c<1$ (in the sense of Definition \ref{liplip}). For $t\ge C$, define $\Lambda_t=\Lambda_t^{f}$ by 
\[\Lambda_t(x,x')=\min\{n\in\N:d(f^n(x),f^n(x'))\le t\}.\] Note that $\Lambda_t-\Lambda_C$ is bounded on $X\times X$ (if $t\le c^{-n}C$ for some $n\ge 0$, then it is bounded by $n$).

Given $b\in\mathopen]0,1\mathclose[$, we say that $f$ is a $b$-LS-similarity if $\Lambda_t-\log_{b^{-1}}^+d$ is bounded on $X\times X$ for some/any $t\ge C$.
\end{defn}

\begin{ex}~
\begin{enumerate}
\item Let $\R$ be endowed with the usual metric and $f(x)=bx$, with $0<b<1$. Then for any $t>0$ and $x,x'\in\R$ we have $\Lambda_t(x,x')=\lceil\log_{b^{-1}}^+(|x-x'|/t)\rceil$. In particular $\Lambda_t^f(x,x')=\log_{b^{-1}}^+(|x-x'|)+O(1)$ (in the sense that the difference is bounded is, for given $t>0$, a bounded function of $(x,x')$).
\item Let $\R^2$ be endowed with the sup norm, and $g(x,y)=(x/2,y/3)$. Then \[\Lambda^g_t((x,y),(x',y'))=\lceil\max(\log_2^+(|x-x'|/t),\log_3^+(|y-y'|/t))\rceil.\]
In particular, 
\[\Lambda_t^g((x,y),(x',y'))=\max(\log_2^+(|x-x'|),\log_3^+(|y-y'|))+O(1).\]
\item Let $\R^2$ be endowed with any norm, fix $0<b<1$ and $h(x,y)=b^{-1}(x+y,y)$. Then for any $t>0$, \[\Lambda_t^h((x,0),(x',0))=\log_{b^{-1}}^+(|x-x'|)+O(1),\] and \[\Lambda_t^h((0,y),(0,y'))=\log_{b^{-1}}^+(|y-y'|)+\log_{b^{-1}}^+(\log_{b^{-1}}^+(|y-y'|))+O(1).\]
\end{enumerate}
\end{ex}

\begin{rem}
If $X$ is unbounded, then \[\log^+_{b^{-1}}d-\log^+_{{b'}^{-1}}d=\left(\frac{1}{\log(b^{-1})}-\frac1{\log(b'^{-1})}\right)\log^+d\] is unbounded as soon as $b\neq b'$, and it follows that $f$ is a $b$-LS-similarity for at most one value of $b$. (If $X$ is bounded, then every map $f:X\to X$ is obviously a $b$-LS-similarity for every $b$.)
\end{rem}

\begin{lem}
Let $f$ be a $c$-LS-Lipschitz' self-map of $X$ with $c<1$. Let $w:X\to X$ have bounded distance to the identity map of $X$. Then for large $t$ we have $|\Lambda_t^f-\Lambda_t^{f\circ w}|$ bounded. In particular, if $f$ is a $b$-LS-similarity for $b<1$ then $f\circ w$ is a $b$-LS-similarity.
\end{lem}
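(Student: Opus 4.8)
The plan is to reduce everything to a uniform shadowing estimate between the iterates of $f$ and those of $g:=f\circ w$, and then to translate distances between iterates into the comparison of the functions $\Lambda_t$. First I would record two preliminary facts about $g$. Since $w$ is at distance $\le K$ from the identity for some $K$, the map $g$ is at bounded \emph{sup}-distance from $f$: for every $x$, $d(g(x),f(x))=d(f(w(x)),f(x))\le\max(cd(w(x),x),C)\le\max(cK,C)=:L$. Moreover $g$ is itself $c'$-LS-Lipschitz' for any $c'\in(c,1)$ (combine $d(w(x),w(x'))\le d(x,x')+2K$ with Remark \ref{lipstar}), so $\Lambda_t^g$ is defined for all large $t$ and enjoys the same ``$\Lambda_t-\Lambda_{t'}$ bounded'' property recorded after the definition.

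The heart of the matter --- and the step I expect to be the main obstacle --- is the claim that $a_n:=\sup_{x}d(f^n(x),g^n(x))$ is bounded independently of $n$. Here the contraction $c<1$ is essential. Writing $d(f^n(x),g^n(x))\le d(f(f^{n-1}x),f(g^{n-1}x))+d(f(g^{n-1}x),g(g^{n-1}x))$, the first term is $\le\max(c\,d(f^{n-1}x,g^{n-1}x),C)$ and the second is $\le L$ by the sup-distance bound above applied at the point $g^{n-1}x$. This yields the recursion $a_n\le ca_{n-1}+C+L$ with $a_0=0$, hence $a_n\le(C+L)/(1-c)=:M$ for all $n$. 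Thus $f^n$ and $g^n$ stay within the fixed distance $M$ uniformly in $n$ and in the base-point.

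Finally I would convert this into the statement. Fix $t$ large enough that both $\Lambda_t^f$ and $\Lambda_t^g$ are defined. If $n=\Lambda_t^g(x,x')$, then $d(g^nx,g^nx')\le t$, and the triangle inequality together with $a_n\le M$ gives $d(f^nx,f^nx')\le t+2M$, so $\Lambda_{t+2M}^f(x,x')\le\Lambda_t^g(x,x')$; symmetrically $\Lambda_{t+2M}^g\le\Lambda_t^f$. Since $t+2M\ge t$ lies above the additive Lipschitz' constants of both $f$ and $g$, the remark that $\Lambda_t-\Lambda_{t'}$ is bounded in that range shows that $\Lambda_t^f-\Lambda_{t+2M}^f$ and $\Lambda_t^g-\Lambda_{t+2M}^g$ are bounded functions of $(x,x')$. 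Chaining the four inequalities gives $|\Lambda_t^f-\Lambda_t^g|$ bounded on $X\times X$. The ``in particular'' then follows immediately: if $\Lambda_t^f-\log^+_{b^{-1}}d$ is bounded, then so is $\Lambda_t^{f\circ w}-\log^+_{b^{-1}}d$, i.e.\ $f\circ w$ is a $b$-LS-similarity.
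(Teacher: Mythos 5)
Your proof is correct and follows essentially the same route as the paper's: a uniform shadowing bound $\sup_{n,x} d\big(f^n(x),(f\circ w)^n(x)\big)<\infty$ obtained from a geometric-series recursion exploiting $c<1$, followed by the conversion $\Lambda^{f\circ w}_{t+2M}\le \Lambda^f_t$ (and symmetrically) together with the remark that $\Lambda_t-\Lambda_{t'}$ is bounded for large $t,t'$. The only cosmetic difference is that you run the recursion against the sup-distance $L$ between $f$ and $f\circ w$ and solve it, whereas the paper inducts directly with the displacement $k$ of $w$ to get the explicit bound $(c+\cdots+c^n)k\le c_\infty k$.
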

\begin{proof}
We already know from Remark \ref{lipstar} that $f\circ w$, being at bounded distance to $f$, is $c'$-LS-Lipschitz' for any $c'\in\mathopen]c,1\mathclose[$. Let $w$ have displacement $\le k$; we can suppose $k\ge c^{-1}C$. Then \[d(f^n(x),(f\circ w)^n(x))\le (c+\dots+c^n)k\] for all $n$.

Proof by induction (we write $fw$ for $f\circ w$): trivially true for $n=0$, and:
\[d(f^{n+1}(x),(fw)^{n+1}(x))\le \max\Big(C,cd\big(f^n(x),w(fw)^n(x)\big)\Big)\]
\[\le \max\Big(C,cd\big(f^n(x),(fw)^n(x)\big)+cd\big((fw)^n(x),w(fw)^n(x)\big)\Big)\]
\[\le \max\big(C,c(c+\dots+c^n)k+ck)\big)=(c+\dots +c^{n+1})k\]

Thus if $c_\infty=\sum_{n\ge 1}c^n=c/(1-c)$, then 
\[d(f^n(x),(fw)^n(x))\le c_\infty k,\quad \forall n\ge 0\] 

Thus $\Lambda_{t+2c_\infty k}^{fw}\le \Lambda_t^f$ and $\Lambda_{t+2c_\infty k}^{f}\le \Lambda_t^{fw}$. Hence $\Lambda_t^f-\Lambda_t^{fw}$ is bounded for large $t$ (say $t\ge C+2c_\infty k$).
\end{proof}

\begin{warn}
In the Euclidean plane $\R^2$, let $f$ be the diagonal matrix $(1/2,1/3)$ and $u$ be the flip of coordinates. Then $\Lambda^{f}-\Lambda^{ufu}$ is unbounded. Thus, conjugating by a bijective quasi-isometry does not preserve the class of $\Lambda^f$ modulo bounded functions. 
\end{warn}

\begin{prop}
Let $f$ be a $b$-LS-similarity of $X$, and $u:X\to Y$, $v:Y\to X$ be inverse quasi-isometries, and define $g=ufv:Y\to Y$. Then $g$ is a $b$-LS-similarity of $Y$.
\end{prop}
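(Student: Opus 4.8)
The plan is to verify, for $g=ufv$, the two ingredients in the definition of a $b$-LS-similarity: that $g$ is a large-scale contraction (so that $\Lambda_t^{g}$ makes sense for large $t$), and that $\Lambda_t^{g}-\log_{b^{-1}}^+ d_Y$ is bounded. Write $f$ as $(c,C)$-Lipschitz' with $c<1$, and fix constants $\lambda\ge 1,\lambda'\ge 0$ such that $u,v$ are quasi-isometries with two-sided coarse-Lipschitz bounds of constants $\lambda,\lambda'$, and with $d_X(vu(x),x)\le k$, $d_Y(uv(y),y)\le k$ for all $x,y$ (enlarging $k$ so that $k\ge c^{-1}C$). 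By Lemma \ref{composels}, $g$ is $c\lambda^2$-LS-Lipschitz'; more usefully $g^m$ is $c^m\lambda^2$-LS-Lipschitz', hence a genuine large-scale contraction for $m$ large, which already forces the $g$-orbits to coarsely converge and so $\Lambda_t^{g}(y,y')$ to be finite for all large $t$. The whole content is then in the estimate on $\Lambda_t^g$.

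The key step is to show that the iterates of $g$ shadow the conjugated iterates $uf^nv$ \emph{uniformly in $n$}. Writing $w=vu$, a direct induction gives the factorisation
\[ g^n \;=\; u\circ (f\circ w)^{\,n-1}\circ f\circ v,\qquad n\ge 1. \]
Since $w$ has displacement $\le k$ and $f$ is a large-scale contraction, the preceding lemma (comparing $(f\circ w)^{n}$ with $f^{n}$) supplies a constant $c_\infty k$, \emph{independent of $n$}, with $d_X\big((f\circ w)^{n-1}(x),f^{\,n-1}(x)\big)\le c_\infty k$ for all $x$. Evaluating at $x=fv(y)$ and applying the Lipschitz' map $u$ produces a single constant $K$ with
\[ d_Y\big(g^n(y),\,u f^n v(y)\big)\le K\qquad\text{for all }n\ge 1,\ y\in Y. \]
This uniform-in-$n$ bound is the heart of the argument, and it is exactly here that $c<1$ is indispensable: the geometric decay of the error produced by each inserted copy of $w=vu$ is what keeps the cumulative error bounded rather than growing with $n$. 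I expect this to be the main obstacle.

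With the shadowing in hand I would transport the similarity estimate for $f$ across the quasi-isometry. Fix $t$ large. Replacing $g^n$ by $uf^nv$ up to $\pm K$ and using the two-sided quasi-isometry inequalities for $u$, one sandwiches $\Lambda_t^{g}(y,y')$ between $\Lambda_{s_+}^{f}(v(y),v(y'))$ and $\Lambda_{s_-}^{f}(v(y),v(y'))$ for suitable thresholds $s_-=(t-2K-\lambda')/\lambda$ and $s_+=\lambda(t+2K+\lambda')$, both $\ge C$ once $t$ is large. Since $f$ is a $b$-LS-similarity, the ``some/any $t\ge C$'' clause gives $\Lambda_{s_\pm}^{f}(x,x')=\log_{b^{-1}}^+ d_X(x,x')+O(1)$, whence
\[ \Lambda_t^{g}(y,y')=\log_{b^{-1}}^+ d_X\big(v(y),v(y')\big)+O(1). \]

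Finally I would eliminate the inner $v$: as $v$ is a quasi-isometry, $d_X(v(y),v(y'))$ and $d_Y(y,y')$ agree up to a multiplicative-plus-additive distortion, and one checks that $s\mapsto\log_{b^{-1}}^+ s$ converts any such distortion into a bounded additive change. Combining, $\Lambda_t^{g}-\log_{b^{-1}}^+ d_Y$ is bounded, which is the desired conclusion. To summarise the difficulty: the uniform shadowing $g^n\approx uf^nv$ is the crux; the coarse invariance of $\log_{b^{-1}}^+$ under quasi-isometric distortion is the routine supporting fact; and the only delicate bookkeeping point is ensuring that $g$ itself qualifies as a large-scale contraction so that $\Lambda_t^{g}$ is the object occurring in Definition \ref{liplip}'s framework.
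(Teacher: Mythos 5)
Your proof is correct in substance, and its core is the same as the paper's: both arguments rest on the factorisation $g^n=u\circ(f\circ w)^{n-1}\circ f\circ v$ (with $w=vu$) together with the uniform-in-$n$ shadowing bound $d\bigl((f\circ w)^{n-1}(x),f^{n-1}(x)\bigr)\le c_\infty k$ extracted from the \emph{proof} of the lemma preceding the proposition, and in both this yields an inequality of the form $\Lambda^{g}_{t}(y,y')\le\Lambda^f_{s}(v(y),v(y'))$ for suitable thresholds. Where you genuinely diverge is the reverse inequality. You apply the lower (expansivity) half of the quasi-isometry inequalities for $u$ to the shadowed orbits, which sandwiches $\Lambda^g_t(y,y')$ between $\Lambda^f_{s_+}(v(y),v(y'))$ and $\Lambda^f_{s_-}(v(y),v(y'))$ and finishes in one stroke, using only that $\log^+_{b^{-1}}$ converts multiplicative-plus-additive distortion into a bounded additive error. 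The paper instead keeps all orbit estimates one-sided: it introduces the auxiliary self-map $fvu=f\circ w$ of $X$, factors $(fvu)^n=fv\circ(ufv)^{n-1}\circ u$ to get $\Lambda^{fvu}_{M'}(x,x')\le\Lambda^{ufv}_t(u(x),u(x'))+1$, then invokes the \emph{statement} of the preceding lemma ($\Lambda^{fvu}_{t'}-\Lambda^f_{t'}$ bounded, since $fvu$ is $f$ precomposed with a bounded-displacement map) and the similarity property of $f$ to transfer the bound from pairs $(u(x),u(x'))$ to all of $Y\times Y$. Your route is shorter and more symmetric; the paper's conjugation trick avoids using the expansivity of $u$ inside the orbit estimates, at the cost of the extra step.

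One shared loose end deserves mention. You rightly flag that Definition \ref{liplip} requires a $b$-LS-similarity to be $(c,C)$-Lipschitz$'$ with $c<1$, and your argument only gives that $g$ is $c\lambda^2$-LS-Lipschitz$'$ (a constant that may exceed $1$) while some power $g^m$ is a genuine large-scale contraction; this does not literally verify the definition for $g$ itself. But the paper's opening claim --- that $g$ is $c'$-LS-Lipschitz$'$ for $c'\in\mathopen]c,1\mathclose[$ ``by Remark \ref{lipstar}'' --- is not justified either: Remark \ref{lipstar} would require $g$ to be at bounded distance from a $c$-LS-Lipschitz$'$ map, which composition with quasi-isometries does not provide. (For instance, take $X=Y=\R$, $f(x)=x/2$, and $u$ an increasing piecewise-linear bijection with slope $\tfrac12$ on intervals $[N,2N]$ and slope $2$ on $[N/2,N]$ for a sequence of scales $N\to\infty$, $v=u^{-1}$: then $g=ufv$ doubles pairs of points at arbitrarily large distance.) So on this point your treatment is, if anything, more careful than the paper's; what both proofs genuinely establish is the substantive content of the proposition, namely that $\Lambda^g_t$ is well defined for all large $t$ and differs from $\log^+_{b^{-1}}d_Y$ by a bounded function.
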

\begin{proof}
For the moment, only assume that $f$ is $(c,C)$-Lipschitz'. Again by Remark \ref{lipstar}, $g$ is $c'$-LS-Lipschitz' for any $c'\in\mathopen]c,1\mathclose[$.

Writing $w=vu$. Letting $k$ be such that $w$ has displacement $\le k$, and let $m,m'$ be such that $d(ux,ux')\le\max(m'd(x,x'),m')$ for all $y,y'\in Y$. Then we have, on the one hand
\[d((ufv)^n(y),(ufv)^n(y'))=d\big(u(fw)^{n-1}(fvy),u(fw)^{n-1}(fvy')\big)\]
\[\le \max\Big(m',md\big((fw)^{n-1}(fvy),(fw)^{n-1}(fvy')\big)\Big);\]
since $d((fw)^{n-1},f^{n-1})\le c_\infty k$ by the previous proof, we have
\[d((ufv)^n(y),(ufv)^n(y'))\le \max\Big(m',2mc_\infty k+md\big(f^nvy,f^nvy')\big)\Big).\]
Now specify this to $n=n_0=\Lambda_C^f(vy,vy')$. Then we obtain 
\[d((ufv)^{n_0}(y),(ufv)^{n_0}(y'))\le M=\max\Big(m',2mc_\infty k+mC\Big).\]
Hence $\Lambda_M^{ufv}(y,y')\le n=\Lambda_C^f(vy,vy')$. 

On the other hand, assuming $d(fv(y),fv(y'))\le\max(m'_1,m_1d(y,y'))$ for all $y,y'$, and letting again $n$ be arbitrary,

\[d((fvu)^n(x),(fvu)^n(x'))=d\big(fv(ufv)^{n-1}u(x),fv(ufv)^{n-1}u(x')\big)\]
\[\le \max\Big(m'_1,m_1d\big((ufv)^{n-1}u(x),(ufv)^{n-1}u(x')\big)\Big) \]

Fix $t$ (large enough), and set $n=\Lambda^{ufv}_t(u(x),u(x'))+1$. Then 
\[d((fvu)^n(x),(fvu)^n(x'))\le M'=\max\Big(m'_1,m_1t\Big).\]
Hence $\Lambda^{fvu}_{M'}(x,x')\le n=\Lambda^{ufv}_t(u(x),u(x'))+1$

If $t'\ge M'$ is large enough so that $\Lambda_{t'}^{fvu}-\Lambda_{t'}^f$ is bounded, then we deduce that $\Lambda_{t'}^{f}-\Lambda_{t}^{ufv}\circ (u\times u)$ is upper bounded, as well as $\Lambda_M^{ufv}-\Lambda_C^f\circ (v\times v)$.

If we assume that $f$ is a $b$-LS-similarity, then $\Lambda^f_C-\Lambda^f_{C}\circ (v\times v)$ is bounded. Hence $\Lambda_M^{ufv}-\Lambda_C^f$ is upper bounded. Also 
$\Lambda_{t'}^{f}-\Lambda_{t'}^{f}\circ (u\times u)$ is bounded, whence $(\Lambda_{t'}^{f}-\Lambda_{t}^{ufv})\circ (u\times u)$ is upper bounded, and hence $(\Lambda_{t'}^{f}-\Lambda_{t}^{ufv})$ is upper bounded as well. Hence $(\Lambda_{t'}^{f}-\Lambda_{t}^{ufv})$ is bounded. So $ufv$ is a $b$-LS-similarity.
\end{proof}

Note that if $f$ is an LS-similarity, then it is coarsely proper. In particular, if $f$ is an essentially surjective LS-similarity and $X$ is large-scale geodesic, then $f$ is a self-quasi-isometry.

\begin{defn}
We say that $X$ is large-scale homothetic (LSH) if it admits, for some $b\in\mathopen]0,1\mathclose[$, a $b$-LS-similarity that is a self-quasi-isometry.
\end{defn}

\begin{cor}\label{lsh_qi}
Being large-scale homothetic is a quasi-isometry invariant.\qed
\end{cor}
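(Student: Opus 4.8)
The plan is to deduce the corollary directly from the preceding proposition, the only extra ingredient being that the composition of quasi-isometries is again a quasi-isometry.

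Unwinding the definition, QI-invariance means: if $X$ is large-scale homothetic and $Y$ is quasi-isometric to $X$, then $Y$ is large-scale homothetic. I would therefore fix a $b$-LS-similarity $f:X\to X$ that is a self-quasi-isometry (this is precisely what witnesses that $X$ is LSH), together with a pair of inverse quasi-isometries $u:X\to Y$ and $v:Y\to X$, so that $u\circ v$ and $v\circ u$ are at bounded distance from the respective identity maps. I then set $g=u\circ f\circ v:Y\to Y$.

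Next I would invoke the preceding proposition, which asserts precisely that $g$ is a $b$-LS-similarity of $Y$. It then remains only to check that $g$ is a self-quasi-isometry of $Y$; but $g$ is a composition of three quasi-isometries ($v$, then $f$, then $u$), hence is itself a quasi-isometry. Thus $Y$ admits a $b$-LS-similarity that is a self-quasi-isometry, i.e.\ $Y$ is LSH. Since being quasi-isometric is a symmetric relation, this shows that LSH is a QI-invariant.

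There is no genuine obstacle here: the entire analytic content—that conjugating a $b$-LS-similarity by inverse quasi-isometries again yields a $b$-LS-similarity, with the \emph{same} parameter $b$—has already been absorbed into the preceding proposition. The only point worth a word is the self-quasi-isometry clause in the definition of LSH, which is not automatic for a general LS-similarity but is preserved here simply because $g$ is manufactured as a composition of quasi-isometries.
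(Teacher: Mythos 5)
Your proposal is correct and is exactly the argument the paper intends: the corollary is stated with a \qed because it follows immediately from the preceding proposition (conjugating the $b$-LS-similarity by the inverse quasi-isometries $u,v$), plus the observation that $g=u\circ f\circ v$, being a composition of quasi-isometries, is itself a self-quasi-isometry of $Y$. Nothing is missing.
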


Note that this implies that $X$ is LS-contractable. An easy instance of a space that is LS-contractable but not LSH is the set of $2^{2^n}$ when $n$ ranges over positive integers.

If a simply connected nilpotent Lie group is Carnot, then it is LSH, since for a good choice of metric equivalent to the word length (Carnot-Caratheodory), it admits non-isometric self-similarities.

\begin{que}\label{q_lshcarnot}
Conversely, if a simply connected nilpotent Lie group is LSH, is it Carnot?
\end{que}

% ----------------------------------------------------------------
\end{document}